\newfont{\sheaf}{eusm10 scaled\magstep1}
\newtheorem{thm}{Theorem}[section]
\newtheorem{cor}[thm]{Corollary}
\newtheorem{lemma}[thm]{Lemma}
\newtheorem{prop}[thm]{Proposition}
\newtheorem{defn}[thm]{Definition}
\newtheorem{claim}[thm]{Claim}
\theoremstyle{definition}
\newtheorem{remark}[thm]{Remark}
 \newtheorem{notation}[thm]{Notation}
 \newtheorem{example}[thm]{Example}
\DeclareMathOperator{\Aut}{Aut}
\DeclareMathOperator{\Pic}{Pic}
\DeclareMathOperator{\Jac}{Jac}
\def\min{\operatorname{min}}
\def\max{\operatorname{max}}
\def\c1{\operatorname{c_1}}
\def\c2{\operatorname{c_2}}
\def\Sym{\operatorname{Sym}}
\def\s{\mathfrak{s}}
\def\f{\mathfrak{f}}
\def\e{\mathbf{e}}
\def\c{\mathfrak{P}}
\def\ZZ{{\mathbb Z}}
\def\QQ{{\mathbb Q}}
\def\PP{{\mathbb P}}
\def\DD{{\mathbb D}}
\def\GG{{\mathbb G}}
\def\G{{\mathcal G}}
\def\M{{\mathcal M}}
\def\O{{\mathcal O}}
\def\T{{\mathcal T}}
\def\H{{\mathcal H}}
\def\F{{\mathcal F}}
\def\K{{\mathcal K}}
\def\V{{\mathcal V}}
\def\C{{\mathcal C}} 
\def\K{{\mathcal K}}
\def\X{{\mathcal X}}
\def\x{\times}                   
\def\cong{\simeq}
\def\+{\oplus}                   
\def\*{\otimes}                  
\def\Aut{\operatorname{Aut}}
\def\id{\operatorname{id}}
\def\Pic{\operatorname{Pic}}
\def\bP{{\mathbb P}}
\begin{document}

\title[Brill-Noether loci of pencils with prescribed ramification]{Brill-Noether loci of pencils with prescribed ramification on moduli of curves and on Severi varieties on $K3$ surfaces}

\author[A.~L.~Knutsen]{Andreas Leopold Knutsen}
\address{A.~L.~Knutsen, Department of Mathematics, University of Bergen,
Postboks 7800,
5020 Bergen, Norway}
\email{andreas.knutsen@math.uib.no}

\author[S.~Torelli]{Sara Torelli}
\address{S.~Torelli,  Dipartimento di Matematica, Politecnico di Milano, Piazza Leonardo da Vinci 12, 20133 Milano, Italy}
\email{sara.torelli7@gmail.com}

\begin{abstract}
  Under the assumption that the {\it adjusted Brill-Noether number} $\widetilde{\rho}$ is at least $-g$, we prove that the Brill-Noether loci in $\M_{g,n}$ of pointed curves carrying pencils with prescribed ramification at the marked points have a component of the expected codimension with pointed curves having Brill-Noether varieties of pencils of the minimal dimension. As an application, the map from the Hurwitz scheme to $\M_g$ is dominant if $n+\widetilde{\rho} \geq 0$ and generically finite otherwise, settling a variation of a classical problem of Zariski.

  In the second part of the paper,
  we study the analogous loci of curves in Severi varieties on $K3$ surfaces, proving existence of curves with non-general behaviour from the point of view of Brill-Noether theory. This extends previous results of Ciliberto and the first named author to the ramified case. We apply these results to study correspondences and cycles on $K3$ surfaces in relation to Beauville-Voisin points and constant cycle curves.
\end{abstract}

\keywords{Brill-Noether theory, prescribed ramification, Hurwitz schemes, degenerations of curves, $K3$ surfaces}

\subjclass[2020]{14C25, 14H10, 14H51, 14J28}
\maketitle

\section{Introduction}

The theory of linear series on projective curves with prescribed ramification has been intensively studied since the appearance in the eighties of the groundbreaking results of Eisenbud and Harris \cite{EH} see, e.g., \cite{CPS,COP,CMPT,FP,Fa,FL,FT1,FT2,JPPZ17,Os2,Os,Tev,Te23}. There is an {\it adjusted Brill-Noether number} $\widetilde{\rho}$ (see \eqref{eq:ABN}) generalizing the classical Brill-Noether number $\rho(g,r,d)$, which in addition to $g,r,d$ also depends on the ramification profile. In \cite[Thm. 4.5 and its Rmk.]{EH} it is proved that a general pointed genus $g$ curve carries a $g^r_d$ with prescribed ramifications at the marked points only if $\widetilde{\rho}\geq 0$. In contrast to the unramified case, the condition becomes sufficient provided that an intersection number of Schubert classes is nonzero, and, if so,  the Brill-Noether loci have dimension $\widetilde{\rho}$. For pencils the conditions hold true by Osserman's results \cite[Thms. 2.4 and 2.6]{Os}. 

For negative $\widetilde{\rho}$ comparatively little is known about the loci of curves in the moduli space of $n$-pointed curves $\M_{g,n}$ carrying linear series with prescribed ramification, for instance their dimensions, and the dimension of the Brill-Noether varieties of the general members, even in the case of pencils. By contrast, in the classical case without ramification with $\rho<0$, it is known that the locus of curves in $\M_g$ carrying a $g^1_d$  is irreducible of codimension $-\rho$ and that its general element carries finitely many $g^1_d$s.

The first main result of this paper generalizes the classical results on Brill-Noether theory of {\it pencils} to the case of prescribed ramification, under the condition $\widetilde{\rho} \geq -g$. In Theorem \ref{thm:main_mod}, we prove that the moduli space of $n$-pointed curves with a $g^1_k$ with prescribed ramification has codimension $\min\{0,-\widetilde{\rho}\}$ in $\M_{g,n}$, and that its general member carries a
$\max\{0,\widetilde{\rho}\}$-dimensional family of such pencils.
When $\widetilde{\rho} \geq 0$, our result provides an alternative proof of the  aforementioned existence results of  Eisenbud-Harris \cite[Thm. 4.5 and its Rmk.]{EH} and Osserman \cite[Thms. 2.4 and 2.6]{Os}, not relying on Schubert calculus. Most importantly, our result establishes the case of negative adjusted Brill-Noether number $\widetilde{\rho}$. 

As a significant application, under the condition $\widetilde{\rho} \geq -g$, in Theorem \ref{thm:mainHur} we  prove that the natural forgetful map from the Hurwitz space of covers with prescribed ramifications to $\M_g$ is dominant precisely when $n+\widetilde{\rho}\geq 0$. This answers a variation of the classical conjecture of Zariski \cite{Zar} expecting the map from the Hurwitz space with prescribed monodromies to $\mathcal{M}_g$ to be dominant as soon as the dimension of the target is smaller and the covers are non-composed. Such a variation has already been intensively studied in the past years and partial results can be found in some of the above mentioned papers.

To prove the above results we use degenerations to nodal rational curves, performed in \S \ref{sec:proofs_K3free}. A refined version of this degeneration, performed in \S \ref{sec:rew}-\ref{sec:proofs}, allows us to exhibit curves having the desired properties as normalizations of nodal curves on $K3$ surfaces. In fact, in Theorem \ref{thm:main} we find loci of {\it nodal} curves on $K3$ surfaces whose normalizations carry a family of pencils with desired prescribed ramification of the expected dimension. We believe this result is of independent interest. For smooth curves, this extends to the ramified case Lazarsfeld's famous result stating that smooth curves on general K3 surfaces are Brill-Noether general. For nodal curves, this extends the results without ramification in \cite{CK} providing non-Brill-Noether general behavior.

The last results of the paper, proved in \S \ref{sec:Zk}, concern correspondences and cycles on K3 surfaces, and their relation to tautological points in $\M_{g,2}$. Indeed, as an application of Theorem \ref{thm:main}, we solve in full generality the problem of nonemptiness of   
components in the fiber over $0$ of the difference map $X \times X\to CH_0(X)$ studied by Mumford \cite{Mum}, obtained as $n$-torsion points in the Jacobian of curves in $X$, for a very general K3 surface $X$ (see Theorem \ref{thm:main_cycle}). We furthermore analyze these components with respect to Beauville-Voisin points and constant cycle curves (see Corollary \ref{cor:BV}). In view of Pandharipande and Schmitt's result \cite{PS19} relating Beauville-Voisin points and tautological points, we construct cycles in $\M_{g,2}$ with a dense set of tautological points (see Corollary \ref{cor:corBV}). 

In the next section we will present the main results of this paper more precisely, and explain all necessary notation.

\vspace{0.2cm} {\it Acknowledgments.} This collaboration started during the
conference {\it Algebraic Geometry in L'Aquila} in July 2023, funded by PRIN 2017 - 2017SSNZAW: "Moduli Theory and Birational Classification". We warmly thank the organizers for the wonderful conference. 
A.L.K. was partially supported 
	by the Trond Mohn Foundation's project ``Pure Mathematics in
	Norway''. S.T. was partially supported by PRIN 2022 "Moduli spaces and Birational Geometry" and by INdAM-GNSAGA. To conclude, we thank the referee for the careful reading of the paper and useful suggestions.

\section{Setting and main results of the paper}

In this section we will introduce notation  and present  the main results of the paper. We will divide the presentation in three parts:
\begin{itemize}
\item results concerning Brill-Noether theory of pencils with prescribed ramification on pointed curves (\S \ref{sec:intro_mod}),
\item results concerning Brill-Noether theory of pencils with prescribed ramification on normalizations of pointed nodal curves on $K3$ surfaces (\S \ref{sec:intro_K3}),
  \item applications to cycles on $K3$ surfaces and tautological points in the moduli space of $2$-pointed curves (\S \ref{sec:intro_tor}).
\end{itemize}

Throughout the paper, $g,k,n,e_1,\ldots,e_n$ will be integers satisfying
\begin{equation}
  \label{eq:setting1}
  g \geq 1, \; k \geq 2, \; 2 \leq e_i \leq k \; \;\mbox{for all} \;\; i \in \{1,\ldots,n\},
\end{equation}
and we define
\begin{equation}
  \label{eq:setting2}
  \e:=(e_1,\ldots,e_n) \;\; \mbox{and} \;\; e:=\sum e_i.
\end{equation}

 \subsection{Moduli spaces of pointed curves with prescribed ramification} \label{sec:intro_mod}

 Let $ \M_{g,n}$ denote the moduli space of smooth $n$-pointed genus $g$ curves. For  $(C,x_1,\ldots, x_n) \in \M_{g,n}$ and  $\e=(e_1,\ldots,e_n)$ we let
\[ G^1_k(C,(x_1,e_1),\ldots,(x_n,e_n))\]
be the variety parametrizing $g^1_k$s on $C$ with ramification order $e_i$ at $x_i$. 
Note that
\[
e \leq 2(k-1+g)+n
 \]
 (recall \eqref{eq:setting2}) by Riemann-Hurwitz.
 In the classical (unramified) setting, $G^1_k(C)$ denotes the variety parametrizing $g^1_k$s on $C$. For a general curve $C$, a necessary and sufficient condition for $G^1_k(C)\neq \emptyset$ is $\rho(g,1,k)\geq 0$, where $\rho$ is the Brill-Noether number. 

Let $\M_{g,k,\e} \subset \M_{g,n}$ be the locus of $n$-pointed curves $(C,x_1,\ldots,x_n)$ such that \linebreak $G^1_k(C,(x_1,e_1),\ldots,(x_n,e_n)) \neq \emptyset$.  A necessary and sufficient condition for such nonemptiness on a general $n$-pointed curve is that $\widetilde{\rho}=\widetilde{\rho}(g,1,k;e_1,\ldots,e_n)\geq 0$, where
\begin{equation} \label{eq:ABN}
  \widetilde{\rho}=\widetilde{\rho}(g,1,k;e_1,\ldots,e_n):=\rho(g,1,k)-\sum_{i=1}^n(e_i-1)=2k-2-g-e+n
\end{equation}
is the {\it adjusted Brill-Noether number}. In other words, $\M_{g,k,\e} =\M_{g,n}$ {\it if and only if} $\widetilde{\rho}=\widetilde{\rho}(g,1,k;e_1,\ldots,e_n)\geq 0$. In contrast to the classical case (with no ramification), the "if statement" relies on the computation of a nonzero intersection number of Schubert classes in the Chow ring of the Grassmannian $\GG(d -2, d)$ provided by Ossermann, cf., \cite[Thm. 5.42]{HM} and \cite[Thms. 2.4 and 2.6]{Os}. 

We denote by $\G^1_{g,k,\e}$ the variety parametrizing pairs 
$\left((C,x_1,\ldots,x_n),\mathfrak{g}\right)$ such that $(C,x_1,\ldots,x_n) \in \M_{g,k,\e}$ and $\mathfrak{g} \in G^1_k(C,(x_1,e_1),\ldots,(x_n,e_n))$ and by
\begin{equation}\label{eq:kappa}
\kappa_{g,k,\e}:\G^1_{g,k,\e} \longrightarrow \M_{g,k,\e} 
\end{equation}
the forgetful map, with fiber over $(C,x_1,\ldots,x_n)$ being $G^1_k(C,(x_1,e_1),\ldots,(x_n,e_n))$. This is related to the Hurwitz moduli space
\[ \H_{g,k,\e}:=\H_{g,k}(e_1,\ldots,e_n,2^r) \]
parametrizing degree $k$ covers
$X \to \PP^1$ with $X$ a smooth projective curve of genus $g$ with points of ramification $e_i$ over $n$ ordered points of the target, and simple ramification over $r:=2(k-1+g)+n-e$ ordered points of the target completing the ramification profile\footnote{Recall that two covers
$f:X \to \PP^1$ and $f':X' \to \PP^1$ are isomorphic if there exists a commutative diagram
\[ \xymatrix{ X \ar[r]^{\simeq} \ar[d]_f & X' \ar[d]^{f'} \\
              \PP^1 \ar[r]^{\simeq}  &  \PP^1 
  }
  \]
respecting all the markings.}, cf., e.g., \cite{HMu,ACV,Mo,FP}. We have a natural finite\footnote{Non-injectivity is due to the possible presence of automorphisms of pointed curves not descending to the base $\PP^1$ of the $g^1_k$.}  forgetful map dominating each component\footnote{The image of $\lambda_{g,k,\e}$ is the open dense sublocus of $\G^1_{g,k,\e}$ consisting of pairs $\Big((X,P_1,\ldots,P_n),\mathfrak{g}\Big)$ such that $P_1,\ldots,P_n$ lie in distinct members of $\mathfrak{g}$ and the ramification is otherwise simple and in
$2(k-1+g)+n-e$
other members of $\mathfrak{g}$. The density follows from a generalization of the theory of Harris and Mumford on admissible covers \cite{HMu} (see also \cite{FP,ACV,CMR}).} 
\begin{equation} \label{eq:lambda}
  \lambda_{g,k,\e}: \H_{g,k,\e} \longrightarrow \G^1_{g,k,\e}.
  \end{equation}
By 
Riemann's Existence Theorem\footnote{Indeed,  the dimension of each component of $\H_{g,k,\e}$ equals the number of marked points, plus the number of free branch points minus $\dim \Aut \PP^1$, which equals
\begin{eqnarray*}
  n+\left(2g-2+2k-\sum_{i=1}^n(e_i-1)\right)-3=n+(2g-2+2k-e+n)-3\\
  =2g-5+2k+2n-e
  =3g-3+n+\widetilde{\rho}.
\end{eqnarray*}} (cf., e.g., \cite[III, Cor. 4.10]{Mi}),
\begin{equation}
  \label{eq:dimG1k}
  \H_{g,k,\e} \;\; \mbox{and} \;\; \G^1_{g,k,\e} \;\; \mbox{are equidimensional of dimension} \;\; (3g-3+n)+\widetilde{\rho}, \mbox{if nonempty}. 
\end{equation}
This, along with the classical result \cite[Thm. 4.5]{EH} of Eisenbud and Harris when $\widetilde{\rho} \geq 0$, implies that all components of $G^1_k(C,(x_1,e_1),\ldots,(x_n,e_n))$ have dimension at least $ \max\{0,\widetilde{\rho}\}$, and if equality holds for some $(C,x_1,\ldots,x_n)$, then
$\M_{g,k,\e}$ has a component of
codimension
\[ \max\{0,-\widetilde{\rho}\}\]
in $\M_{g,n}$. 
We call this 
the {\it expected codimension} of $\M_{g,k,\e}$. 
One of the main results of this paper proves the existence of a component of the expected codimension under the additional condition
  $\widetilde{\rho} \geq -g$:

\begin{thm} \label{thm:main_mod}
  In the setting \eqref{eq:setting1}-\eqref{eq:setting2}, assume that $\widetilde{\rho}(g,1,k;\e) \geq -g$. Then 
  $\M_{g,k,\e}$ has a component of the expected codimension $\max\{0,-\widetilde{\rho}\}$ in $\M_{g,n}$
  whose general member $(C,x_1,\ldots,x_n)$ satisfies
\[ \dim G^1_k(C,(x_1,e_1),\ldots,(x_n,e_n))=\max\{0,\widetilde{\rho}\}.\]
\end{thm}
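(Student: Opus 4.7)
The plan is to degenerate to an irreducible rational $g$-nodal curve $R_0$ and combine the theory of admissible covers with the equidimensionality of the Hurwitz space \eqref{eq:dimG1k} in order to transport an existence statement on $R_0$ back to the open moduli space $\M_{g,n}$. Concretely, let the normalization $\nu:\PP^1\to R_0$ identify $g$ pairs of distinct points $\{p_i,q_i\}_{i=1}^g\subset \PP^1$, and mark $n$ smooth points $x_1,\ldots,x_n\in R_0\setminus \Sing(R_0)$. By the theory of admissible covers (\cite{HMu,ACV}), degree-$k$ admissible covers of $(R_0,x_1,\ldots,x_n)$ with ramification $e_j$ at $x_j$ and simple nodes over the nodes of $R_0$ correspond to degree-$k$ morphisms $\phi:\PP^1\to\PP^1$ with ramification $e_j$ at $x_j$ and satisfying $\phi(p_i)=\phi(q_i)$ for all $i$.

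Step 1 (Dimension count on $\PP^1$). Fix general $x_1,\ldots,x_n\in\PP^1$. Osserman's theorem \cite[Thms.~2.4 and 2.6]{Os} applied on $\PP^1$ (genus $0$) shows that the variety $V$ of pencils $\phi$ on $\PP^1$ with ramification $e_j$ at $x_j$ is nonempty of pure dimension $2(k-1)-\sum(e_j-1)=\widetilde\rho+g$, which is non-negative precisely by the hypothesis $\widetilde\rho\geq -g$. Form the incidence variety
\[
\I=\{(\phi,p_1,q_1,\ldots,p_g,q_g)\in V\times (\PP^1\times\PP^1)^g\,:\,\phi(p_i)=\phi(q_i),\;i=1,\ldots,g\}.
\]
For every $\phi\in V$, $\{(p,q):\phi(p)=\phi(q)\}\subset\PP^1\times\PP^1$ is a curve of bidegree $(k,k)$, so the first projection $\I\to V$ has purely $g$-dimensional fibers, and $\dim\I=\widetilde\rho+2g$. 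The crucial technical point is to show that the second projection $\pi:\I\to(\PP^1\times\PP^1)^g$ has some component whose image has codimension $\max\{0,-\widetilde\rho\}$ and whose general fiber has dimension $\max\{0,\widetilde\rho\}$; this amounts to producing a single configuration $(\phi_0,\{p_i^0,q_i^0\})$ at which the differential of $\pi$ has the expected maximal rank $\min\{\widetilde\rho+2g,2g\}$.

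Step 2 (Moduli interpretation and smoothing). The locus $\Delta_g^\circ\subset\overline{\M_{g,n}}$ of $n$-pointed irreducible $g$-nodal rational curves has dimension $2g-3+n$, and Step 1 produces a sublocus of dimension $2g-3+n-\max\{0,-\widetilde\rho\}$ admitting the desired admissible covers, with general Brill-Noether variety of pencils of dimension $\max\{0,\widetilde\rho\}$. The preimage in $\overline{\H_{g,k,\e}}$ thus has dimension $2g-3+n+\widetilde\rho=3g-3+n+\widetilde\rho-g$. Since the $g$ nodes of $R_0$ furnish $g$ independent smoothing directions in the admissible-cover space preserving the ramification data (standard, \cite{HMu}), this preimage lies in the closure of a $(3g-3+n+\widetilde\rho)$-dimensional open subset of $\H_{g,k,\e}$, which by equidimensionality \eqref{eq:dimG1k} is a full component $\H^\circ$. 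The image of $\H^\circ$ in $\M_{g,n}$ under the natural forgetful map is then a component of $\M_{g,k,\e}$: its codimension is at most $\max\{0,-\widetilde\rho\}$ by upper semicontinuity of fiber dimension combined with Step 1, and at least $\max\{0,-\widetilde\rho\}$ by the lower bound $\dim G^1_k\geq\max\{0,\widetilde\rho\}$ recalled after \eqref{eq:dimG1k}.

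The hard part is the transversality assertion in Step 1: one needs a concrete $\phi_0\in V$ and nodes $\{p_i^0,q_i^0\}$ at which the differential of $\pi$ achieves maximal rank. A natural route is to pick $\phi_0$ whose ramification at the $x_j$ is extremal (making $V$ smooth at $\phi_0$ and the node conditions $\phi(p_i)=\phi(q_i)$ linear in local coordinates on $V$), and then to verify that the $g$ node constraints impose independent conditions for a generic choice of $\{p_i^0,q_i^0\}$ in the $g$-dimensional fiber of $\I\to V$ over $\phi_0$. Once this is in hand, the smoothability of admissible covers of irreducible nodal curves with simple nodes over the nodes of the base is classical \cite{HMu}, so the whole proof reduces to that one explicit computation on $\PP^1$.
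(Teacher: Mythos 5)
Your overall strategy coincides with the paper's: degenerate to an $n$-pointed irreducible $g$-nodal rational curve, translate the existence of the desired pencils into a statement about degree-$k$ maps $\PP^1\to\PP^1$ with ramification $e_j$ at the $x_j$ and with $\phi(p_i)=\phi(q_i)$ at the $g$ pairs of glued points, and then conclude by admissible covers and semicontinuity via \eqref{eq:hj2} and \eqref{eq:dimG1k}. Your Step 2 and the dimension bookkeeping are fine. The problem is that you have not proved the statement: you explicitly reduce everything to the transversality claim that the projection $\pi:\I\to(\PP^1\times\PP^1)^g$ has a component on which the differential attains maximal rank (equivalently, that the $g$ node conditions cut down the $(\widetilde\rho+g)$-dimensional family $V$ by exactly $\min\{g,\widetilde\rho+g\}$), and then you defer this to ``one explicit computation'' that is never carried out. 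The sketched route --- choose $\phi_0$ with extremal ramification so that the node conditions become ``linear in local coordinates on $V$'' --- is not an argument: linearity of the conditions says nothing about their independence, and a priori all fibers of $\pi$ could be larger than expected. Since the entire expected-codimension statement of Theorem \ref{thm:main_mod} rests on this point, the gap is essential.

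The paper closes exactly this gap by a change of viewpoint that makes the transversality automatic. A $g^1_k$ $\mathfrak g$ on $\PP^1$ is identified with the degree $k-1$ plane curve $C_{\mathfrak g}=\{W\in\Sym^2(\PP^1)\mid \mathfrak g(-W)\geq 0\}$ in $\Sym^2(\PP^1)\cong\PP^2$ (Definition \ref{def:Fk}), so the condition $\phi(p_i)=\phi(q_i)$ becomes the condition that $C_{\mathfrak g}$ passes through the point $\xi_i=p_i+q_i\in\PP^2$. Your family $V$ becomes a family $\F_{k,(P_1,\ldots,P_n),\e}$ of plane curves of dimension $\widetilde\rho+g$ by \eqref{eq:dimFkn} (the paper even reproves the genus-$0$ input, your appeal to Osserman, by an elementary induction on permutation cycles in Lemma \ref{lemma:P1suP1}, to avoid Schubert calculus). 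Now the fact that $g$ (resp.\ $g'=\widetilde\rho+g$ when $\widetilde\rho<0$) general points of $\PP^2$ impose independent conditions on a positive-dimensional family of plane curves is immediate --- each general point drops the dimension by exactly one as long as the family remains positive-dimensional --- and in the case $\widetilde\rho<0$ the remaining $g-g'$ nodes are then placed \emph{on} one of the finitely many surviving curves rather than at general points. If you want to salvage your write-up, you should either import this $\Sym^2(\PP^1)$ identification or supply an actual proof that your map $\pi$ is generically of maximal rank; as it stands, the crucial step is asserted, not proved.
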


\begin{remark} \label{rem:main_mod}
  Since the general member in any component of $\G^1_{g,k,\e}$ is base point free,  has simple ramification elsewhere and all ramification points lie in distinct fibers, the same properties are true for the general member in any component of \linebreak $G^1_k(C,(x_1,e_1),\ldots,(x_n,e_n))$ as in the theorem.
\end{remark}

 As a consequence, we also solve the classical problem regarding the dominance or generic finiteness of the forgetful map from the Hurwitz scheme $\H_{g,k,\e}$ to $\M_g$:
\[ \xymatrix{
    \H_{g,k,\e} \ar[r]^{\lambda_{g,k,\e}} & \G^1_{g,k,\e} \ar[r]^{\kappa_{g,k,\e}} & \M_{g,k,\e} \ar[r]^{\phi_{g,k,\e}} & \M_{g}
  }\] 
(where $\lambda_{g,k,\e}$ and $\kappa_{g,k,\e}$ are as in \eqref{eq:lambda} and \eqref{eq:kappa}, respectively, and $\phi_{g,k,\e}$ is the forgetful map). 
Since $\dim \H_{g,k,\e}= \dim \G^1_{g,k,\e} =\dim \M_g+(n+\widetilde{\rho})$ one may expect, as best case scenario, this map to be dominant if $n+\widetilde{\rho} \geq 0$ and generically finite (on some component) if $n+\widetilde{\rho} < 0$, so that, in the latter case, the codimension of its image is $-(n+\widetilde{\rho})$. We prove that, under the assumption $\widetilde{\rho} \geq -g$, this best case scenario actually happens:

\begin{thm} \label{thm:mainHur}
  In the setting \eqref{eq:setting1}-\eqref{eq:setting2}, assume that $\widetilde{\rho}(g,1,k;\e) \geq -g$.  Then the forgeful map
  $\H_{g,k,\e} \to \M_g$ is dominant if $n+\widetilde{\rho} \geq 0$ and
  generically finite on at least one component if $n+\widetilde{\rho} < 0$.
  \end{thm}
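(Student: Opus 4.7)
The plan is to build on Theorem \ref{thm:main_mod}. That result, under the hypothesis $\widetilde{\rho}\geq -g$, supplies a component $V\subseteq\M_{g,k,\e}$ of dimension $3g-3+n+\min\{0,\widetilde{\rho}\}$ whose general member $(C,x_1,\dots,x_n)$ has $\dim G^1_k(C,(x_1,e_1),\dots,(x_n,e_n))=\max\{0,\widetilde{\rho}\}$. Combining this with \eqref{eq:dimG1k} and the finiteness of $\lambda_{g,k,\e}$ I can extract a component $\H_0\subseteq\H_{g,k,\e}$ of dimension $D:=3g-3+n+\widetilde{\rho}$ dominating $V$ through $\kappa_{g,k,\e}\circ\lambda_{g,k,\e}$. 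Writing $\Phi:=\phi_{g,k,\e}\circ\kappa_{g,k,\e}\circ\lambda_{g,k,\e}\colon\H_0\to\M_g$, the trivial upper bound $\dim\Phi(\H_0)\leq\min\{D,3g-3\}=3g-3+\min\{0,n+\widetilde{\rho}\}$ reduces the entire theorem to producing a point of $\H_0$ at which the differential of $\Phi$ has the maximal possible rank, i.e.\ to establishing the matching lower bound on $\dim\Phi(\H_0)$.

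The case $\widetilde{\rho}\geq 0$ is free: then $V=\M_{g,n}$ and $\phi_{g,k,\e}$ is the tautological forgetful map, so $\Phi$ is surjective and $n+\widetilde{\rho}\geq 0$ is automatic. So I may assume $\widetilde{\rho}<0$; the generic fiber of $\kappa_{g,k,\e}$ over $V$ is then $0$-dimensional, so $\Phi$ has the desired property iff the forgetful map $\phi_{g,k,\e}|_V\colon V\to\M_g$ does. When moreover $n+\widetilde{\rho}\geq 0$, I would run a standard Brill--Noether incidence count: on a Brill--Noether-general $C_0\in\M_g$ the variety
\[ \{(\mathfrak{g},x_1,\dots,x_n)\in G^1_k(C_0)\times C_0^n\,:\,\text{ram}_{x_i}(\mathfrak{g})\geq e_i,\ i=1,\dots,n\} \]
has expected dimension $\rho(g,1,k)+n-\sum(e_i-1)=n+\widetilde{\rho}$, and projecting to $C_0^n$ bounds the fiber of $\phi_{g,k,\e}|_V$ over $[C_0]$ by $n+\widetilde{\rho}$. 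Provided the image of $V$ meets the BN-general locus---which I expect to follow from the explicit smoothing of a nodal chain of rational curves performed in \S\ref{sec:proofs_K3free} in the proof of Theorem \ref{thm:main_mod}, as generic such smoothings are BN-general---upper semicontinuity of fiber dimension forces $\dim\phi_{g,k,\e}(V)\geq D-(n+\widetilde{\rho})=3g-3$, giving dominance.

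The main obstacle is the remaining case $n+\widetilde{\rho}<0$: the incidence above then has negative expected dimension, so $\phi_{g,k,\e}(V)$ is forced into the Brill--Noether-special locus of $\M_g$ and the shortcut above breaks down. To establish generic finiteness of $\phi_{g,k,\e}|_V$ I would instead exploit directly the nodal degeneration used in \S\ref{sec:proofs_K3free}: at a nodal-chain limit $(C_0,x_1^0,\dots,x_n^0)$ of $V$, the prescribed ramification data rigidifies the marked points on each $\PP^1$-component, so the scheme-theoretic fiber of $V\to\M_g$ at this point is zero-dimensional; upper semicontinuity then propagates this to a general $[C]\in\phi_{g,k,\e}(V)$, yielding $\dim\phi_{g,k,\e}(V)=\dim V=D$ and hence generic finiteness of $\Phi$. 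Verifying this rigidity on the degenerate chain---essentially a tangent-space calculation matching the one already carried out for Theorem \ref{thm:main_mod}---is the technical heart of the argument, and is precisely where the assumption $\widetilde{\rho}\geq -g$ will be used, to guarantee that the chain has enough rational components to accommodate the prescribed ramifications in an unobstructed way.
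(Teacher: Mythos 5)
Your reduction to a lower bound on $\dim\Phi(\H_0)$ and your treatment of $\widetilde\rho\geq 0$ are fine, but the two remaining cases both have gaps. In the case $\widetilde\rho<0\leq n+\widetilde\rho$ your route (an incidence count over a Brill--Noether--general smooth curve) differs from the paper's and rests on two unproved inputs. First, the assertion that the incidence variety $\{(\mathfrak{g},x_1,\dots,x_n)\}$ has dimension equal to its \emph{expected} dimension $n+\widetilde\rho$ is precisely a ``Brill--Noether with ramification at unassigned points'' statement (cf.\ \cite{Fa}); it is known for a \emph{general} curve, but invoking it for a general curve in $\phi_{g,k,\e}(V)$ is circular, and it is not clear that BN-generality in the usual sense (all $G^r_d$ of expected dimension) suffices. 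Second, and more seriously, the claim that $\phi_{g,k,\e}(V)$ meets the BN-general locus is not established: the smoothings of the boundary curve that stay inside $V$ are constrained to carry the special pencil, and in the construction of Theorem \ref{thm:main_mod} for $\widetilde\rho<0$ only $g'=\widetilde\rho+g$ of the $g$ nodes are in general position (the remaining $g-g'$ lie on the finitely many curves $C_{\mathfrak g}$), so ``generic smoothings are BN-general'' does not apply to the smoothings you actually have. A priori $\phi_{g,k,\e}(V)$, which you only control up to dimension $3g-3+\widetilde\rho$, could lie entirely in the BN-special divisorial locus, and then your fiber bound is vacuous.

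In the case $n+\widetilde\rho<0$ you have identified the right degeneration but deferred exactly the step that constitutes the proof. Note also that the fiber of $\overline{\H_{g,k,\e}}\to\overline{\M_g}$ over a $g$-nodal rational curve $X$ is the space of descending $g^1_k$'s on $\PP^1$ with ramification profile $\e$ at \emph{some} $n$ varying points, so the relevant object is not a rigidification of fixed marked points but the family $\F_{k,\e}$ of \eqref{eq:dimFk}, of dimension $\widetilde\rho+g+n$, of degree $k-1$ plane curves in $\Sym^2(\PP^1)$ with the marked points allowed to move. The paper's argument in both cases is to impose passage through the node points directly on this family: choosing all $g$ nodes general when $n+\widetilde\rho\geq 0$ (fiber of dimension $n+\widetilde\rho$), and choosing $g'=g+n+\widetilde\rho$ nodes generally plus $g-g'$ nodes on the resulting finite set of curves when $n+\widetilde\rho<0$ (finite fiber), then concluding by semicontinuity for $\overline{\H_{g,k,\e}}\to\overline{\M_g}$ via \eqref{eq:hj2}. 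This bypasses both of the problematic inputs above; I would redo your dominance case along these lines rather than through smooth BN-general curves.
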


Both Theorems \ref{thm:main_mod} and  \ref{thm:mainHur} will be proved in \S \ref{sec:proofs_K3free}.

\subsection{Gonality loci on $K3$ surfaces}\label{sec:intro_K3}

Let  $(S,H)$  be a primitively  polarized complex $K3$ surface of genus $p \geq 2$.  Let $V_{|H|,\delta}\subseteq \vert H\vert$ be the \emph{Severi variety} of curves with $0 \leq \delta \leq p$ nodes, which is known to be nonempty of dimension $g:=p-\delta$, if $(S,H)$ is {\it general in moduli}. It has recently been proved in \cite{BLC} that $V_{|H|,\delta}$ is irreducible for general $(S,H)$ whenever $g \geq 4$ and connected whenever $g\in \{1,2,3\}$, but we will not make use of this.

For $C \in V_{|H|,\delta}$, denote by $\nu:C^{\nu} \to C$ its normalization map. Then $C^{\nu}$ has genus $g$. We have a {\it moduli map}
\[ m_g: V_{|H|,\delta} \longrightarrow \M_g \]
mapping $[C]$ to $[C^{\nu}]$.
We let $\M^1_{g,k}$ denote the locus of curves carrying a $g^1_k$, which is known to be irreducible of codimension $\max\{0,\rho(g,1,k)\}$.
We set
\[ V^k_{|H|,\delta}:=m_g^{-1}(\M^1_{g,k}), \]
that is, $V^k_{|H|,\delta}\subseteq V_{|H|,\delta}$ is the subvariety of curves  whose normalizations carry a $g^1_k$. If $(S,H)$ is general, then every smooth curve in $|H|$ is Brill-Noether general by Lazarsfeld's famous result \cite[Lem. 1.1]{Laz}, whence 
$V^k_{|H|,0}  \neq \emptyset$ if and only if $\rho(p,1,k) \geq 0$, that is, $p \leq 2k-2$; in fact $V^k_{|H|,0} = V_{|H|,0}$ when $p \leq 2k-2$. 
In \cite[Thm. 0.1]{CK} a complete description is given in the case $\delta>0$: if $(S,H)$ is general,  then
$V^k_{|H|,\delta} \neq \emptyset$ if and only if
\begin{equation}
    \label{eq:boundintro}
\rho(p,\alpha,k\alpha+\delta) \geq 0, \text{where}\; \; \alpha:=\Big\lfloor \frac{p-\delta}{2(k-1)}\Big\rfloor,
\end{equation}
equivalently
\[
    \delta \geq \alpha \Big(p-\delta -(k-1)(\alpha+1)\Big).
  \]
In particular, this provides, quite surprisingly, the existence of curves in $|H|$ whose normalizations carry pencils with {\it negative} Brill-Noether number, in contrast to the smooth case.
More precisely, it is proved that $V^k_{|H|,\delta}$ is equidimensional of dimension $\min\{2(k-1),g\}$ under this condition \cite[Rem. 5.6]{KLM}, and has an irreducible component whose general element is an irreducible curve $C$ such that $G^1_k(C^{\nu})$ is again as ``nice'' as possible, that is, $\dim G^1_k(C^{\nu})=\max\{0,\rho(g,1,k)\}$, and when $\rho(g,1,k) \leq 0$ 
(resp. $\rho > 0$), any (resp. the general) $g^1_k$ on $C^{\nu}$ has simple ramification and all nodes of $C$ are non-neutral with respect to it\footnote{A node on a curve is {\it non-neutral} with respect to a $g^1_k$ on the normalization of the curve if the two preimages of the nodes do not lie in the same member of the $g^1_k$, equivalently, if the $g^1_k$ does not descend to the $1$-nodal curve obtained by identifying the two points}.

 We will study the loci of {\it pointed curves in $V^k_{|H|,\delta}$ with prescribed ramification}:

  \begin{defn} \label{def:locus}
    We define $V_{|H|,\delta,n}$ to be the $(g+n)$-dimensional scheme parametrizing pointed curves $(C,x_1,\ldots,x_n)$ such that $C \in V_{|H|,\delta}$ and $x_1,\ldots,x_n$ are distinct points different from the nodes of $C$.

    Recalling \eqref{eq:setting1}-\eqref{eq:setting2}, we define 
    \[
    V^k_{|H|,\delta,\e}=V^k_{|H|,\delta,e_1,\ldots,e_n}\subseteq V_{|H|,\delta,n}
    \]
    to be the sublocus of $V_{|H|,\delta,n}$ such that 
$G^1_k(C^{\nu},(x_1,e_1),\ldots,(x_n,e_n)) \neq \emptyset$. 
\end{defn}

 In the definition we identify $x_i \in C$ with $\nu^{-1}(x_i) \in C^{\nu}$. Note that we have a moduli map
  \begin{equation} \label{eq:modgn}
    m_{g,n}: V_{|H|,\delta,n} \longrightarrow \M_{g,n},
    \end{equation}
    mapping $(C,x_1,\ldots,x_n)$ to $(C^{\nu},x_1,\ldots,x_n)$, and we then see that
    \[ V^k_{|H|,\delta,\e}=m_{g,n}^{-1}(\M_{g,k,\e}).\]
By \eqref{eq:modgn} and what we said in the previous subsection, 
the {\it expected codimension of $V^k_{|H|,\delta,\e}$ in $V_{|H|,\delta,n}$} is
$\max\{0,-\widetilde{\rho}\}$.

A {\it necessary} condition for the nonemptiness of
$V^k_{|H|,\delta,\e}$ is the nonemptiness of $V^k_{|H|,\delta}$, which as recalled above requires \eqref{eq:boundintro} to hold. 
Under the assumption $\widetilde{\rho} \geq -g$, we will prove that the latter is also a {\it sufficient} condition. More precisely, we will prove the following version of Theorems \ref{thm:main_mod} and \ref{thm:mainHur} on $K3$ surfaces. Since the properties in Remark \ref{rem:main_mod} are not automatic in this case, we will make use of the following:

\begin{notation} \label{not:star}
  A $g^1_k$ on a marked curve satisfies $(\star)$ if it is base point free,   has simple ramification outside the marked points, and all ramification points lie in distinct fibers.
  \end{notation}

\begin{thm} \label{thm:main}
  Let $(S,H)$ be a general primitively polarized $K3$ surface of genus $p \geq 2$, let $0 \leq \delta < p$ and set $g=p-\delta$. In the setting \eqref{eq:setting1}-\eqref{eq:setting2}, assume that $\widetilde{\rho}(g,1,k;\e) \geq -g$ holds. 

  Then $V^k_{|H|,\delta,\e} \neq \emptyset$ if and only if \eqref{eq:boundintro} holds, and whenever nonempty, it
has an irreducible component  $V^{k,\star}_{|H|,\delta,\e}$ of the expected codimension $\max\{0,-\widetilde{\rho}\}$ in $V_{|H|,\delta,n}$.
More precisely, the following hold:
 \begin{itemize}
 \item [(i)]
If $\widetilde{\rho} \geq 0$, then
 $V^{k,\star}_{|H|,\delta,\e}$ is a component of  
 $V_{|H|,\delta,n}$, and for a general $(C,x_1,\ldots,x_n) \in V^{k,\star}_{|H|,\delta,\e}$  the variety $G^1_k(C^{\nu},(x_1,e_1),\ldots,(x_n,e_n))$ has dimension $\widetilde{\rho}$, and the general member in any component satisfies $(\star)$ and all nodes of $C$ are non-neutral with respect to it;
\item [(ii)]
If $\widetilde{\rho} < 0$, then for a general
$(C,x_1,\ldots,x_n) \in V^{k,\star}_{|H|,\delta,\e}$  the variety \linebreak $G^1_k(C^{\nu},(x_1,e_1),\ldots,(x_n,e_n))$ is finite, and each member
satisfies $(\star)$ and all nodes of $C$ are non-neutral with respect to it. Moreover,
     \begin{itemize}
     \item [(ii-a)] if  $n+\widetilde{\rho}\geq 0$, then the forgetful map
       $V^{k,\star}_{|H|,\delta,\e}  \to V_{|H|,\delta}$ is dominant onto a component, and
     \item[(ii-b)] if $n+\widetilde{\rho}<0$, then the forgetful map is generically finite.
 \end{itemize}
 \end{itemize}
\end{thm}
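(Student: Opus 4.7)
The plan is to prove Theorem \ref{thm:main} by combining the \cite{CK} existence result for $V^k_{|H|,\delta}$ with a refined version, performed on $K3$ surfaces, of the degeneration to nodal rational curves used to prove Theorem \ref{thm:main_mod} in \S \ref{sec:proofs_K3free}. Necessity of \eqref{eq:boundintro} is immediate: the forgetful map $V^k_{|H|,\delta,\e} \to V^k_{|H|,\delta}$ dropping the marked points shows that nonemptiness of the former forces nonemptiness of the latter, which by \cite[Thm. 0.1]{CK} is equivalent to \eqref{eq:boundintro}. For sufficiency, the idea is to construct a distinguished $\delta$-nodal curve $C \in |H|$ with $n$ smooth marked points $x_1,\ldots,x_n$ such that $C^\nu$ carries a $g^1_k$ with the prescribed ramification profile satisfying $(\star)$, with all $\delta$ nodes non-neutral, and then propagate this construction to a full irreducible component $V^{k,\star}_{|H|,\delta,\e}$ of the expected codimension by deformation/smoothing.

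Concretely, I would degenerate $(S,H)$ to a reducible $K3$-like surface $(S_0,H_0)$ as in \cite{CK}, so that the general curve in $|H|$ specializes to a reducible limit whose components are smooth rational curves meeting along a prescribed configuration of nodes. On this combinatorial special fiber, the construction from the proof of Theorem \ref{thm:main_mod} (which is intrinsic to each rational component together with its prescribed nodes and marked points) produces an explicit limit $g^1_k$ with the required ramification $e_i$ at the marked $x_i$, all nodes non-neutral, and simple ramification and distinct ramification fibers elsewhere. A limit linear series / smoothing argument \`a la Eisenbud--Harris then lifts this pencil to a bona fide $g^1_k$ on the normalization of a nearby nodal curve in $|H|$, producing a point of $V^k_{|H|,\delta,\e}$. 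The lower bound $\dim V^{k,\star}_{|H|,\delta,\e} \geq g+n+\min\{0,\widetilde{\rho}\}$ follows from \eqref{eq:dimG1k} via the moduli map $m_{g,n}$ in \eqref{eq:modgn}, while the matching upper bound, and hence the existence of a component of the expected codimension $\max\{0,-\widetilde\rho\}$, comes from the explicit construction, which produces a family of pencils of the minimal possible dimension.

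Once the component $V^{k,\star}_{|H|,\delta,\e}$ is produced, the refined properties are standard consequences. The conditions $(\star)$, the non-neutrality of all nodes, and (when $\widetilde{\rho}<0$) the finiteness of $G^1_k(C^\nu,(x_1,e_1),\ldots,(x_n,e_n))$ hold by construction on the special fiber and are open in families, hence hold on a Zariski dense open of $V^{k,\star}_{|H|,\delta,\e}$. For (ii-a) and (ii-b), a direct comparison of dimensions suffices: $\dim V_{|H|,\delta}=g$ while $\dim V^{k,\star}_{|H|,\delta,\e}=g+n+\min\{0,\widetilde{\rho}\}$, so the forgetful map $V^{k,\star}_{|H|,\delta,\e}\to V_{|H|,\delta}$ has relative dimension $n+\min\{0,\widetilde{\rho}\}$ over its image, giving dominance onto a component exactly when $n+\widetilde{\rho}\geq 0$ and generic finiteness when $n+\widetilde{\rho}<0$.

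The main obstacle is the refined degeneration itself. One must select the degeneration of the pair $(S,H)$ together with a rational-curve configuration on the central fiber and positions for the nodes and the marked points $x_i$ so that (a) the explicit pencil from Theorem \ref{thm:main_mod} exists on each rational component with the prescribed $e_i$ at $x_i$; (b) all $\delta$ nodes are non-neutral, a property which becomes subtler in the presence of ramification constraints, since these reduce the flexibility in distributing the fibers of the pencil; and (c) the limit linear series smooths to a $g^1_k$ of genuine dimension $\max\{0,\widetilde{\rho}\}$, controlled by a dimension count that must be tight in spite of the extra constraints coming from the $e_i$. Carrying this out is the technical heart of \S \ref{sec:rew}--\ref{sec:proofs}, and requires a careful ramified analogue of the unramified combinatorics of \cite{CK}.
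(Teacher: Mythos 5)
Your overall strategy is indeed the paper's: degenerate $(S,H)$ to the reducible surface of \cite{CK}, reduce to pencils on a rational curve with prescribed ramification subject to descent conditions at distinguished pairs of points, and conclude by semicontinuity. But the proposal stops exactly where the proof has to start: the step you label ``the technical heart'' and defer is the actual content of the theorem, not a routine verification. In the paper's degeneration the limit curve is a section $\Gamma\cong\PP^1$ of $R_1$ together with chains of lines; the pencil lives on $\Gamma$ alone (not on ``each rational component''), each chain contributes one distinguished pair of points of $\Gamma$ that must lie in a single fiber, and in the model $\Sym^2(\PP^1)\cong\PP^2$ this pair is a point of one of the conics $\mathfrak{c}_j=f^{(2)}(|jL_2-(j-1)L_1|)$, the descent condition being that the degree-$(k-1)$ curve $C_{\mathfrak{g}}$ passes through it. The crucial point, absent from your outline, is that these $g$ points are \emph{not} general in $\PP^2$ — they are constrained to the conics $\mathfrak{c}_j$ in numbers $\alpha_j$ governed by \eqref{eq:condex2}--\eqref{eq:condex3} — so one must prove that they nevertheless impose independent conditions on the subfamily $\F_{k,(P_1,\ldots,P_n),\e}\subset\F_k$ of pencils with the prescribed ramification. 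This is what Lemmas \ref{lemma:Cgred} and \ref{lemma:sobs} (reducedness of $C_{\mathfrak{g}}$, and transversality of $C_{\mathfrak{g}}\cap\mathfrak{c}_j$ for general $L_2$) supply, combined with the dimension count of Lemma \ref{lemma:P1suP1}, which is where the hypothesis $\widetilde{\rho}\geq -g$ enters. Without this, one gets neither the upper bound on the dimension of the limit Brill--Noether loci nor, therefore, a component of the expected codimension.

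Two further steps are asserted rather than proved. First, (ii-a) does not follow from ``a direct comparison of dimensions'': knowing $\dim V^{k,\star}_{|H|,\delta,\e}=g+n+\widetilde{\rho}\geq g=\dim V_{|H|,\delta}$ does not rule out that the forgetful map contracts onto a proper subvariety with larger fibers; your phrase ``has relative dimension $n+\min\{0,\widetilde{\rho}\}$ over its image'' assumes what is to be shown. The paper proves dominance by an explicit count (Proposition \ref{prop:bordo}(ii-a)): imposing all $g$ points on the family $\F_{k,\e}$ with \emph{varying} marked points leaves an $(n+\widetilde{\rho})$-dimensional family of pencils on one and the same curve, so the resulting family of unpointed curves has dimension $g$ and fills up $V(\alpha_1,\ldots,\alpha_p)$. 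Second, non-neutrality of all $\delta$ nodes and simple ramification are not simply ``open in families by construction'': non-neutrality is deduced from base-point-freeness of the limit pencil via Lemma \ref{lemma:NN}, and simple ramification at the points tending to the nodes requires the admissible-cover analysis at the end of the proof of Theorem \ref{thm:main}, where each node is replaced by a $\PP^1$ mapping $2:1$ to the target.
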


Theorem \ref{thm:main} will be proved in \S \ref{sec:proofs}. 

The special case of $V^k_{|H|,\delta,k,k}$, that is, of curves carrying pencils with {\it two points of total ramification}, will find a special application in the study of cycles on $K3$ surfaces, as we will explain in the next subsection. In this case, $n=2$ and $e_1=e_2=k$, and one checks that the condition $\widetilde{\rho} \geq -g$ is automatically satisfied; we will actually prove that $V^k_{|H|,\delta,k,k}$ is {\it equidimensional} of dimension $2$, cf. \eqref{eq:uguali} and Theorem \ref{thm:main_cycle} below.

\subsection{Cycles of torsion differences of points on curves on $K3$ surfaces, Beauville-Voisin points and tautological points in $\M_{g,2}$} \label{sec:intro_tor}

Let  $(S,H)$  be a primitively  polarized complex $K3$ surface of genus $p \geq 2$ and $0 \leq \delta \leq p$. As above, for any irreducible curve $C \in |H|$, we denote by $C^{\nu}$ its normalization and $\nu:C^{\nu}\to C$ its normalization map.

The next definition generalizes \cite[Def. 3.1 and 3.4]{Tor23} to the case of nodal curves:

\begin{defn}\label{def:Zn}
  Fix an integer $k \geq 1$. We define the loci
 \[
    Z'^{\circ}_{k,\delta}(S,H):= \left\{ (C,p,q)\in V_{|H|,\delta,2} \; |\; k\cdot[p-q]=0\in \Jac(C^{\nu})\right\} \subset V_{|H|,\delta,2}
\] 
and $Z^{\circ}_{k,\delta}(S,H)$ its image in $S \x S$ by the forgetful map.
We denote their closures by  $Z'_{k,\delta}(S,H)$ and $Z_{k,\delta}(S,H)$.
\end{defn}

With this notation, the cases studied in \cite[Def. 3.1 and 3.4]{Tor23} are 
$Z'_{k,0}(S,H)$ and $Z_{k,0}(S,H)$.

It is immediate to see that 
\begin{equation}
  \label{eq:uguali}
  Z'^{\circ}_{k,\delta}(S,H)=V^k_{|H|,\delta,k,k}.
\end{equation}
Since one may check that the condition $\widetilde{\rho} \geq -g$ 
is satisfied in this case, Theorem \ref{thm:main} yields that $Z'_{k,\delta}(S,H)$ has a component of  dimension $2$  whenever \eqref{eq:boundintro}  is satisfied, and is otherwise empty. We can improve this result to give {\it equidimensionality}
and show that the dimension does not drop when forgetting the curves:

\begin{thm}\label{thm:main_cycle} Let $(S,H)$ be a general primitively polarized K3 surface of genus  $p\geq 2$. Let $0 \leq \delta  < p$  and  $k \geq 2$.  Then $Z'_{k,\delta}(S,H)$ and $Z_{k,\delta}(S,H)$ are equidimensional of  dimension $2$  whenever \eqref{eq:boundintro}  is satisfied, and are otherwise empty. 
 \end{thm}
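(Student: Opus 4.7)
The plan is to deduce the theorem from Theorem~\ref{thm:main} in the case $n=2$, $e_1=e_2=k$, and then promote the ``one component'' statement to equidimensionality via a dimension-counting argument that exploits the symmetry of the situation. First, one verifies
\[
\widetilde{\rho}(g,1,k;k,k) \;=\; 2k-2-g-2k+2 \;=\; -g,
\]
so the hypothesis $\widetilde{\rho}\geq -g$ of Theorem~\ref{thm:main} holds with equality. Combined with the identification \eqref{eq:uguali}, $Z'^{\circ}_{k,\delta}(S,H)=V^k_{|H|,\delta,k,k}$, Theorem~\ref{thm:main} directly gives: nonemptiness if and only if \eqref{eq:boundintro} holds; and, when nonempty, the existence of an irreducible component $V^{k,\star}_{|H|,\delta,k,k}$ of codimension $-\widetilde{\rho}=g$ in $V_{|H|,\delta,2}$, hence of dimension $(g+2)-g=2$. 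This settles the emptiness part and produces a single $2$-dimensional component of $Z'_{k,\delta}(S,H)$.

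Next I would upgrade this to equidimensionality of $Z'^{\circ}_{k,\delta}(S,H)$. The lower bound that every component has dimension $\geq 2$ is incidence-theoretic: writing $\pi_J:\mathcal{J}\to V_{|H|,\delta,2}$ for the relative Jacobian of normalizations (a group scheme of relative dimension $g$), the map
\[
\Psi:V_{|H|,\delta,2}\longrightarrow\mathcal{J},\qquad (C,p,q)\longmapsto[\O_{C^{\nu}}(k(p-q))],
\]
is a section of $\pi_J$, and $Z'^{\circ}_{k,\delta}(S,H)$ is the coincidence locus of $\Psi$ with the zero section. Since the zero section has codimension $g$ in $\mathcal{J}$, every component of the coincidence locus has codimension at most $g$ in $V_{|H|,\delta,2}$, giving dimension at least $2$. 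For the upper bound, \eqref{eq:dimG1k} gives $\dim\G^1_{g,k,k,k}=2g-1$, which forces every component of $\M_{g,k,k,k}$ to have codimension at least $g$ in $\M_{g,2}$. For $(S,H)$ general, the moduli map $m_{g,2}:V_{|H|,\delta,2}\to\M_{g,2}$ is generically finite (a standard rigidity for primitively polarized general $K3$s), so the dimension of any component of $V^k_{|H|,\delta,k,k}=m_{g,2}^{-1}(\M_{g,k,k,k})$ coincides with that of its image in $m_{g,2}(V_{|H|,\delta,2})\cap\M_{g,k,k,k}$. Showing this intersection is proper along each component (expected codimension $g$) then yields dimension at most $(g+2)+(2g-1)-(3g-1)=2$; the required transversality follows from the refined degeneration arguments of \S\ref{sec:proofs_K3free}--\S\ref{sec:proofs} underlying Theorem~\ref{thm:main}, specialized to the case of two totally ramified points.

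Finally, to show that the image $Z_{k,\delta}(S,H)\subseteq S\times S$ is equidimensional of dimension $2$ as well, it suffices to prove that the projection $\pi:Z'^{\circ}_{k,\delta}(S,H)\to S\times S$, $(C,p,q)\mapsto(p,q)$, is generically finite on each irreducible component. For $(p,q)$ general in the image, the curves $C\in V_{|H|,\delta}$ through both $p$ and $q$ form a $(g-2)$-dimensional subfamily of $V_{|H|,\delta}$, on which the $g$ conditions $k(p-q)=0$ in $\Jac(C^{\nu})$ cut out a finite set as soon as they are independent; this independence holds at a general point of $V^{k,\star}_{|H|,\delta,k,k}$ by the explicit information in Theorem~\ref{thm:main}(ii) (namely $(\star)$ and non-neutrality of the nodes), and propagates to every component by the equidimensionality just established. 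The main obstacle in this program is the transversality step in the second paragraph: it is there that one must leverage the explicit degeneration data from Theorem~\ref{thm:main} to confirm that the Brill-Noether conditions cut out $V^k_{|H|,\delta,k,k}$ in $V_{|H|,\delta,2}$ with the expected codimension $g$ on every component, not merely on the distinguished one.
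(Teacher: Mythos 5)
Your reduction to Theorem \ref{thm:main} (computing $\widetilde{\rho}(g,1,k;k,k)=-g$ and using \eqref{eq:uguali}) correctly settles the emptiness criterion and produces one $2$-dimensional component, and your lower bound via the coincidence of the section $\Psi$ with the zero section of the relative Jacobian is sound and essentially equivalent to the paper's Lemma \ref{lem:dimgeq2} (which instead uses that $\kappa_{g,k,(k,k)}$ is bijective together with \eqref{eq:dimG1k} to see that $\M_{g,k,(k,k)}$ is equidimensional of codimension $g$ and pulls back to codimension at most $g$). The genuine gap is in your upper bound. You propose to show that $m_{g,2}(V_{|H|,\delta,2})\cap\M_{g,k,k,k}$ is proper \emph{along each component}, and you assert that ``the required transversality follows from the refined degeneration arguments of \S\ref{sec:proofs_K3free}--\S\ref{sec:proofs}.'' It does not: those degeneration arguments are constructive and only exhibit \emph{one} component $V^{k,\star}_{|H|,\delta,\e}$ of the expected codimension; they give no control whatsoever over other components of the pullback, which is exactly the issue one must overcome to pass from ``has a component of dimension $2$'' to ``equidimensional of dimension $2$.'' (Your appeal to generic finiteness of $m_{g,2}$ for fixed general $(S,H)$ as ``standard rigidity'' is also not innocuous --- the paper itself treats the closely related finiteness of $M_{p,\delta}$ as a nontrivial open issue for $\delta>0$ at the end of \S\ref{sec:Zk} --- but this is secondary.)

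The paper closes this gap by two ideas absent from your proposal. First, since $CH_0(S)$ is torsion free, $k[p-q]=0$ in $\Jac(C^{\nu})$ forces $[p]=[q]$ in $CH_0(S)$, so $Z_{k,\delta}(S,H)$ sits inside the fiber over $0$ of the difference map $S\times S\to CH_0(S)$; Mumford's theorem then bounds \emph{every} component of $Z_{k,\delta}(S,H)$ by dimension $2$ unconditionally (this is \eqref{eq:I}--\eqref{eq:II}). Second, Lemma \ref{lem:finite} shows the forgetful map $Z'^{\circ}_{k,\delta}(S,H)\to Z^{\circ}_{k,\delta}(S,H)$ is finite by a bend-and-break argument on the induced family of rational curves through $kp$ and $kq$ in $\Sym^k(S)$, which transfers the dimension bound back to $Z'_{k,\delta}(S,H)$. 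Your own argument for generic finiteness of the projection to $S\times S$ cannot substitute for this: it explicitly invokes ``the equidimensionality just established,'' which at that point has not been established, and the claimed independence of the $g$ torsion conditions on the $(g-2)$-dimensional family of curves through $(p,q)$ is again only known at a general point of the distinguished component. Without Mumford's bound (or some other a priori, component-independent upper bound), the equidimensionality claim is not proved.
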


If  $\delta=0$,  this says that $Z'_{k,0}(S,H)$ and $Z_{k,0}(S,H)$ are nonempty  with a $2$-dimensional component  for $p\leq 2(k-1)$  and are otherwise empty, which proves Conjecture \cite[Conj. 1]{Tor23}.  On the other hand, 
for any $k\geq 2$ there is a $\delta_0(p,k)\geq 0$ computed by \eqref{eq:boundintro} such that for any $\delta_0(p,k)\leq \delta  < p$, the varieties  $Z'_{k,\delta}(S,H)$ and $Z_{k,\delta}(S,H)$ are nonempty  (and $2$-dimensional). In other words, one can decrease $k$ by adding nodes,   as depicted in the following example for low genus $p$:

 \begin{example} \label{ex:p345} If $p=3$, Theorem \ref{thm:main_cycle} says that  $Z'_{k,0}(S,H)$ is nonempty if and only if $k \geq 3$ and that  $Z'_{2,1}(S,H)$ and $Z'_{2,2}(S,H)$ are nonempty, since one computes $\delta_0(3,2)=1$ from \eqref{eq:boundintro}. 
 
 If $p=4$, Theorem \ref{thm:main_cycle} says that  $Z'_{k,0}(S,H)$
 is nonempty  if and only if $k \geq 3$ and that  $Z'_{2,1}(S,H)$, $Z'_{2,2}(S,H)$ and $Z'_{2,3}(S,H)$ are nonempty,  since one again computes $\delta_0(4,2)=1$ from \eqref{eq:boundintro}.   

If $p=5$,  Theorem \ref{thm:main_cycle} says that  $Z'_{k,0}(S,H)$ is nonempty   if and only if  $k\geq 4$. One computes $\delta_0(5,3)=1$ from \eqref{eq:boundintro}, so the theorem says that $Z'_{3,\delta}(S,H)$ is nonempty for all $\delta\in \{1,2,3,4\}$. Similarly, one computes $\delta_0(5,2)=2$, so the theorem says that $Z'_{2,\delta}(S,H)$ is nonempty for all $\delta\in \{2,3,4\}$,  while $Z'_{2,1}(S,H)=\emptyset$. 
\end{example}

We next relate the loci $Z_{k,\delta}(S,H)\subset S \x S$ provided by Theorem \ref{thm:main_cycle} to the theory of Beauville-Voisin points and constant cycle curves. Recall that, by a classical result of Mumford \cite{Mum}, the Chow group $CH_0(S)$ of $0$-cycles on $S$  is huge. Nonetheless, it contains a distinguished class $c_S$ of degree $1$, called the Beauville-Voisin class, which is defined as the class of a point on a rational curve (cf. \cite[Thm. 1]{BV04}). 
A Beauville-Voisin point (BV-point in short) $p\in S$ is a point with class $c_S$. By \cite[Thm. 1.2]{M04} BV-points are dense in $S$ and the set is expected to be a union of curves. A curve whose points all define the class $c_S$ is called constant cycle curve (cf. \cite[Def. 3.1]{Huy14} and \cite{Voi15}). By \cite[Thm. 11.1]{Huy14} and \cite[Introd.]{CG22}, constant cycle curves are dense. 

In \cite{Tor23} $2$-cycles $Z$ such that $Z_*$ preserves BV-points and constant cycle curves are introduced, a notion we rephrase as:

\begin{defn} \label{def:pres}
  Let $S$ be a $K3$ surface. A 2-cycle $Z\subset S\times S$ is said to {\em preserve BV-points} (respectively, {\em constant cycle curves}) if
  \begin{itemize}
  \item[(i)] there is a lifting $Z_*:Z_i(S)\to Z_i(S)$ to the group $Z_i(S)$ of $i$-cycles of the natural morphism $Z_*:CH_i(S)\to CH_i(S)$, defined on irreducible subvarieties $W\subset S$ as $Z_*W={p_2}_*p_1^*W$, where $p_1,p_2:Z\to S$ are the two projections;
    \item[(ii)] for any BV-point $p \in S$ (resp., constant cycle curve $Y \subset S$), each point (resp. integral curve) in the support of the $0$-cycle $Z_*p$ (resp., in the $1$-cycle $Z_*Y$) is a  BV-point (resp. a constant cycle curve).
  \end{itemize}
\end{defn}

In \cite{Tor23} it is shown that $Z_{k,0}(S,H)_*$ preserves constant cycle curves. We can now extend this study to $\delta>0$. Indeed, as a consequence of Theorem \ref{thm:main_cycle}, we obtain:

\begin{cor}\label{cor:BV} In the setting of Theorem \ref{thm:main_cycle}, assume that \eqref{eq:boundintro} is satisfied. Then $Z_{k,\delta}(S,H)_*$ preserves BV-points and constant cycle curves (cf. Definition \ref{def:pres}).
  
Moreover, the points $(p,q)\in Z_{k,\delta}(S,H)$ defining the class $(c_S,c_S)$ are dense in every component.
\end{cor}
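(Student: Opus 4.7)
The plan is to reduce both statements to the single Chow-theoretic inclusion
\[
Z_{k,\delta}(S,H)\subseteq \mathcal{R}:=\{(p,q)\in S\x S\,:\;[p]_S=[q]_S\text{ in }CH_0(S)\}.
\]
For $(p,q)\in Z^{\circ}_{k,\delta}(S,H)$, lift to $(C,p,q)\in V^k_{|H|,\delta,k,k}$ via \eqref{eq:uguali} and pick a pencil $\mathfrak{g}\in G^1_k(C^\nu,(p,k),(q,k))$: total ramification of order $k$ at both $p$ and $q$ forces the associated degree-$k$ morphism $\phi\colon C^\nu\to\PP^1$ to have $\phi^{-1}(0)=k\cdot p$ and $\phi^{-1}(\infty)=k\cdot q$, hence $k\cdot p\sim k\cdot q$ on $C^\nu$. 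Pushing forward along the composite $C^\nu\to C\hookrightarrow S$ yields $k[p]_S=k[q]_S$ in $CH_0(S)$, and Ro\u{\i}tman's theorem applied to the simply connected $S$ (so $CH_0(S)_0$ is torsion-free) removes the factor $k$. To extend this inclusion from $Z^{\circ}_{k,\delta}$ to $Z_{k,\delta}$, I use that $\mathcal{R}$ is a countable union of closed subvarieties $\mathcal{R}_i\subset S\x S$, arising from the countable nature of rational equivalence through curves in symmetric products: for each irreducible component $W$ of $Z_{k,\delta}$ the dense subset $W\cap Z^{\circ}_{k,\delta}$ lies in $\bigcup_i\mathcal{R}_i$, and the Baire category theorem in the analytic topology forces $W\subseteq\mathcal{R}_i$ for some $i$.

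\textbf{Preservation.} Granted the inclusion $Z_{k,\delta}(S,H)\subseteq\mathcal{R}$, both preservation claims from Definition \ref{def:pres} follow formally. If $p$ is a BV-point and $(p,q)\in Z_{k,\delta}$, then $[q]_S=[p]_S=c_S$ and $q$ is BV, so the support of $Z_{k,\delta}(S,H)_*p$ lies in the BV-locus; similarly, for a constant cycle curve $Y\subset S$, each irreducible curve in the support of $Z_{k,\delta}(S,H)_*Y$ has all its points paired via $\mathcal{R}$ with points of $Y$ (of class $c_S$), hence is itself a constant cycle curve.

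\textbf{Density and main obstacle.} Let $W\subseteq Z_{k,\delta}(S,H)$ be an irreducible component, of dimension $2$ by Theorem \ref{thm:main_cycle}, and analyse $p_1|_W\colon W\to S$. If $p_1|_W$ is dominant it is generically finite onto $S$, so the preimage in $W$ of the Zariski-dense set of BV-points on $S$ (by \cite[Thm.~1.2]{M04}) is Zariski-dense in $W$, and each such preimage is a $(c_S,c_S)$-pair by the preservation step. If instead $p_1(W)=C_0\subset S$ is an irreducible curve, the generic fibre of $p_1|_W$ has the form $\{p\}\x C_p$ with $C_p\subset S$ a curve on which the map to $CH_0(S)$ is constant of value $[p]_S$; by Huybrechts' theorem that on a $K3$ any such curve has class $c_S$ \cite{Huy14}, we obtain $[p]_S=c_S$ for general $p\in C_0$, and so $W$ consists entirely of $(c_S,c_S)$-pairs. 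The remaining case $\dim p_1(W)=0$ would force $W=\{p_0\}\x S$ and $[q]_S=[p_0]_S$ for every $q\in S$, contradicting \cite{Mum}. The main obstacle of the whole argument is the closure step in the first paragraph: rational equivalence does not specialise well in families, and the Baire-theoretic argument via the countable structure of $\mathcal{R}$ is what makes $[p]_S=[q]_S$ persist from $Z^{\circ}_{k,\delta}$ to the full $Z_{k,\delta}$.
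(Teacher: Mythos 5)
Your argument follows essentially the same route as the paper: the central point is the inclusion of $Z_{k,\delta}(S,H)$ in the locus $\{(p,q)\in S\times S: [p]=[q] \text{ in } CH_0(S)\}$, proved exactly as you do (push $k(p-q)\sim 0$ forward from $\Jac(C^{\nu})$ to $CH_0(S)$, use torsion-freeness of $CH_0(S)$, and pass to the closure via the countable structure of the rational-equivalence locus), and the density statement is handled by the same dichotomy on the projections $p_i|_W$ (dominant versus image a curve); your appeal to Huybrechts in the curve case is the same argument as the paper's observation that the fibre curve meets rational curves in $|H|$, so all its points have class $c_S$.

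There is, however, one genuine gap. Condition (i) of Definition \ref{def:pres} --- the existence of a lifting $Z_{k,\delta}(S,H)_*\colon Z_i(S)\to Z_i(S)$ at the level of cycle groups --- is part of what ``preserves BV-points / constant cycle curves'' means, and it does \emph{not} follow formally from the inclusion $Z_{k,\delta}(S,H)\subseteq\mathcal{R}$: one must know that ${p_2}_*p_1^*W$ is a well-defined cycle of the correct dimension, which requires controlling the fibre dimensions of the projections. The paper gets this from the equidimensionality of dimension $2$ (Theorem \ref{thm:main_cycle}), which forces each component to have either a dominant projection or both projections with $1$-dimensional image, and then invokes \cite[Lem.~2.1]{Tor23} to produce the lifting on $Z_1(S)$ and $Z_0(S)$. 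You in fact establish exactly this dichotomy in your density paragraph (ruling out $\dim p_1(W)=0$ via Mumford), so the repair is only to move that observation to the front and use it to justify condition (i); as written, your ``Preservation'' step proves only condition (ii).
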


The latter property in the corollary allows us to give an application to tautological points in $\M_{g,2}$, in view of \cite[Thm. 1.5]{PS19} relating them to BV-points on curves on $K3$ surfaces.
Recall that, by \cite[Thm. 1.1]{GV01}, the degree-$0$ tautological group $R_0(\overline{\mathcal{M}_{g,n}}) \subseteq CH_0(\overline{\mathcal{M}_{g,n}})$ is always isomorphic to $\mathbb{Q}$, even though $CH_0(\overline{\mathcal{M}_{g,n}})$ is expected to be huge except for finitely many $(g,n)$ (cf. \cite[Spec. 1.1]{PS19}).  This setting is similar to the one on K3 surfaces. In \cite[Thm. 1.5]{PS19} it is proved that $(C,x_1,..,x_n)\in\overline{\M_{g,n}}$ is tautological if $C$ is a  smooth  curve of genus $g$ sitting in a K3 surface $S$, $x_i\in C$ is a BV-point for $1\leq i\leq n$ and $n\leq g$. 
As an application of this relation and Corollary \ref{cor:BV} we obtain:

\begin{cor} \label{cor:corBV}
  In the setting of Theorem  \ref{thm:main_cycle}, 
  assume that  $\rho(g,1,k) \geq 0$ . Then the set of tautological points in $m_{g,2}(Z'^{\circ}_{k,0}(S,H))=m_{g,2}(V^k_{|H|,0,k,k})\subset \M_{g,2}$  
(cf. \eqref{eq:modgn}) is dense in every component, for  $g\geq 2$. 
    \end{cor}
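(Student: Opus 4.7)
The plan is to derive the corollary by applying \cite[Thm. 1.5]{PS19}---which says that $(C,x_1,\ldots,x_n) \in \overline{\M_{g,n}}$ is tautological whenever $C$ is an integral curve of geometric genus $g$ on a $K3$ surface $S$, each $x_i \in C$ is a Beauville-Voisin point of $S$, and $n \leq g$---to the dense locus of Beauville-Voisin pairs supplied by Corollary \ref{cor:BV}.

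Let $W$ be an irreducible component of the closure of $m_{g,2}(V^k_{|H|,\delta,k,k})$ in $\M_{g,2}$, and pick an irreducible component $V_0$ of $V^k_{|H|,\delta,k,k} = Z'^{\circ}_{k,\delta}(S,H)$ dominating $W$ under $m_{g,2}$. By Theorem \ref{thm:main_cycle}, $\dim V_0 = 2$, and the forgetful map $f \colon V_0 \to Z_{k,\delta}(S,H) \subset S \times S$ is generically finite onto a component $U$ of $Z_{k,\delta}(S,H)$. Corollary \ref{cor:BV} tells us that the subset $U^{\mathrm{BV}} \subset U$ of pairs $(p,q)$ with both $p$ and $q$ Beauville-Voisin points of $S$ is (analytically) dense in $U$. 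Since $f$ is a generically finite surjection between irreducible surfaces, its preimage $V_0^{\mathrm{BV}} := f^{-1}(U^{\mathrm{BV}})$ is dense in $V_0$: on a dense open subset $f$ is a finite analytic covering, and preimages of dense sets under open continuous maps are dense. By continuity of the morphism $m_{g,2} \colon V_0 \to W$, the image $m_{g,2}(V_0^{\mathrm{BV}})$ is then dense in $m_{g,2}(V_0)$, hence in $W$.

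It remains to observe that every point of $m_{g,2}(V_0^{\mathrm{BV}})$ is tautological in $\M_{g,2}$. Indeed, for any $(C,p,q) \in V_0^{\mathrm{BV}}$, the curve $C \subset S$ is integral of geometric genus $g$ and both $p$ and $q$ are Beauville-Voisin points on $S$; since $g \geq 2$, the condition $n = 2 \leq g$ of \cite[Thm. 1.5]{PS19} is satisfied, so $m_{g,2}(C,p,q) = (C^{\nu}, p, q) \in \M_{g,2}$ is tautological. In an otherwise routine assembly, the only mildly technical point is the transfer of density across the generically finite forgetful map $f$, which is formal from the \'etale/covering structure of such a map on a dense open subset; once this is in place, the statement reduces to invoking the two aforementioned ingredients.
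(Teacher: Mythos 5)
Your proposal is correct and follows essentially the same route as the paper: transfer the density of Beauville--Voisin pairs from $Z_{k,\delta}(S,H)$ (Corollary \ref{cor:BV}) back to $Z'^{\circ}_{k,\delta}(S,H)$ via the (generically) finite forgetful map, push forward under $m_{g,2}$, and invoke \cite[Thm. 1.5]{PS19} with $n=2\leq g$. The only difference is that you spell out the density-transfer step across the generically finite map, which the paper leaves implicit.
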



 Theorem \ref{thm:main_cycle} and Corollaries \ref{cor:BV} and \ref{cor:corBV} will be proved in \S \ref{sec:Zk}.

\section{Proofs of Theorems \ref{thm:main_mod} and \ref{thm:mainHur}} \label{sec:proofs_K3free}

 Let $\overline{\M_{g,n}}$ be the Deligne-Mumford compactification of $\M_{g,n}$. 
We denote by $\overline{\M_{g,k,\e}} \subset \overline{\M_{g,n}}$ the compactification of $\M_{g,k,\e}$ in $\overline{\M_{g,n}}$. 

Let $\overline{\H_{g,k,\e}}$ be the (Deligne-Mumford) compactification  of the Hurwitz space $\H_{g,k,\e}$, which is given by a generalization of the classical theory of admissible covers, see, e.g., \cite{HMu,FP,ACV,CMR}. 
We denote by
\begin{equation}
  \label{eq:hj3} \overline{\mu}_{g,k,\e}: \overline{\H_{g,k,\e}} \longrightarrow \overline{\M_{g,k,\e}}
\end{equation}
the  forgetful map sending the domain of a cover to its stable reduction, which is an

extension of $\mu_{g,k,\e}:=\kappa_{g,k,\e}\circ \lambda_{g,k,\e}$ (cf. \eqref{eq:kappa} and \eqref{eq:lambda}). 
We recall that by the theory of admissible covers, $(X,P_1,\ldots,P_n)$ with $X$ irreducible lies in $\overline{\M_{g,k,\e}}$ if and only if the normalization $\nu:X^{\nu} \to X$ has the property that $X^{\nu}$ carries a $g^1_k$ in $G^1_k(X^{\nu},(P_1,e_1),\ldots,(P_n,e_n))$ such that each pair of points on
$X^{\nu}$ lying above a node of $X$ lies in the same fiber of the $g^1_k$. In other words, the morphism $X^{\nu} \to \PP^1$ defined by the $g^1_k$ factors through the normalization $X^{\nu}\to X \to \PP^1$. We call such a $g^1_k$ on $X^{\nu}$ a {\it descending} $g^1_k$ and denote by $G^1_k(X, (P_1,e_1),\ldots, (P_n,e_n))$ the locus of such descending $g^1_k$s. Then the theory of admissible covers yields 
 \begin{equation}
   \label{eq:hj2}
   \dim \overline{\mu}_{g,k,\e}^{-1}\left((X,P_1,\ldots,P_n)\right) = \dim G^1_k(X,(P_1,e_1),\ldots,(P_n,e_n)) \;\; \mbox{(on each component)}.
 \end{equation}

To prove Theorem \ref{thm:main_mod} we will find an $n$-pointed $g$-nodal rational curve $(X,P_1,\ldots,P_n)$ in $\overline{\M_{g,k,\e}}$ such that
$G^1_k(X,(P_1,e_1),\ldots,(P_n,e_n))$ has dimension $\max\{0,\widetilde{\rho}\}$. 
 
 To construct $g^1_k$s on $\PP^1$s descending to nodal models, we will work on $\Sym^2(\PP^1)$.  As customary, we identify $\Sym^2(\PP^1)$ with  $\PP^2$, in such a way that the diagonal $\Delta$ is a conic and each coordinate curve $\{x+y\;|\; y\in \PP^ 1\}$ is the tangent line $\ell_x$ to $\Delta$ at $2x$.

 Consider $\QQ=\PP^ 1\times \PP^ 1$ with the two projections $\pi_i: \QQ\to \PP^ 1$, $i=1,2$ and the line bundle $\O_{\QQ}(k,k):=\pi_1^ *(\O_{\PP^ 1}(k))\otimes \pi_2^ *(\O_{\PP^ 1}(k))$, for any $k \in \ZZ^+$.
Then $H^0(\O_{\QQ}(k,k))=H^ 0(\PP^1, \O_{\PP^ 1}(k))^ {\otimes 2}$. The two subspaces $\Sym^ 2(H^ 0(\PP^ 1, \O_{\PP^ 1}(k)))$ and 
$\wedge^ 2 H^ 0(\PP^ 1, \O_{\PP^ 1}(k))$ are invariant  (resp. anti-invariant) under the natural involution that exchanges the coordinates. Hence they are pull-backs of sections of line bundles on $\Sym^ 2(\PP^ 1)$, say $\O_k^+$ and $\O_k^-$, respectively.  For instance, one has 
\begin{equation} \label{eq:isom-}
  H^ 0(\Sym^ 2(\PP^ 1), \O_k^ -)\cong \wedge^ 2 H^ 0(\PP^ 1, \O_{\PP^ 1}(k))\cong
  H^ 0(\PP^2, \O_{\PP^ 2}(k-1)).
  \end{equation}
Let $\mathfrak{g}$ be any $g^1_k$ on $\PP^1$. Then it can be identified with 
a point   of the grassmannian $\mathbb G(1,k)\subset \PP( \wedge^ 2 H^ 0(\PP^ 1, \O_{\PP^ 1}(k)))$, which by \eqref{eq:isom-} can again be identified with the degree $k-1$ curve in $\PP^ 2$
\[ C_{\mathfrak{g}}= \{ W \in \Sym^2(\PP^1) \; | \; \mathfrak{g}(-W) \geq 0\}.\]

\begin{defn} \label{def:Fk}
  We denote the family of curves $\{C_{\mathfrak{g}}\}_{\mathfrak{g} \in G^1_k(\PP^1)}$ by $\F_k$.
\end{defn}

Note that $\F_k$  is irreducible of dimension $2(k-1)=\dim(\mathbb{G}(1,k))$.

\begin{lemma} \label{lemma:Cgred}
  The curve $C_{\mathfrak{g}}$ is reduced if and only if $\mathfrak{g}$ does not have multiple base points.
\end{lemma}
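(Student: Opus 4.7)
The plan is to reduce to the base-point-free case via an explicit factorization of the defining equation of $C_{\mathfrak{g}}$. Choose a basis $f,g$ of $\mathfrak{g} \subset H^0(\O_{\PP^1}(k))$, write the base divisor of $\mathfrak{g}$ as $B = \gcd(f,g) = \prod_i (t-p_i)^{m_i}$, and set $f = B\tilde f$, $g = B\tilde g$, so that $\tilde{\mathfrak{g}} := \langle \tilde f, \tilde g\rangle$ is base-point-free. Under the isomorphism \eqref{eq:isom-}, $f \wedge g$ corresponds to the symmetric bidegree-$(k-1,k-1)$ polynomial
\[ Q(s,t) := \frac{f(s)g(t) - f(t)g(s)}{s-t}, \]
which is the pullback via the quotient map $\pi : \PP^1 \times \PP^1 \to \Sym^2(\PP^1) \simeq \PP^2$ of the degree-$(k-1)$ defining polynomial $\tilde c$ of $C_{\mathfrak{g}}$. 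A direct manipulation gives $Q(s,t) = B(s)B(t)\cdot \tilde Q(s,t)$, where $\tilde Q$ is the analogous polynomial for $\tilde{\mathfrak{g}}$. Since $(s-p)(t-p)$ is (up to a nonzero scalar) the pullback of the linear form defining $\ell_p$, this yields the divisor identity in $\PP^2$
\[ C_{\mathfrak{g}} \;=\; \sum_i m_i\, \ell_{p_i} \;+\; \tilde C_{\tilde{\mathfrak{g}}}. \]
A short check shows that $\ell_p \subseteq C_{\mathfrak{h}}$ forces $p$ to be a base point of $\mathfrak{h}$ (otherwise imposing $p$ cuts $\mathfrak{h}$ to a one-dimensional subspace, whose unique divisor has only finitely many other zeros), so $\tilde C_{\tilde{\mathfrak{g}}}$ contains none of the $\ell_{p_i}$.

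This already settles one direction: if some $m_i \geq 2$, then $\ell_{p_i}$ appears in $C_{\mathfrak{g}}$ with multiplicity $m_i \geq 2$, and $C_{\mathfrak{g}}$ is non-reduced. For the converse, when all $m_i = 1$ the lines $\ell_{p_i}$ are pairwise distinct (as the $p_i$ are) and none is a component of $\tilde C_{\tilde{\mathfrak{g}}}$; the problem therefore reduces to proving that $C_{\mathfrak{g}}$ is reduced whenever $\mathfrak{g}$ itself is base-point-free.

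This last step is the main obstacle. The plan is to show that $Q = \pi^*\tilde c$ is square-free in $\CC[s,t]$, which is enough because a repeated irreducible factor of $\tilde c$ would pull back to a repeated factor of $Q$. Equivalently, $P(s,t) := f(s)g(t) - f(t)g(s) = (s-t)Q(s,t)$ must be square-free. The diagonal factor $(s-t)$ enters $P$ with multiplicity exactly one, since $Q(s,s)$ equals the Wronskian $f(s)g'(s)-f'(s)g(s)$, which is nonzero for linearly independent $f,g$. Any other repeated irreducible factor of $P$ would cut out a one-dimensional subset of $\{P=0\}$ along which $P$, $\partial_s P$ and $\partial_t P$ vanish simultaneously. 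A short computation translates this system at $(s_0,t_0)$ into the conditions that $\phi(s_0)=\phi(t_0)$ and that $s_0,t_0$ are both ramification points of the degree-$k$ morphism $\phi : \PP^1 \to \PP^1$ defined by $\mathfrak{g}$ (using base-point-freeness to divide out common factors). By Riemann-Hurwitz, $\phi$ has only $2k-2$ ramification points, so this locus is finite and cannot support a one-dimensional component. Hence $P$ is square-free, whence $Q$ and $\tilde c$ are as well, completing the proof.
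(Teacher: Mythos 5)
Your proof is correct, and while the non-reduced direction coincides with the paper's (the paper likewise observes that a base point of multiplicity $m$ splits off the line $\ell_P$ with multiplicity $m$, leaving the residual curve $C_{\mathfrak{g}(-mP)}$), your converse takes a genuinely different and more computational route. The paper disposes of the base-point-free (or simple-base-point) case in three lines: if $\mathfrak{g}$ has no multiple base points, its general member consists of $k$ distinct points, so for general $P$ the intersection $C_{\mathfrak{g}} \cap \ell_P$ consists of $k-1$ distinct points; since $\deg C_{\mathfrak{g}} = k-1$, a multiple component would force a non-reduced intersection with every line, so $C_{\mathfrak{g}}$ is reduced. You instead prove square-freeness of the bihomogeneous form $P(s,t)=f(s)g(t)-f(t)g(s)$ directly: the diagonal factor is simple because $Q(s,s)$ is the Wronskian, and any other repeated factor would produce a one-dimensional locus where $P$ and both partials vanish, which you correctly identify (using base-point-freeness) with pairs of ramification points of $\phi=[f:g]$ in a common fiber --- a finite set. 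Both arguments are sound; the paper's is shorter and avoids coordinates, while yours buys the explicit divisor decomposition $C_{\mathfrak{g}}=\sum_i m_i\ell_{p_i}+\tilde C_{\tilde{\mathfrak{g}}}$ and a description of the singular points of $C_{\mathfrak{g}}$ in terms of the ramification of $\phi$, neither of which is needed for the lemma but both of which are correct and potentially useful elsewhere. One cosmetic caveat: to make the square-freeness argument airtight you should phrase it for the bihomogeneous form on $\PP^1\times\PP^1$ rather than the affine polynomial in $\CC[s,t]$, checking that the forms $s_0$, $t_0$ at infinity do not divide $P$ --- which again follows from base-point-freeness --- but this is routine.
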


\begin{proof}
  If $P \in \PP^1$ is a base point of multiplicity $n \geq 2$ of $\mathfrak{g}$, then $C_{\mathfrak{g}}$ contains the line $\ell_P:=\{P+Q \; | \; Q \in \PP^1\} \subset \Sym^2(\PP^1)$ as a component, and the residual curve is $C_{\mathfrak{g}(-nP)}$. As the latter has degree $k-1-n$, the line $\ell_P$ has multiplicity $n$, whence
    $C_{\mathfrak{g}}$ is not reduced.

    If $\mathfrak{g}$ does not have multiple base points, then its general member consists of $k$ distinct points. Then $C_{\mathfrak{g}}$ intersects the line $\ell_P$ in $k-1$ distinct points. As $C_{\mathfrak{g}}$ has degree $k-1$, this shows that it is reduced.
\end{proof}

We will need the following: 

\begin{lemma} \label{lemma:P1suP1}
  If $e_1,\ldots,e_n,k$ are integers such that $2 \leq e_i \leq k$, $k \geq 2$ and $\widetilde{\rho}(0,1,k;\e) \geq 0$, then $G^1_k(\PP^1,(P_1,e_1),\ldots,(P_n,e_n)) \neq \emptyset$ for any distinct points $P_1,\ldots,P_n \in \PP^1$.  Moreover, every component has dimension at least $\widetilde{\rho}(0,1,k;\e)$
    and equality holds if $P_1,\ldots,P_n$ are general.  
\end{lemma}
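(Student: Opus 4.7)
The plan is to view $G := G^1_k(\PP^1, (P_1, e_1), \ldots, (P_n, e_n))$ as a closed subscheme of the Grassmannian $\mathbb G(1, k) := \Gr(2, H^0(\O_{\PP^1}(k)))$ of dimension $2(k-1)$, defined by Schubert conditions. Setting
\[
\sigma_{e_i}(P_i) := \{V \in \mathbb G(1, k) : V \cap H^0(\O_{\PP^1}(k)(-e_i P_i)) \neq 0\}
\]
for the Schubert cycle of codimension $e_i - 1$ encoding the existence of a section in $V$ vanishing to order at least $e_i$ at $P_i$, one identifies $G$ (up to possible lower-dimensional strata consisting of $g^1_k$s with base points at the $P_i$) with $\bigcap_{i=1}^n \sigma_{e_i}(P_i)$. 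Since each Schubert cycle $\sigma_{e_i}(P_i)$ is locally defined by $e_i - 1$ equations, every component of $G$ has dimension at least
\[
2(k-1) - \sum_{i=1}^n (e_i - 1) = 2k - 2 - e + n = \widetilde{\rho}(0, 1, k; \e),
\]
yielding the desired lower bound.

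For non-emptiness for any distinct points $P_1, \ldots, P_n \in \PP^1$, I would invoke Osserman's Theorems~2.4 and 2.6 of \cite{Os}, cited in the introduction and directly applicable to the smooth pointed genus-$0$ curve $(\PP^1, P_1, \ldots, P_n)$: the hypothesis $\widetilde{\rho}(0, 1, k; \e) \geq 0$ is precisely the condition for the Schubert intersection number $\int_{\mathbb G(1, k)} \prod_{i=1}^n \sigma_{e_i - 1}$ to be a positive integer (via Pieri's formula in the Chow ring of $\mathbb G(1, k)$), and positivity of this class forces the corresponding topological intersection $\bigcap_i \sigma_{e_i}(P_i)$ to be non-empty for any configuration of distinct points, hence $G \neq \emptyset$.

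Finally, for the equality $\dim G = \widetilde{\rho}$ when $P_1, \ldots, P_n$ are general, I would combine the lower bound above with Kleiman's generic transversality theorem applied via the $\PGL_2$-action on $\PP^1$, supplemented (for $n \geq 4$, where $\PGL_2$ no longer acts transitively on configurations) by an upper-semicontinuity argument in the universal Schubert family over the configuration space of $n$ distinct points: for generic configurations the cycles $\sigma_{e_i}(P_i)$ intersect with the expected codimension, giving pure dimension exactly $\widetilde{\rho}$. The hard part is the non-emptiness for arbitrary distinct $P_i$, which hinges on the positivity of the above Schubert intersection number; I take this as input from \cite{Os}.
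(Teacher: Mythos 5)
Your lower bound and your non-emptiness argument are essentially sound, but the third step --- equality of dimension for general points --- has a genuine gap. Kleiman's transversality theorem requires a group acting transitively on the \emph{ambient} variety, and $\PGL_2$ does not act transitively on $\mathbb G(1,k)=\Gr(2,H^0(\O_{\PP^1}(k)))$. The Schubert varieties $\sigma_{e_i}(P_i)$ are defined with respect to the osculating flags of the rational normal curve at the $P_i$, which are very special flags; moving the $P_i$ by $\PGL_2$ translates all of these cycles simultaneously by the induced (non-transitive) action, so no generic-translate argument is available. Dimensional transversality of osculating Schubert conditions at general distinct points is precisely the content of the Eisenbud--Harris theorem \cite[Thm.~4.5]{EH} in genus $0$; it is a theorem, not a formal consequence of generic transversality. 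Your proposed fallback via upper semicontinuity over the configuration space is also incomplete as stated: semicontinuity only yields $\dim\le\widetilde\rho$ generically once you exhibit \emph{one} configuration of distinct points where the intersection has dimension exactly $\widetilde\rho$, and you do not produce such a configuration (letting points collide only weakens the conditions). The paper closes this step simply by citing \cite[Thm.~4.5]{EH}, and you should do the same. A secondary imprecision: the special Schubert variety $\sigma_{e_i}(P_i)$ is not ``locally defined by $e_i-1$ equations'' for $e_i\ge 3$ (it is a rank-$\le 1$ determinantal locus of a $2\times e_i$ matrix, cut out by its $2\times 2$ minors); the correct justification for the lower bound is the Eagon--Northcott/Macaulay bound on the codimension of determinantal loci, which does give that every component of the intersection has dimension at least $2(k-1)-\sum(e_i-1)=\widetilde\rho$.

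On non-emptiness your route diverges from the paper's by design. Citing Osserman \cite[Thms.~2.4 and 2.6]{Os} and the fact that a nonzero product of Schubert classes on a projective homogeneous space forces the set-theoretic intersection to be nonempty for \emph{any} flags is legitimate (after reducing to $\widetilde\rho=0$ by appending simple ramification points, since for $\widetilde\rho>0$ the class $\prod\sigma_{e_i-1}$ is not of top degree and ``$\int$'' of it is not what you want). However, the paper explicitly sets out to give a proof \emph{independent of Schubert calculus}: it reduces to $\widetilde\rho=0$ and then applies Riemann's Existence Theorem, constructing by induction cycles $\sigma_1,\dots,\sigma_n\in\Sym(k)$ of orders $e_1,\dots,e_n$ with product the identity and generating a transitive subgroup. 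That combinatorial construction buys a self-contained, elementary existence proof (and directly produces honest base-point-free covers with the exact ramification profile, rather than a point of the Schubert intersection that could a priori lie in a degenerate stratum); your version outsources the positivity of the intersection number to \cite{Os}, which is exactly the dependence the authors wanted to remove.
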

\begin{proof}
  By the classical result \cite[Thm. 4.5]{EH} of Eisenbud and Harris, it suffices to prove nonemptiness. Moreover, it suffices to prove it in the case $\widetilde{\rho}(0,1,k;\e) =0$, that is, 
  $e=\sum e_i=2(k-1)+n$, by adding further simple ramification points if necessary to complete the ramification profile. The latter has been proved by Osserman \cite[Thms. 2.4]{Os}. We provide a proof independent of Schubert calculus.

  By Riemann's existence theorem (cf., e.g., \cite[III, Cor. 4.10]{Mi}) it suffices to prove that there are cycles $\sigma_1,\ldots,\sigma_n \in \Sym(k)$ of orders $e_1,\ldots,e_n$ generating a transitive subgroup and such that $\sigma_1\cdots\sigma_n=\id_k$ (the identity in $\Sym(k)$). We will do this by proving the following claim by induction on $k$:

  \begin{claim}
    There exist cycles $\sigma_1,\ldots,\sigma_n \in \Sym(k)$ of orders $e_1,\ldots,e_n$ such that
    \begin{itemize}
    \item[(i)] $\sigma_1\cdots\sigma_n=\id_k$,
    \item[(ii)] the subgroup generated by $\sigma_1,\ldots,\sigma_n$ is transitive,
      \item[(iii)] for all $i \in \{1,\ldots,n-1\}$, $\sigma_i$ and $\sigma_{i+1}$ are not disjoint. 
    \end{itemize}
  \end{claim}

  \begin{proof}[Proof of claim]
    We first treat the case $e_1=\cdots=e_n=2$, in which case $n=2k-2$. Then
    \[ \sigma_j=(j\;\;j+1), \;\; \sigma_{k-1+j}=\sigma_{k-j} \;\; \mbox{for} \;\;  j\in \{1,\ldots,k-1\}\]
    will do.

    We then prove the claim by induction on $k$. In the case $k=2$, we only have the case $e_1=e_2=2$, which falls into the case treated above.

    Assume now that $k>2$. We may also assume that $\max\{e_j\} \geq 3$, and we note that at most one $e_i$ equals $k$.  To ease notation we will assume that $\max\{e_j\}=e_1$, since the other cases are treated similarly.  Note that $n \geq 2$. We set
    \[
  e'_i    =\begin{cases}
    e_i-1, & \; \; \mbox{for} \;\; i=1,2, \\
    e_i, & \; \; \mbox{for} \;\; i>2.
    \end{cases}
\]

We have $\sum(e'_i-1)=2(k-2)$ and $e'_i \leq k-1$ for all $i$. If also $e_2 \geq 3$, then all $e'_i \geq 2$, and we may apply the induction hypothesis to find 
$\sigma'_1,\ldots,\sigma'_n \in \Sym(k-1)$ of orders $e'_1,\ldots,e'_n$ such that
    \begin{itemize}
    \item[(i)'] $\sigma'_1\cdots\sigma'_n=\id_{k-1}$,
    \item[(ii)'] the subgroup generated by $\sigma'_1,\ldots,\sigma'_n$ is transitive,
      \item[(iii)'] for all $i \in \{1,\ldots,n-1\}$, $\sigma'_i$ and $\sigma'_{i+1}$ are not disjoint. 
    \end{itemize}    
    Since $\sigma'_1$ and $\sigma'_2$ are not disjoint, there is one integer, say $x \in \{1,\ldots,k-1\}$, appearing in them both. We may write
    \[ \sigma'_1=(x \;\; a_1 \cdots a_s) \;\; \mbox{and} \;\; \sigma'_2=(b_1 \cdots b_t \;\; x).\]
    Set
\[  \sigma_1:=(x \;\; a_1 \cdots a_s\;\;k) \;\; \mbox{and} \;\; \sigma_2:=(k\;\;b_1 \cdots b_t \;\; x).\]  
Then, viewing $\sigma'_1$  and $\sigma'_2$ as elements of $\Sym(k)$, one may check that
\begin{equation} \label{eq:prodo}
  \sigma_1 \sigma_2= \sigma'_1 \sigma'_2.
  \end{equation}
Set now $\sigma_j:=\sigma'_j$ for all $j \in\{3,\ldots,n\}$ and view them as elements of $\Sym(k)$. Then, because of (i)'-(iii)' and \eqref{eq:prodo}, the cycles $\sigma_1,\ldots,\sigma_n \in \Sym(k)$ satisfy (i)-(iii) in the claim.

Assume now that $e_2=2$, so that $e'_2=1$.  We remark that $n>2$; indeed, if $n=2$, then $e_1=e-e_2=[2(k-1)+2]-2=2k-2$, which is incompatible with $e_1\leq k$ and $k>2$.  We have $2 \leq e'_i \leq k-1$ for all $i \neq 2$. Then we apply the  induction hypothesis to find 
$\sigma'_1,\sigma'_3,\ldots,\sigma'_n \in \Sym(k-1)$ of orders $e'_1,e'_3\ldots,e'_n$ such that
    \begin{itemize}
    \item[(i)''] $\sigma'_1\sigma'_3\cdots \sigma'_n=\id_{k-1}$,
    \item[(ii)''] the subgroup generated by $\sigma'_1,\sigma'_3,\ldots,\sigma'_n$ is transitive,
      \item[(iii)''] $\sigma'_1$ and $\sigma'_3$ are not disjoint, and for all $i \in \{3,\ldots,n-1\}$, $\sigma'_i$ and $\sigma'_{i+1}$ are not disjoint. 
    \end{itemize}    
    Since $\sigma'_1$ and $\sigma'_3$ are not disjoint, there is one integer, say $x \in \{1,\ldots,k-1\}$, appearing in them both. We may write
    \[ \sigma'_1=(x \;\; a_1 \cdots a_s).\]
    Set
\[  \sigma_1:=(x \;\; a_1 \cdots a_s\;\;k) \;\; \mbox{and} \;\; \sigma_2:=(k\;\;x).\]  
Then, viewing $\sigma'_1$  as element of $\Sym(k)$, one may check that
\begin{equation} \label{eq:prodo2} \sigma_1 \sigma_2= \sigma'_1.
   \end{equation}
Set now $\sigma_j:=\sigma'_j$ for all $j \in\{3,\ldots,n\}$ and view them as elements of $\Sym(k)$. Then, because of (i)''-(iii)'' and \eqref{eq:prodo2}, the cycles $\sigma_1,\ldots,\sigma_n \in \Sym(k)$ satisfy (i)-(iii) in the claim.  
\end{proof}
Having proved the claim, the lemma follows. 
  \end{proof}

  \begin{proof}[Proof of Theorem \ref{thm:main_mod}]
Since $\widetilde{\rho}(0,1,k;\e)=\widetilde{\rho}(g,1,k;\e)+g$, our assumption $\widetilde{\rho}(g,1,k;\e) \geq -g$, together with 
Lemma \ref{lemma:P1suP1}, yields that  the locus
 $G^1_k(\PP^1,(P_1,e_1),\ldots,(P_n,e_n))$ is nonempty and equidimensional of dimension $\widetilde{\rho}(g,1,k;\e)+g$ for general $P_1,\ldots,P_n \in \PP^1$.  This defines a subfamily  $\F_{k,(P_1,\ldots,P_n),\e}$ of $\F_k$ (cf. Definition \ref{def:Fk}) of dimension
 \begin{equation} \label{eq:dimFkn}
   \dim \F_{k,(P_1,\ldots,P_n),\e}=\widetilde{\rho}(g,1,k;\e)+g.
   \end{equation}

{\bf The case $\widetilde{\rho}(g,1,k;\e) \geq 0$.} For a general set of $g$ points $\{\xi_i=y_i+z_i\}_{1 \leq i \leq g}$ in $\Sym^2(\PP^1)$ the family of curves in $\F_{k,(P_1,\ldots,P_n),\e}$ passing through $\xi_1,\ldots,\xi_g$ has dimension 
$\widetilde{\rho}(g,1,k;\e)$ by \eqref{eq:dimFkn}. This family yields
the variety  $G^1_k(X,(P_1,e_1),\ldots,(P_n,e_n))$ on the $g$-nodal curve $X$ obtained by identifying the $g$ pairs $(y_i,z_i)$ of points on $\PP^1$. Thus, we have proved that $G^1_k(X,(P_1,e_1),\ldots,(P_n,e_n))$ has dimension $\widetilde{\rho}(g,1,k;\e)$.

  {\bf The case $\widetilde{\rho}(g,1,k;\e) < 0$.} Set $g':=\widetilde{\rho}(g,1,k;\e)+g <g$. For a general set of $g'$ points $\{\xi_i=y_i+z_i\}_{1 \leq i \leq g'}$ in $\Sym^2(\PP^1)$ the family of curves in $\F_{k,(P_1,\ldots,P_n),\e}$ passing through $\xi_1,\ldots,\xi_{g'}$ has dimension 
    $0$ by \eqref{eq:dimFkn}. Choose any set of distinct $g-g'$ points $\{\xi_i=y_i+z_i\}_{g'+1 \leq i \leq g}$ on any of the finitely many curves in  $\F_{k,(P_1,\ldots,P_n),\e}$ so obtained. Then, on the $g$-nodal curve $X$ obtained by identifying the $g$ pairs $(y_i,z_i)$ of points on $\PP^1$, the variety $G^1_k(X,(P_1,e_1),\ldots,(P_n,e_n))$ is $0$-dimensional.  

    Theorem \ref{thm:main_mod} now follows by semicontinuity, in view of \eqref{eq:hj2}, \eqref{eq:lambda} and    what we explained right after \eqref{eq:dimG1k}. 
   \end{proof}

 \begin{proof}[Proof of Theorem \ref{thm:mainHur}]    
   To prove the theorem we will prove that the general fibers of $\H_{g,k,\e} \to \M_g$ have dimension $\max\{0,n+\widetilde{\rho}(g,1,k;\e)\}$ on some component. 
We will prove this by degeneration to a $g$-nodal rational curve in $\overline{\M_g}$ as in the previous proof. 

With the notation therein, we set $\F_{k,\e}$ to be the  variety parametrizing
\[ \left(C_{\mathfrak{g}},P_1,\ldots,P_n\right) \;\; \mbox{such that} \;\;
  C_{\mathfrak{g}} \in \F_{k,(P_1,\ldots,P_n),\e} \]
for varying distinct  general  $P_1,\ldots,P_n$. Then $\F_{k,\e}$ is  equidimensional  of dimension
\begin{equation} \label{eq:dimFk}
  \dim \F_{k,\e}=\widetilde{\rho}(g,1,k;\e)+g+n.
\end{equation}

{\bf The case $n+\widetilde{\rho}(g,1,k;\e) \geq 0$.} The same argument as in the previous proof starting with $\F_{k,\e}$ shows that for a general set of $g$ points in $\Sym^2(\PP^1)$, the family of curves in $\F_{k,\e}$ passing through these points has dimension 
$\widetilde{\rho}(g,1,k;\e)+n \geq 0$ by \eqref{eq:dimFk}.
This produces, as in the previous proof, a $g$-nodal rational curve $X$ 
having a
$(\widetilde{\rho}(g,1,k;\e)+n)$-dimensional family of  
$g^1_k$s with the given ramification order at {\it some} $n$ points. Recalling \eqref{eq:hj2}, this proves that the fiber over $X$ of $\overline{\H_{g,k,\e}} \to \overline{\M_g}$ has dimension $n+\widetilde{\rho}(g,1,k;\e)$. The theorem follows by semicontinuity. 

{\bf The case $n+\widetilde{\rho}(g,1,k;\e) < 0$.} Set $g':=\widetilde{\rho}(g,1,k;\e)+g+n <g$. Then, for a general set of $g'$ points in $\Sym^2(\PP^1)$, the family of curves in $\F_{k,\e}$ passing through these points is zero-dimensional by \eqref{eq:dimFk}. Then, on the $g$-nodal curve $X$ obtained by identifying the $g'$ pairs of points on $\PP^1$ corresponding to the $g'$ points in $\Sym^2(\PP^1)$, plus an arbitrary set of $g-g'$ points, we have obtained a finite family of  
$g^1_k$s with the given ramification order at {\it some} $n$ points.
Recalling \eqref{eq:hj2}, this proves that the fiber over $X$ of $\overline{\H_{g,k,\e}} \to \overline{\M_g}$ has dimension $0$. The theorem follows by semicontinuity.
\end{proof}

\section{Building on the construction and results  in \cite{CK}}\label{sec:rew}

To prove Theorem \ref{thm:main}, we will use a degeneration argument developed in \cite{CK}. 

Let $p\geq 3$.  We denote by 
$\K_p$ the \emph{moduli space} of primitively polarized $K3$ surfaces of genus $p$,
which is irreducible of dimension $19$.  The following result is central in our investigation:

\begin{thm}\label{thm:mainCK}  If $(S,H)$ is a general primitive $K3$ surface of genus $p$, then
$V^k_{|H|,\delta} \neq \emptyset$ if and only if \eqref{eq:boundintro} is satisfied. When nonempty, $V^k_{|H|,\delta}$ is equidimensional of dimension $\min\{2(k-1),g\}$.
\end{thm}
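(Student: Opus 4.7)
My approach combines two techniques: a Lazarsfeld--Mukai bundle analysis for the necessity of \eqref{eq:boundintro} and the upper bound on the dimension, and an explicit degeneration argument for sufficiency and the matching lower bound.

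For the upper bound and necessity, starting from $[C]\in V^k_{|H|,\delta}$ with normalization $\nu\colon C^\nu\to C$ and a base-point-free pencil $(A,V)\in G^1_k(C^\nu)$, I would associate a rank-two Lazarsfeld--Mukai bundle $E=E_{C,A}$ on $S$, defined as the dual of the kernel in
\[
0 \longrightarrow E^{\vee} \longrightarrow V\otimes \O_S \longrightarrow \nu_*A \longrightarrow 0,
\]
with computable Mukai vector depending on $p$, $k$ and $\delta$. Since $\Pic S=\ZZ[H]$ for a general $(S,H)\in\K_p$, a Bogomolov-type argument combined with Serre duality forces $E$ either to be $\mu_H$-stable or to split as $M_1\oplus M_2$ with $c_1(M_i)\in\ZZ[H]$. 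The Mukai pairing $\chi(E,E)\geq 2$, optimized over the possible integer parameters, translates exactly into the requirement $\rho(p,\alpha,k\alpha+\delta)\geq 0$ for $\alpha=\bigl\lfloor (p-\delta)/(2(k-1))\bigr\rfloor$, which is \eqref{eq:boundintro}. Allowing $(C,\mathfrak{g})$ to vary and tracking the dimension of the resulting family of LM bundles (controlled by $h^0(E\otimes E^\vee)$ and the Grassmannian of subpencils) bounds every component of $V^k_{|H|,\delta}$ from above by $\min\{2(k-1),g\}$.

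For sufficiency and the lower bound, I would degenerate $(S,H)$ in a one-parameter family $\mathcal{S}\to\Delta$ whose central fibre is a reducible surface $S_0=R_1\cup_E R_2$, where $R_1,R_2$ are smooth rational surfaces glued along a smooth elliptic curve $E\in |-K_{R_i}|$, in the spirit of Ciliberto--Lopez--Miranda. On $S_0$ I would construct an explicit curve $C_0=C_1\cup_\Sigma C_2$ in the limit polarization $|H_0|$, with $C_i\subset R_i$ and $\Sigma=C_1\cap E=C_2\cap E$, chosen so that the partial normalization $\widetilde{C_0}$ at $\delta$ transverse nodes lying away from $E$ carries a $g^1_k$. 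This pencil is built by gluing pencils on each $C_i$ along the points of $\Sigma$, arranging parameters so that the nodes in the smooth locus of $S_0$ are non-neutral. A standard smoothing/deformation argument (vanishing of the obstruction to lifting nodal curves with non-neutral nodes, together with extension of the descending admissible cover to the generic fibre) then produces a family of $\delta$-nodal curves on the general $(S,H)$ whose normalizations carry a $g^1_k$, yielding a component of $V^k_{|H|,\delta}$ of dimension at least $\min\{2(k-1),g\}$.

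The hardest part will be the combinatorics of the degeneration: one must verify that the chosen $C_0$ really deforms to a $\delta$-nodal curve on the general fibre (and not to a curve with fewer nodes, or one acquiring a component in $E$), and that the limit pencil extends to the whole family as an admissible cover. Non-neutrality of the nodes with respect to the constructed $g^1_k$ is exactly what ensures these obstructions vanish; conversely, it explains why the upper bound is attained rather than strictly exceeded. Equidimensionality of $V^k_{|H|,\delta}$ of dimension $\min\{2(k-1),g\}$ then follows by pinching the two inequalities together: the LM-bundle upper bound caps every component, and the smoothing of $C_0$ shows that this cap is realized on some component, while a monodromy/irreducibility argument on the parameter space of LM bundles with fixed Mukai vector ensures the same holds on every component.
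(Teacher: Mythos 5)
First, note that the paper does not reprove this statement: nonemptiness is quoted from \cite[Thm. 0.1]{CK} and equidimensionality from \cite[Rem. 5.6]{KLM}; \S 4 only \emph{reviews} the degeneration of \cite{CK} because the later sections build on it. Your two-pronged architecture --- Lazarsfeld--Mukai bundles for the necessity of \eqref{eq:boundintro} and the dimension upper bound, plus a degeneration to $R_1\cup_E R_2$ with $E$ anticanonical for sufficiency and the lower bound --- is exactly the architecture of the cited proofs, so at the level of strategy you have reconstructed the intended argument. Two steps, however, do not work as you describe them.

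The limit curve you propose is not the one that makes the construction tractable. In \cite{CK}, as reviewed in \S 4, the relevant limit curves are $\Gamma\cup(\text{chains of lines})$, where $\Gamma$ is a section of one ruling of $R_1\cong\PP^1\times\PP^1$: the $g^1_k$ lives entirely on the single rational component $\Gamma$, the chains are contracted in the stable model, and the only compatibility condition is that the distinguished pair of points of each chain lie in one fibre of the pencil. This reduces everything to intersecting the degree-$(k-1)$ curves $C_{\mathfrak g}\subset\Sym^2(\PP^1)\cong\PP^2$ with the conics $\mathfrak c_j$, which is where the dimension count $\min\{2(k-1),g\}$ and the translation of \eqref{eq:boundintro} into the existence of $(\alpha_1,\ldots,\alpha_p)$ satisfying \eqref{eq:condex2}--\eqref{eq:condex3} actually come from. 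Your version --- gluing a pencil on $C_1\subset R_1$ to a pencil on $C_2\subset R_2$ along all of $\Sigma=C_1\cap E$ --- imposes matching conditions at $\deg(C_i\cdot E)$ points and gives no mechanism for producing the claimed dimension; as stated it is a plan, not a proof.

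The equidimensionality step also has a genuine gap. The LM-bundle argument does cap every component from above, but the matching lower bound on \emph{every} component cannot come from ``a monodromy/irreducibility argument on the parameter space of LM bundles'': monodromy can at best move one good component around and says nothing about a hypothetical component of smaller dimension. The correct input is that $V^k_{|H|,\delta}=m_g^{-1}(\M^1_{g,k})$ and the universal $G^1_k$ is locally a determinantal degeneracy locus, so every component of the preimage has codimension at most $-\rho(g,1,k)=g-2(k-1)$ in the $g$-dimensional $V_{|H|,\delta}$; this is what \eqref{eq:dimG1k} encodes and what \cite[Rem. 5.6]{KLM} invokes. With that replacement, and with the degeneration carried out as in \cite{CK}, your outline closes.
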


The nonemptiness part of the theorem is \cite[Thm. 0.1]{CK}, whereas the assertion about equidimensionality follows from  \cite[Rem. 5.6]{KLM}.
In this section we review and build on the proof of the assertion concerning nonemptiness in order to approach the refined ramified version in \S \ref{sec:proofs}. The general strategy consists in degenerating the polarized K3 surface $(S,H)$ to a pair $(S_0,H_0)$  (\S \ref{subsec:limitk3}) so that the Severi variety $V_{|H|,\delta}$ degenerates to a locus containing an irreducible component $V(\alpha_{1}, \ldots,  \alpha_{p})$ (cf. Definition \ref{def:Valpha}), whose curves all contain a special component isomorphic to $\bP^1$ with a collection of pairs of distinguished points (\S \ref{subsec:limitV}). The computation of the dimension of $V^k_{|H|,\delta}$ reduces in this way to computing the dimension of families of  $g^1_k$s on $\bP^1$ satisfying certain compatibility conditions. More precisely, each pair of distinguished points must belong to a divisor of the given $g^1_k$. Such a computation is obtained by intersecting appropriate curves in $\bP^2$ (\S \ref{subsec:Vk}).  

\subsection{Limit K3 surface}\label{subsec:limitk3} We describe now the degeneration of $(S,H)\in \K_p$ to a pair $(S_0,H_0)$ (cf. \cite[\S 4]{CK}).

Let $E$ be a smooth elliptic curve with 
two distinct {\it general} line bundles $L_1,L_2 \in \Pic^2(E)$. Consider the embedding of $E \subset \PP^{2p-1}$ as  a smooth elliptic normal curve of degree $2p$ given by $L_2^ {\otimes p}$. Let $R_1$ and $R_2$ be the rational normal scrolls of degree $2p-2$ in
$\PP^{2p-1}$ defined by $L_1$ and $L_2$, respectively, that is, $R_i$ is the union of lines
spanned by the divisors of  $\vert L_i\vert$. Then
\begin{itemize}
\item $R_1 \cong \PP^1 \x \PP^1$ and $R_2 \cong \mathbb{F}_2$,
\item  $R_1 \cap R_2 =E$ transversally,
\item $E$ is anticanonical on each $R_i$.  
\end{itemize}
Set $R:=R_1 \cup R_2$.

 We will denote by $f:\PP^1 \x \PP^1 \to \PP^1$ the projection to $\PP^1$ defined by the pencil $|L_1|$. Let $\mathfrak{s}_i$ and $\mathfrak{f}_i$ denote the classes of the nonpositive section and fiber, respectively, of $R_i$, for $1\le i\le 2$.  Thus, the members of $|\mathfrak{f}_1|$ are the fibers of $f$.  
Then $\O_{R_1}(1) \cong \O_{R_1}(\mathfrak{s}_1 +(p-1)\mathfrak{f}_1)$ and 
$\O_{R_2}(1) \cong \O_{R_2}(\mathfrak{s}_2 +p\mathfrak{f}_2)$. The section $\mathfrak{s}_2$ does not intersect $E$, hence lies in the smooth locus of $R$. In particular, $\mathfrak{s}_2$ is a Cartier divisor on $R$, so that 
\begin{equation}
  \label{eq:classnuova}
H_0:  =\O_{R}(1)\otimes \O_R(-\mathfrak{s}_2)
\end{equation}
is also Cartier, with $H_0^2=2p-2$ and $\mathfrak{s}_2 \cdot H_0=p$. By
\cite[Lem.  4.1]{CK} the pair $(S_0:=R,H_0)$ can be flatly deformed to a member of $\K_p$. More precisely, there is a flat family $\pi:\X\to \DD$, where $\DD$ is the disc, such that the special fiber
$\pi^{-1}(0)\simeq R$ and the fibers $S_t:=\pi^{-1}(t)$ for $t \neq 0$ are smooth $K3$ surfaces, and a  line bundle $\H$ on $\X$ such that $\H|_{S_0}=H_0$ and, setting
$H_t:=\H|_{S_t}$ also for $t \neq 0$, we have that $(S_t,H_t) \in \K_p$ is general.

Note that
the total space $\X$ has $16$ double points on $E$ in the central fibre, and is otherwise smooth. One may perform a small resolution of the singularities of $\X$, obtaining a new family with unchanged fibers for $t \neq 0$, but whose new central fiber is a birational modification of $R$. Precisely one can make sure that  the new central fiber is $R_1\cup \widetilde R_2$, where $\widetilde R_2$ is  the blow up of  
$R_2$ at the $16$ singular points of $\X$, and $R_1$ and $\widetilde R_2$ meet transversally along $E\subset R_1$ and its strict transform on $\widetilde R_2$. 
Since we can make sure that the interesting curves we work with on $R$ will lie off the singular points of $\X$, we can choose to work with $\X$ without caring about the singularities. 

\subsection{Limit Severi varieties }\label{subsec:limitV} We now describe limit curves of $V_{|H_t|,\delta}$, following \cite[\S 6]{CK}.

Let $m$ be a positive integer satisfying $m \leq p$.
A {\it chain of length $2m-1$} is a
sum of $2m-1$ distinct lines
\[ f_{2,1}+f_{1,1}+f_{2,2}+f_{1,2} + \cdots + f_{2,m-1}+f_{1,m-1}+f_{2,m}, \; f_{i,j} \in |\mathfrak{f}_i|, \]
where 
$f_{1,j}$ intersects only $f_{2,j}$ and $f_{2,j+1}$, for $j=1, \ldots, m-1$.
The chain intersects $E$ in $2m$ points, consisting of $m$ divisors of  $|L_2|$, and 
$2m-2$ of them lie on the intersections between two lines, whereas the remaining two are on $f_{2,1}$ and $f_{2,m}$ and will be denoted by $a$ and $b$, respectively. This pair of points will be called {\it the distinguished pair of points of the chain}. Figure \ref{fig:1}  shows a chain of length $9$ and its distinguished pair of points. If the chain is contained in a member $C$ of $|H_0|$, and we denote by $\Gamma$ the section of the ruling on $R_1$ contained  in $C$, the distinguished points are $a=\Gamma \cap f_{2,1}$ and $b=\Gamma \cap f_{2,m}$.
Denoting the restriction of $f$ to $E$ still by $f$ (so that $f:E \to \Gamma \cong \PP^1$ is the morphism determined by the linear series $|L_1|$), we  note that $a+b \in f_*\left(|mL_2-(m-1)L_1|\right)$. 

\begin{figure}
\begin{center}
\begin{tikzpicture}[>=stealth,scale=0.8]
 
\draw[thick,blue] 
 (0.5,-1) to[out=100,in=-70]
(-0.8,0.6)  to[out=110,in=-90]
(-0.9,1)  to[out=90,in=-90]
(-1,2)  to[out=90,in=-90]
(-1,3)  to[out=90,in=-110]
(-1,4.7) to[out=70,in=-45]
(1,3.7);

\draw[blue] (0.2,4.5) node[above right] {$\Gamma$};

\draw (-0.7,0) -- (1.7,0) node[right] {$f_{2,5}$};
\draw (-0.7,1) -- (1.7,1) node[right] {$f_{2,4}$};
\draw (-0.7,2) -- (1.7,2) node[right] {$f_{2,3}$};
\draw (-0.7,3) -- (1.7,3) node[right] {$f_{2,2}$};
\draw (-0.7,4) -- (1.7,4) node[right] {$f_{2,1}$};

\draw (1.5,-0.5) -- (-1.4,1.9) node[left] {$f_{1,4}$};
\draw (1.5,0.5) -- (-1.4,2.9) node[left] {$f_{1,3}$};
\draw (1.5,1.5) -- (-1.4,3.9) node[left] {$f_{1,2}$};
\draw (1.5,2.5) -- (-1.4,4.9) node[left] {$f_{1,1}$};

\draw[blue] (-0.95,1.543) circle [radius=2.5pt];
\draw[blue] (-1,2.56) circle [radius=2.5pt];
\draw[blue] (-1.05,3.6) circle [radius=2.5pt];
\draw[blue] (-1.04,4.6) circle [radius=2.5pt];

\draw[red,fill] (-0.35,0) circle [radius=1.5pt] node[below left] {$b$};
\draw[red,fill] (0.67,4) circle [radius=1.5pt] node[above right] {$a$};
\end{tikzpicture}

\caption{A chain of length $9$ contained in a member of $|H_0|$}
\label{fig:1}
\end{center}
\end{figure}
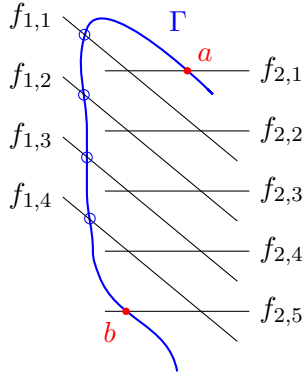

We denote by $\C_{m}$ the family of  chains of length $2m-1$, which is a locally closed subvariety of a Hilbert scheme of curves on $R$. In \cite[Lem. 4.2]{CK} it is proved that the map
\begin{equation} \label{eq:chain-pair}
  \C_m \longrightarrow |mL_2-(m-1)L_{1}|
\end{equation}
sending a  chain of length $2m-1$ to 
its pair of distinguished points on $E$ is  birational and  injective. 

\begin{defn} \label{def:Valpha}
  For each tuple $(\alpha_1,\ldots, \alpha_p)$ of {\it nonnegative} integers satisfying\begin{equation}
  \label{eq:condex2}
  p=\displaystyle\sum_{j=1}^p j\alpha_j
\end{equation}
one defines  $V(\alpha_{1}, \ldots,  \alpha_{p})$ to be the locally closed subset of reduced curves in $|H_0|$ not passing through the singular locus of $\X$ and containing exactly
$\alpha_{j}$  chains of length $2j-1$, for $j=1,\ldots,p$.
\end{defn}

By \eqref{eq:chain-pair}
we have a morphism
\begin{equation} \label{eq:alphas}
\xymatrix{
\nu: V(\alpha_{1}, \ldots,  \alpha_{p}) \ar[r] &  
\displaystyle\prod_{j=1}^p \Sym^{\alpha_j}|jL_2-(j-1)L_{1}|,
} 
\end{equation}
which is proved to be bijective in \cite[Proof of Prop. 5.2]{CK}. Indeed, its inverse is given in the following way: the coordinates of any member of the target uniquely defines a chain, and the intersection between the union of the lines in $|\f_2|$ of these chains and $E$ is an effective divisor lying in $|(\sum j \alpha_j)L_2|=|pL_2|=|\O_E(1)|$, by \eqref{eq:condex2}. This can be extended to a unique hyperplane section of $R$, which necessarily contains $\s_2$, so it gives a curve in $V(\alpha_{1}, \ldots,  \alpha_{p})$.
As the dimension of the target in \eqref{eq:alphas} is
\begin{equation}
  \label{eq:genere0}
  g:=\sum_{j=1}^p\alpha_j,
\end{equation}
we see that
$V(\alpha_{1}, \ldots,  \alpha_{p})$
is $g$-dimensional.

Any curve $C$ in $V(\alpha_{1}, \ldots,  \alpha_{p})$ is nodal, and we  denote as above by $\Gamma$ the section of the ruling on $R_1$ contained  in $C$ (i.e., $C$ is the union of $\Gamma$ and of  chains).
We will call $\Gamma$ the {\it sectional component of $C$}. 
The curve $C$ has
\begin{equation}
  \label{eq:numeronodi}
  \delta=\delta(\alpha_{1}, \ldots,  \alpha_{p}):= 
\displaystyle\sum_{j=1}^p (j-1) \alpha_{j} 
\end{equation}
nodes lying off $E$, which we call the 
{\it marked nodes} of $C$, and they all lie on  its sectional component $\Gamma$. In Figure \ref{fig:1} they are marked with circles. We note that \eqref{eq:condex2}, \eqref{eq:genere0} and \eqref{eq:numeronodi} yield 
\begin{equation}
  \label{eq:genere}
  g=p-\delta.
\end{equation}
It is proved in \cite[Prop. 5.2]{CK} that $V(\alpha_{1}, \ldots,  \alpha_{p})$
is a component of the flat limit of $V_{| H_t|,\delta}$.

\subsection{Limit Severi varieties of curves whose normalization carry a $g^1_k$}\label{subsec:Vk}
We now describe limit curves of $V^k_{|H_t|,\delta}$ and compute the dimension of the component they define, following \cite[\S 6]{CK}.

As in Figure \ref{fig:2}, we denote by $\widetilde{C}$ the partial normalization of $C$ at its $\delta$ marked nodes and by $C'$ the stable model of $\widetilde{C}$, obtained by contracting all chains of $C$. 
We already noted  that  the distinguished pair of points of each chain contained in $C$ lies on $\Gamma$. There are $g$ such pairs.  
Thus, $C'$ is equivalently the $g$-nodal curve obtained by identifying each pair of distinguished pair of points on $\Gamma$, as shown in Figure \ref{fig:2}. We note that the natural map $\Gamma \to C'$ is the normalization, and that a degree $k$ cover $\Gamma \to \PP^1$ factors through $C'$ if and only if all
distinguished pairs of points lie in the same fibers of $\Gamma \to \PP^1$. This motivates the following definition.  

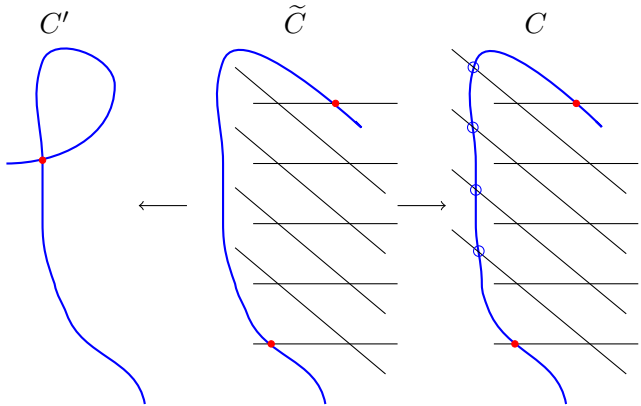
\begin{figure} 
\begin{center}
\begin{tikzpicture}[>=stealth,scale=0.8]
\draw[thick,blue] 
 (0.5,-1) to[out=100,in=-70]
(-0.8,0.6)  to[out=110,in=-90]
(-0.9,1)  to[out=90,in=-90]
(-1,2)  to[out=90,in=-90]
(-1,3)  to[out=90,in=-110]
(-1,4.7) to[out=70,in=-45]
(1,3.7);

\draw (0,5) node[above] {$C$};

\draw (-0.7,0) -- (1.7,0);
\draw (-0.7,1) -- (1.7,1);
\draw (-0.7,2) -- (1.7,2);
\draw (-0.7,3) -- (1.7,3);
\draw (-0.7,4) -- (1.7,4);

\draw (1.5,-0.5) -- (-1.4,1.9);
\draw (1.5,0.5) -- (-1.4,2.9);
\draw (1.5,1.5) -- (-1.4,3.9);
\draw (1.5,2.5) -- (-1.4,4.9);

\draw[blue] (-0.95,1.543) circle [radius=2.5pt];
\draw[blue] (-1,2.56) circle [radius=2.5pt];
\draw[blue] (-1.05,3.6) circle [radius=2.5pt];
\draw[blue] (-1.04,4.6) circle [radius=2.5pt];

\draw[red,fill] (-0.35,0) circle [radius=1.5pt];
\draw[red,fill] (0.67,4) circle [radius=1.5pt];

\draw[-to]  (-2.3,2.3) -- (-1.5,2.3);

\draw[thick,blue] 
 (-3.5,-1) to[out=100,in=-70]
(-4.8,0.5)  to[out=110,in=-80]
(-5,1)  to[out=110,in=-90]
(-5.2,2)  to[out=90,in=-90]
(-5.2,3)  to[out=90,in=-110]
(-5.2,4.7) to[out=70,in=-45]
(-3,3.7);

\draw (-4,5) node[above] {$\widetilde{C}$};

\draw (-4.7,0) -- (-2.3,0);
\draw (-4.7,1) -- (-2.3,1);
\draw (-4.7,2) -- (-2.3,2);
\draw (-4.7,3) -- (-2.3,3);
\draw (-4.7,4) -- (-2.3,4);

\draw (-2.5,-0.5) -- (-5,1.6);
\draw (-2.5,0.5) -- (-5,2.6);
\draw (-2.5,1.5) -- (-5,3.6);
\draw (-2.5,2.5) -- (-5,4.6);

\draw[red,fill] (-4.4,0) circle [radius=1.5pt];
\draw[red,fill] (-3.33,4) circle [radius=1.5pt];

\draw[to-]  (-6.6,2.3) -- (-5.8,2.3);


\draw (-8,5) node[above] {$C'$};
\draw[thick,blue] 
 (-6.5,-1) to[out=100,in=-70]
(-7.8,0.5)  to[out=110,in=-80]
(-8,1)  to[out=110,in=-90]
(-8.2,2)  to[out=90,in=-90]
(-8.2,3)  to[out=90,in=-110]
(-8.2,4.7) to[out=70,in=90]
(-7,4.3) to[out=-90,in=0]
(-8.8,3)
;

\draw[red,fill] (-8.2,3.055) circle [radius=1.5pt];

\end{tikzpicture}

\caption{The partial normalization $\widetilde{C}$ of $C$ and its stable model $C'$, around a chain}\label{fig:2}
\end{center}

\end{figure}

\begin{defn} \label{def:base}
  Let $C \in V(\alpha_1,\ldots,\alpha_p)$.   We say that a $g^1_k$ on the sectional component $\Gamma$ of $C$ is a {\it descending $g^1_k$} if all distinguished pairs of points of each chain contained in $C$ belong to divisors of the $g^1_{k}$.

  We denote by $G^1_k(C')$ the variety of all descending $g^1_k$s on $\Gamma$ and by $\G^k(\alpha_1,\ldots,\alpha_p)$ the variety parametrizing pairs $(C,\mathfrak{g})$ such that $C \in V(\alpha_1,\ldots,\alpha_p)$ and $\mathfrak{g} \in G^1_k(C')$. 

  We denote by $V^k(\alpha_1,\ldots,\alpha_p)$ the image of the forgetful map
  \[ \sigma: \G^k(\alpha_1,\ldots,\alpha_p) \longrightarrow V(\alpha_1,\ldots,\alpha_p) \]
  (which has fibers
  $G^1_k(C')$).  
\end{defn}

By what we said prior to the definition, $C$ lies in $V^k(\alpha_1,\ldots,\alpha_p)$ if and only if $C'$ lies in $\overline{\M^1_{g,k}}$ (see \cite[Lem. 5.3]{CK}).

In \cite[Prop. 6.5 and Cor. 6.6]{CK} the following is proved:
\begin{prop} \label{prop:CK6.5}
  If   \eqref{eq:condex2} and
\begin{equation}
\label{eq:condex3} \alpha_{j} \leq 2(k-1) \; \; \mbox{for all} \; \; j 
\end{equation}
hold, the variety 
$V^k( \alpha_1,\ldots,\alpha_p)$
is nonempty and irreducible of dimension $\min\{2(k-1),g\}$, and is contained in the limit of
$V^k_{|H_t|,\delta}$ as $t$ tends to $0$. 
\end{prop}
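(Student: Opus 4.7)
The plan is to compute the dimension and irreducibility of $V^k(\alpha_1,\ldots,\alpha_p)$ by analyzing the incidence variety $\G^k(\alpha_1,\ldots,\alpha_p)$ of Definition \ref{def:base} via its two projections: the forgetful map $\sigma$ onto $V(\alpha_1,\ldots,\alpha_p)$ (whose image is $V^k$) and the map $\tau:(C,\mathfrak{g})\mapsto C_{\mathfrak{g}}$ to $\F_k$ (Definition \ref{def:Fk}).

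The key first step is to transport the problem to $\PP^2=\Sym^2(\Gamma)$. Via the bijection \eqref{eq:alphas}, $V(\alpha_1,\ldots,\alpha_p)$ is identified with $\prod_j\Sym^{\alpha_j}|jL_2-(j-1)L_1|$, and since the distinguished pair of a chain of length $2j-1$ lies on $\Gamma\cong\PP^1$, a direct computation using the bidegree $(2,2)$ realization $E\subset\PP^1\times\PP^1$ shows that the pushforward via $f:E\to\Gamma$ of the pencil $|jL_2-(j-1)L_1|$ traces out a smooth conic $\ell_j\subset\PP^2$ for generic $L_1,L_2$. In this language the descending condition becomes: $C_{\mathfrak{g}}$, a degree $k-1$ curve in $\PP^2$, passes through the $g=\sum\alpha_j$ distinguished pairs, with those of type $j$ lying on $\ell_j$.

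The dimension of $\G^k$ can then be read off from $\tau$. Since $C_{\mathfrak{g}}\cdot\ell_j=2(k-1)$ by Bezout and $\alpha_j\leq 2(k-1)$ by hypothesis, the $\tau$-fiber over a generic $C_{\mathfrak{g}}$ is the nonempty finite set $\prod_j\Sym^{\alpha_j}(C_{\mathfrak{g}}\cap\ell_j)$, yielding $\dim\G^k=\dim\F_k=2(k-1)$. Projecting instead via $\sigma$, the fiber $G^1_k(C')$ over a generic $C\in V^k$ has dimension $\max\{0,2(k-1)-g\}$, since the $g$ distinguished pairs impose generically independent conditions on $\F_k$. Combining, $\dim V^k=2(k-1)-\max\{0,2(k-1)-g\}=\min\{g,2(k-1)\}$. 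Irreducibility of $V^k$ is then deduced from that of $\G^k$, which reduces to a monodromy argument: $\F_k\cong\mathbb{G}(1,k)$ is irreducible, and as $C_{\mathfrak{g}}$ varies in $\F_k$, the monodromy on the $2(k-1)$ points of each $C_{\mathfrak{g}}\cap\ell_j$ is the full symmetric group, hence transitive on unordered $\alpha_j$-subsets and on the product over $j$.

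Finally, for the inclusion $V^k(\alpha_1,\ldots,\alpha_p)\subset\lim_{t\to 0}V^k_{|H_t|,\delta}$, the plan is to combine (a) the inclusion $V(\alpha_1,\ldots,\alpha_p)\subset\lim_{t\to 0}V_{|H_t|,\delta}$ from \cite[Prop.~5.2]{CK}, and (b) the theory of admissible covers (\cite{HMu,FP,ACV,CMR}), by which a descending $g^1_k$ on the sectional component $\Gamma$ corresponds to an admissible $g^1_k$ on $C$ that smooths flatly to a $g^1_k$ on the normalization of nearby smooth curves $C_t\in V_{|H_t|,\delta}$, placing them in $V^k_{|H_t|,\delta}$. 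The hardest part of the argument is the monodromy step underlying irreducibility of $\G^k$, together with the careful treatment of the exceptional strata of $\F_k$ where some $\ell_j$ becomes a component of $C_{\mathfrak{g}}$: these strata have strictly lower dimension by Lemma \ref{lemma:Cgred} and the genericity of $L_1,L_2$, so they contribute no spurious components to $V^k$.
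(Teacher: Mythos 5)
Your construction of $V^k(\alpha_1,\ldots,\alpha_p)$ --- transporting the descending condition to $\Sym^2(\PP^1)\cong\PP^2$, intersecting the conics $\mathfrak{c}_j$ (your $\ell_j$) with the degree $k-1$ curves $C_{\mathfrak{g}}$, and reading off $\dim V^k(\alpha_1,\ldots,\alpha_p)=\min\{2(k-1),g\}$ from the two projections of the incidence variety as in \eqref{eq:natmaps} --- is exactly the route the paper follows in \S\ref{subsec:Vk} (after \cite{CK}), including the use of the hypothesis $\alpha_j\le 2(k-1)$ and of the transversality statement. Two small slips that do not affect the count: the $\tau$-fiber consists of choices of $\alpha_j$ \emph{distinct} points of $C_{\mathfrak{g}}\cap\mathfrak{c}_j$ rather than multisets, and the fact that no $\mathfrak{c}_j$ is a component of a general $C_{\mathfrak{g}}$ is the content of Lemma \ref{lemma:sobs} (combined with Lemma \ref{lemma:Cgred}), not of Lemma \ref{lemma:Cgred} alone.

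The genuine gap is in the last step, the inclusion $V^k(\alpha_1,\ldots,\alpha_p)\subset\lim_{t\to 0}V^k_{|H_t|,\delta}$. You assert that the descending $g^1_k$ on $\Gamma$ gives an admissible cover of the stable model $C'$ which ``smooths flatly to a $g^1_k$ on the normalization of nearby smooth curves $C_t\in V_{|H_t|,\delta}$.'' Smoothability of admissible covers only tells you that $[C']$ lies in $\overline{\M^1_{g,k}}$, i.e.\ that \emph{some} one-parameter deformation of $C'$ in $\overline{\M_g}$ consists of $k$-gonal curves; it does not let you prescribe that deformation to be the family $t\mapsto [C_t^{\nu}]$ induced by the Severi degeneration $\pi_\delta:\V_\delta\to\DD$ of \cite[Prop.~5.2]{CK}, which is exactly what must be shown. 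The missing ingredient is a lower bound on the dimensions of the components of $\sigma_\delta^{-1}(\overline{\M^1_{g,k}})\subset\V_\delta$ (cf.\ \eqref{eq:dagger}): since the $k$-gonal locus is a Brill--Noether degeneracy locus of codimension $\max\{0,-\rho(g,1,k)\}=\max\{0,g-2(k-1)\}$ in $\overline{\M_g}$, every component of its preimage in the $(g+1)$-dimensional $\V_\delta$ has dimension at least $\min\{2(k-1),g\}+1$. Because the central fibre of $\pi_\delta$ is a Cartier divisor and your computation gives $\dim V^k(\alpha_1,\ldots,\alpha_p)=\min\{2(k-1),g\}$ exactly, no component of that preimage containing $V^k(\alpha_1,\ldots,\alpha_p)$ can be contained in the central fibre; it therefore dominates $\DD$, which is what places $V^k(\alpha_1,\ldots,\alpha_p)$ in the limit of $V^k_{|H_t|,\delta}$. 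Without this count (or an equivalent properness-plus-dimension argument on the Hurwitz side), the final clause of the proposition is not established.
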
 

It is also proved in \cite{CK} that there are $(\alpha_1,\ldots,\alpha_p)$ satisfying conditions \eqref{eq:condex2} and \eqref{eq:condex3} whenever \eqref{eq:boundintro} is satisfied (cf. \cite[Prop. 7.1]{CK}).

 To prove Proposition \ref{prop:CK6.5}, the variety 
$V^k( \alpha_1,\ldots,\alpha_p)$ is explicitly constructed in the following way.   
Let $f:  E \rightarrow  \PP^1$ be the morphism determined  by  $\vert L_1\vert$
and  $f^{(2)}: \Sym^2(E) \rightarrow \Sym^2(\PP^1)$ the induced  map.
We identify $\Sym^2(\PP^1)$ with  $\PP^2$ as in the previous section.
We denote by  $\mathfrak{c}_{j}$, for $1\le j \le n$, the image via $f^{(2)}$ of the smooth rational curves in $\Sym^2(E)$ defined by the 
pencils $|jL_{2}-(j-1)L_{1}|$. These are distinct conics, each intersecting the diagonal $\Delta$ in four distinct points corresponding to the ramification points of the pencils. Recall from the previous section that to any each $\mathfrak{g} \in G^1_k(\PP^1)$ we associate the degree $k-1$ curve in $\PP^ 2=\Sym^2(\PP^1)$ given by $ C_{\mathfrak{g}}= \{ W \in \Sym^2(\PP^1) \; | \; \mathfrak{g}(-W) \geq 0\}$
and that we denote the family of such curves by $\F_k$ (cf. Definition \ref{def:Fk}).

We state here a stronger version of \cite[Lem. 6.3]{CK}, as we will need it later to prove our main results:

\begin{lemma} \label{lemma:sobs}
  Fix a reduced  curve $C_{\mathfrak{g}}$ in $\F_k$. Then, for general $L_2 \in \Pic^2(E)$, the curve 
  $C_{\mathfrak{g}}$ intersects each $\mathfrak{c}_{j}$ transversally in $2(k-1)$ distinct points. Moreover, varying $C_{\mathfrak{g}}$ in $\F_k$, none of these points  are fixed.
\end{lemma}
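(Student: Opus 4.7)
My plan is to prove the lemma in three steps, treating the intersection-theoretic count, set-theoretic transversality, and non-fixedness separately. By the identification in \eqref{eq:isom-}, the curve $C_{\mathfrak{g}}$ has degree $k-1$ in $\PP^2 = \Sym^2(\PP^1)$. Since each $\mathfrak{c}_j$ is a conic, B\'ezout immediately gives that $C_{\mathfrak{g}} \cap \mathfrak{c}_j$ has length $2(k-1)$ as a $0$-cycle in $\PP^2$.

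To establish set-theoretic transversality for generic $L_2$, I would introduce the incidence variety
\[
\mathcal{I}_j := \{(W, L_2) \in \PP^2 \times \Pic^2(E) \,:\, W \in C_{\mathfrak{g}} \cap \mathfrak{c}_j(L_2)\}.
\]
For a fixed $W = u+v \in C_{\mathfrak{g}}$, the condition $W \in \mathfrak{c}_j(L_2)$ translates to $jL_2 \sim u'+v' + (j-1)L_1$ in $\Pic^{2j}(E)$ for some preimage $u'+v' \in \Sym^2(E)$ of $W$ under $f^{(2)}$; there are at most four such preimages, and for each the equation determines $L_2$ up to a $j$-torsion element of $\Pic^0(E)$, so only finitely many $L_2$ satisfy this. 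Combined with B\'ezout (non-empty fibers over $\Pic^2(E)$), this yields $\dim \mathcal{I}_j = 1$ and makes the second projection $\mathcal{I}_j \to \Pic^2(E)$ a finite surjective morphism of degree $2(k-1)$. Transversality for generic $L_2$ is equivalent to this cover being generically \'etale, which in characteristic zero reduces to $\mathcal{I}_j$ being generically reduced. Since non-transversality is a closed condition on $L_2$, it then suffices to exhibit one $L_2$ giving a transverse intersection. I would do so by noting that as $L_2$ varies, the four points $\mathfrak{c}_j(L_2) \cap \Delta$ (the images of the ramification points of the pencil $|jL_2-(j-1)L_1|$) genuinely move, so the $1$-parameter family $\{\mathfrak{c}_j(L_2)\}$ is non-constant in the $\PP^5$ of plane conics; unless this family lies entirely inside the codimension-one tangent hypersurface of $C_{\mathfrak{g}}$ in $\PP^5$, some member meets $C_{\mathfrak{g}}$ transversally, and this pathology can be ruled out by direct specialization of $L_2$.

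For the non-fixedness claim I would identify $G^1_k(\PP^1)$ with the Grassmannian $\Gr(2, H^0(\O_{\PP^1}(k)))$ of dimension $2(k-1)$. Under this identification, the condition $W_0 = u+v \in C_{\mathfrak{g}}$ is equivalent to the $2$-plane $V_{\mathfrak{g}} \subset H^0(\O_{\PP^1}(k))$ meeting the codimension-two subspace of sections vanishing at both $u$ and $v$ non-trivially, which cuts out a Schubert hypersurface in $\Gr(2,k+1)$, hence a proper subvariety of $\F_k$. Therefore, for each $W_0 \in \mathfrak{c}_j$ some pencil $\mathfrak{g}' \in G^1_k(\PP^1)$ does not contain $W_0$ as a sub-divisor, proving that no intersection point is fixed as $\mathfrak{g}$ varies in $\F_k$.

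The main obstacle will be the transversality step, specifically ruling out the degenerate scenario that every conic in the $1$-parameter family $\{\mathfrak{c}_j(L_2)\}_{L_2}$ is tangent to $C_{\mathfrak{g}}$; I expect to handle this by an explicit calculation at a specific or limiting value of $L_2$ (for example $L_2 \to L_1$, where the pencil degenerates to $|L_1|$ and $\mathfrak{c}_j$ reduces to a computable conic).
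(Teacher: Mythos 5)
Your Steps 1 and 3 are fine: B\'ezout (together with your incidence-variety argument showing that for fixed $W$ only finitely many $L_2$ put $W$ on $\mathfrak{c}_j$, so that $\mathfrak{c}_j \not\subset C_{\mathfrak{g}}$ for general $L_2$) gives the length count, and the Schubert-hypersurface argument correctly shows that no point of $\PP^2$ lies on every $C_{\mathfrak{g}}$, which is the paper's non-fixedness claim. The problem is Step 2, which is the heart of the lemma and which you yourself flag as the main obstacle without resolving it. Your reduction --- non-transversality is closed in $L_2$, so it suffices to exhibit one $L_2$ with transverse intersection --- is legitimate, but the one concrete candidate you propose, the specialization $L_2 \to L_1$, does not work. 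At $L_2 = L_1$ every $\mathfrak{c}_j$ degenerates to the diagonal conic $\Delta$, and $C_{\mathfrak{g}}$ meets $\Delta$ at the cycles $2x$ with $x$ a ramification or base point of $\mathfrak{g}$, with local intersection multiplicity $e_x - 1$ at a ramification point of index $e_x$ (consistent with $C_{\mathfrak{g}} \cdot \Delta = 2(k-1)$ and Riemann--Hurwitz). Since the lemma is stated for an \emph{arbitrary} fixed reduced $C_{\mathfrak{g}}$ --- which by Lemma \ref{lemma:Cgred} may come from a pencil with simple base points or non-simple ramification, and may be singular --- this specialization is tangent to $C_{\mathfrak{g}}$ (or passes through its singular points) in general, so it cannot certify transversality. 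You are left having to rule out that the whole $1$-parameter family $\{\mathfrak{c}_j(L_2)\}$ lies in the hypersurface of conics tangent to (or passing through a singular point of) $C_{\mathfrak{g}}$, and a non-constant curve in the $\PP^5$ of conics can perfectly well lie in a hypersurface; no argument excluding this is given.

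For comparison, the paper sidesteps the need to name a transverse member by working upstairs on $\Sym^2(E)$: each $\mathfrak{c}_j$ is the image under $f^{(2)}$ of a fiber of the Albanese map $\mathrm{alb}:\Sym^2(E)\to E$, and as $L_2$ varies over $\Pic^2(E)$ the corresponding fibers sweep out \emph{all} fibers of $\mathrm{alb}$ (because $L_2 \mapsto jL_2-(j-1)L_1$ is surjective onto $\Pic^2(E)$). Generic smoothness applied to $\mathrm{alb}$ restricted to the fixed curve $(f^{(2)})^{-1}C_{\mathfrak{g}}$ then gives that the \emph{general} fiber meets it transversally and away from the ramification locus of $f^{(2)}$, which descends to transversality of $C_{\mathfrak{g}} \cap \mathfrak{c}_j$ in $2(k-1)$ distinct points. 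If you want to salvage your downstairs approach, you would need to replace the specialization $L_2=L_1$ by an argument of this Bertini type, exploiting that the family of conics (or its pullback) covers the surface rather than merely being non-constant.
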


\begin{proof}
  The fibers of the Albanese (summation) map $ \mbox{alb}:\Sym^2(E) \to E$ are the one-dimensional family of $g^1_2$s on $E$, and each curve $\mathfrak{c}_{j}$ is the image of one such fiber. Varying $L_2 \in  \Pic^2(E)$ the fibers sent to $\mathfrak{c}_{j}$ cover all fibers of $\mbox{alb}$, so that $\mathfrak{c}_{j}$ becomes the image of a general fiber.  Each pair of fibers of $a$ moves in a base point free linear system, whence $(f^{(2)})^{-1}C_{\mathfrak{g}}$ has transversal intersection with the general fiber of $\mbox{alb}$ and the intersection occurs outside of the ramification locus of $f^{(2)}$. This proves that also the intersection $C_{\mathfrak{g}} \cap \mathfrak{c}_{j}$ is transversal.

  The case $L_2=L_1$ yields $\mathfrak{c}_j=\Delta$ for all $j$. Varying $\mathfrak{g}$, none of the points in $C_{\mathfrak{g}} \cap \Delta$ are fixed, proving the last assertion also for general $L_2$. 
\end{proof}

By   \eqref{eq:condex3}  we can pick  $\alpha_{j}$ of the intersection points $C_{\mathfrak{g}} \cap \mathfrak{c}_{j}$. Each of these $\alpha_{j}$ intersection points gives rise to a chain of length $2j-1$ with distinguished pair given by the
$2$-cycle corresponding to the point itself (recall \eqref{eq:chain-pair}) and the one-to-one correspondence \eqref{eq:alphas} yields the existence of curves in 
$V( \alpha_1,\ldots,\alpha_p)$ containing all the above chains, which 
lie in $V^k( \alpha_1,\ldots,\alpha_p)$ by construction.
In other words, we have natural surjective maps
\begin{equation} \label{eq:natmaps}
  \xymatrix{
    \G^k( \alpha_1,\ldots,\alpha_p) \ar[r]^{\sigma_k} \ar[d]_{\gamma_k}& V^k( \alpha_1,\ldots,\alpha_p) \\
    \F_k, &
  }
  \end{equation}
where $\sigma_k$ is the forgetful map and $\gamma_k$ is finite. The fiber of $\gamma_k$ over a curve $C_{\mathfrak{g}}$ is given by all the choices of $\alpha_j$ of the intersection points in each $C_{\mathfrak{g}} \cap \mathfrak{c}_{j}$.

When $g=\sum \alpha_i >2(k-1)$, this is used in \cite{CK} to prove that $V^k( \alpha_1,\ldots,\alpha_p)$ is irreducible (and nonempty) of dimension $2(k-1)$ and that its general element corresponds to finitely many of the curves $C_{\mathfrak{g}}$. When $g \leq 2(k-1)$, one has $V^k( \alpha_1,\ldots,\alpha_p)=
V( \alpha_1,\ldots,\alpha_p)$ and its general element corresponds to a family of dimension $2(k-1)-\sum\alpha_{j}=2(k-1)-g=\rho(g,1,k)$ of the curves $C_{\mathfrak{g}}$. This also shows that the generic fiber dimension of $\sigma_k$ is $\max\{0,\rho(g,1,k)\}$, in other words that  $\dim G^1_k(C')=\max\{0,\rho(g,1,k)\}$ 
for general $C$ in $V^k( \alpha_1,\ldots,\alpha_p)$. 

We will also make use of the following observation, implicitly used in \cite[Proof of Cor. 6.6(iv)]{CK}:

\begin{lemma} \label{lemma:NN}
If $\mathfrak{g}$ is base point free, then any corresponding curve in $V^k( \alpha_1,\ldots,\alpha_p)$ has the property that the stable model of a partial normalization of $C$ at fewer than its $\delta$ marked nodes does not lie in $\overline{\M^1_{g,k}}$.
\end{lemma}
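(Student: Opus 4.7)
The plan is to compute the arithmetic genus of the stable model of the partial normalization and show it exceeds $g$, which will automatically place the stable model outside $\overline{\M_g}\supseteq \overline{\M^1_{g,k}}$.

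Fix a subset $S$ of the marked nodes of $C$ with $|S|<\delta$, let $\widetilde{C}_S$ denote the partial normalization of $C$ at the nodes in $S$, and let $C^*$ be its stable model. First, I will use the base point freeness of $\mathfrak{g}$ together with Lemmas~\ref{lemma:Cgred} and~\ref{lemma:sobs} to conclude that $C_{\mathfrak{g}}$ is reduced and meets each conic $\mathfrak{c}_j$ transversally in $2(k-1)$ distinct points. The construction of $C$ recalled in \S\ref{subsec:Vk} then guarantees that $C$ carries exactly $\delta$ simple marked nodes, each realized as an intersection $\Gamma\cap f_{1,j'}$ with some $f_1$-component of a chain.

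Next, I will observe that every marked node is non-separating in $C$: the chain containing it is attached to $\Gamma$ at its distinguished pair $a,b\in E$ (and possibly at other marked nodes of the same chain), so normalizing at the given marked node keeps the chain connected to $\Gamma$. Applying this to the whole subset $S$, the partial normalization $\widetilde{C}_S$ stays connected, and the identity $\chi(\O_{\widetilde{C}_S})=\chi(\O_C)+|S|$ yields
\[
p_a(\widetilde{C}_S)=p_a(C)-|S|=p-|S|.
\]
Since $|S|<\delta$, this gives $p_a(\widetilde{C}_S)>p-\delta=g$.

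Finally, because stable reduction preserves arithmetic genus, $p_a(C^*)>g$, so $[C^*]\in\overline{\M_{g'}}$ for some $g'>g$, and $\overline{\M^1_{g,k}}\subseteq\overline{\M_g}$ is disjoint from $\overline{\M_{g'}}$ whenever $g'\neq g$. The main obstacle will be to confirm that the base point free hypothesis really forces the expected chain structure on $C$ with exactly $\delta$ simple marked nodes lying off $E$; without this, the combinatorial genus count could fail. Once this structural statement is secured, the rest is a direct genus computation combined with the observation that all marked nodes are non-separating.
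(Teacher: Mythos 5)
Your genus count is internally correct: the marked nodes $\Gamma\cap f_{1,j}$ are non-disconnecting (each $f_{1,j}$ stays attached to $\Gamma$ through its chain and the points $a,b$), so the partial normalization at $|S|<\delta$ of them is connected of arithmetic genus $p-|S|>g$, and stabilization preserves arithmetic genus. But all this shows is that the stable model is a stable curve of genus $g'=p-|S|\neq g$, hence not a point of $\overline{\M_g}$ at all — a reading under which the lemma is vacuous and under which, tellingly, the hypothesis that $\mathfrak{g}$ is base point free is never used. That cannot be the content the lemma is meant to carry. It is invoked in the proof of Theorem \ref{thm:main} to establish non-neutrality of the nodes: if a node of a nearby curve $C_t$ were neutral, the pencil $\mathfrak{g}_t$ would descend to the partial normalization of $C_t$ at the remaining nodes, a stable curve of genus $g+1$ lying in $\overline{\M^1_{g+1,k}}$, and its limit at $t=0$ would be the stable model of a partial normalization of $C$ at $\delta-1$ marked nodes, again lying in $\overline{\M^1_{g+1,k}}$. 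So what must actually be excluded is membership in the $k$-gonal locus of the moduli space of the \emph{correct, larger} genus $g'=p-|S|$ — equivalently, the existence of a degree-$k$ admissible cover on the stable model compatible with the limit pencil. Your argument says nothing about this, and no genus computation can, since $\overline{\M^1_{g',k}}$ is a perfectly nonempty locus in $\overline{\M_{g'}}$.

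The paper's proof addresses precisely that point. Keeping a marked node un-normalized means that after stabilization the corresponding component $f_{1,j}$ survives as a rational component meeting $\Gamma$ in three points (the retained marked node together with the images of the two attaching points of its chain). For such a curve to admit a degree-$k$ admissible cover whose restriction to $\Gamma$ is the limit of the $\mathfrak{g}_t$, the induced map $\Gamma\to\PP^1$ would have to have degree strictly less than $k$, i.e.\ the $g^1_k$ $\mathfrak{g}$ on $\Gamma$ would have to have base points — contradicting the hypothesis. This is the missing step, and it is where base point freeness enters. Your closing worry, by contrast, is a non-issue: the chain structure and the count of $\delta$ marked nodes are built into the definition of $V(\alpha_1,\ldots,\alpha_p)$ and are independent of whether $\mathfrak{g}$ has base points.
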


\begin{proof}
Such a partial normalization would need to have at least one more component other than $\Gamma$,  intersecting $\Gamma$ in three points. For the stable model of such a curve to lie in $\overline{\M^1_{g,k}}$, the pencil $\mathfrak{g}$ on $\Gamma$ would need to have base points, since it would have to induce a map from $\Gamma$ to $\PP^1$ of degree $<k$. 
\end{proof}

 Under conditions \eqref{eq:condex2} and \eqref{eq:condex3} we thus have a relative scheme
$\pi_{\delta}: \V_{\delta} \to \DD$ of dimension $\dim \V_{\delta}=g+1$ such that the fiber over $t \neq 0$ is a component of  $V_{|H_t|,\delta}$ (of dimension $g:=p-\delta$) and a subscheme 
$\V^k_{\delta} \subset \V_{\delta}$ such that the fiber over $t \neq 0$ is a component of  $V^k_{|H_t|,\delta}$ of dimension $\min\{2(k-1),g\}$ and the special fiber contains a dense open subset of $V^k( \alpha_1,\ldots,\alpha_p)$. By construction, the map from $\V^k_{\delta}$ sending a curve to the stable model of its normalization at the $\delta$ marked nodes lands in $\overline{\M^1_{g,k}}$,  the compactification of $\M^1_{g,k}$ in the Deligne-Mumford compactification $\overline{\M_g}$ of $\M_g$;  in other words, we have moduli maps
\begin{equation} \label{eq:dagger}
  \xymatrix{
  \V_{\delta}^k  \ar[d] & \subset & \V_{\delta}  \ar[d]^{\sigma_{\delta}}  \\
  \overline{\M^1_{g,k}} & \subset & \overline{\M_{g}}. }
\end{equation}

\section{Proof of Theorem \ref{thm:main}} \label{sec:proofs}

The theorem is trivial in the case $p=2$, so we will assume that $p\geq 3$.

We will prove Theorem \ref{thm:main} by degeneration, following the structure of the proof of Theorem \ref{thm:mainCK} as reviewed in \S \ref{sec:rew}. We will use the same degeneration of the K3 surface $(S,H)$ to $(S_0,H_0)$, and of the Severi variety $V_{|H|,\delta}$ to a locus containing an irreducible component $V(\alpha_{1}, \ldots,  \alpha_{p})$. We will introduce a pointed version $V_n(\alpha_1,\ldots,\alpha_p)$ of $V(\alpha_{1}, \ldots,  \alpha_{p})$, in order to cut out subloci $V^k_{n,\e}(\alpha_1,\ldots,\alpha_p)$ by imposing fixed ramification at the marked points. We will study these subloci by reducing the problem to the study of $g^1_k$s on $\bP^1$ with prescribed ramification and with the same compatibility conditions as in \S \ref{sec:rew}, that is, distinguished pairs of points must belong to a divisor of the $g^1_k$.

Recall \eqref{eq:dagger}. 
Denoting by $\V_{\delta,n}$ the locus of $n$-pointed curves $(C,x_1,\ldots, x_n)$, with $[C] \in \V_{\delta}$,  where the  marked points are disjoint from the nodes of the curves, we have $\dim \V_{\delta,n}=g+n+1$ and a relative scheme
\[ \pi_{\delta,n}: \V_{\delta,n}  \longrightarrow  \DD\]
and 
moduli map
\[ \sigma_{\delta,n}: \V_{\delta,n} \longrightarrow  \overline{\M_{g,n}}.\]
We define $\V^k_{\delta,\e}:=\sigma_{\delta,n}^{-1}(\overline{\M_{g,k,\e}})$.

To prove the first assertion in Theorem \ref{thm:main}, we would like to prove that $\V^k_{\delta,\e} \cap \pi_{\delta,n}^{-1}(t)$ is nonempty and has a component of the expected codimension for general $t$. Since $\M_{g,k,\e}$ has a component of the expected codimension by Theorem \ref{thm:main_mod}, it suffices to prove that $\V^k_{\delta,\e} \cap \pi_{\delta,n}^{-1}(0)$ is nonempty and has a component of the expected codimension. We will find such a component in Proposition \ref{prop:bordo} below.

\begin{defn} \label{def:lughi}
  We define $V_n(\alpha_1,\ldots,\alpha_p)$ to be the variety parametrizing $(C,x_1,\ldots,x_n)$ such that $C \in V( \alpha_1,\ldots,\alpha_p)$ and $x_1,\ldots, x_n$ are distinct points on the sectional component $\Gamma$  of $C$ different from the nodes of $C$. 
  
We denote by $G^1_k(C',(x_1,e_1),\ldots,(x_n,e_n))$ the variety parametrizing $\mathfrak{g} \in G^1_k(C')$ (i.e.
$\mathfrak{g}$ is a descending $g^1_k$ on $\Gamma$) with ramification of order $e_i \geq 2$ at each $x_i$.

We define $\G^k_{n,\e}( \alpha_1,\ldots,\alpha_p)$ to be the variety parametrizing  pairs $\Big(\left(C,x_1,\ldots,x_n\right),\mathfrak{g}\Big)$ such that $(C,x_1,\ldots,x_n) \in V_{n,\e}( \alpha_1,\ldots,\alpha_p)$ and $\mathfrak{g} \in G^1_k(C',(x_1,e_1),\ldots,(x_n,e_n))$. 

We define  $V^k_{n,\e}(\alpha_1,\ldots,\alpha_p)$ to be the image of the forgetful map
\[ \sigma_{k,n,\e}: \G^k_{n,\e}( \alpha_1,\ldots,\alpha_p) \to V_{n}(\alpha_1,\ldots,\alpha_p), \]
which has fibers $G^1_k(C',(x_1,e_1),\ldots,(x_n,e_n))$.  
\end{defn}

\begin{prop} \label{prop:bordo}
  Assume that \eqref{eq:condex2} and \eqref{eq:condex3} hold, as well as
  $\widetilde{\rho}(g,1,k;\e) \geq -g$.
Then $V^k_{n,\e}(\alpha_1,\ldots,\alpha_p)$ is equidimensional of codimension $\max\{0,-\widetilde{\rho}\}$.

More precisely,
\begin{itemize}
\item[(i)] if $\widetilde{\rho}\geq 0$, then $V^k_{n,\e}(\alpha_1,\ldots,\alpha_p)=V_{n}(\alpha_1,\ldots,\alpha_p)$ and for the general element $(C,x_1,\ldots,x_n) \in V_{n}(\alpha_1,\ldots,\alpha_p)$ the variety $G^1_k(C',(x_1,e_1),\ldots,(x_n,e_n))$ has dimension $\widetilde{\rho}$ and the general member in any component satisfies $(\star)$ (cf. Notation \ref{not:star});
\item[(ii)] if $\widetilde{\rho}< 0$, then $V^k_{n,\e}(\alpha_1,\ldots,\alpha_p)$ is a proper subvariety of $V_{n}(\alpha_1,\ldots,\alpha_p)$ of dimension
  $g+n+\widetilde{\rho}$, and for general $(C,x_1,\ldots,x_n)$  in any component of $V^k_{n,\e}(\alpha_1,\ldots,\alpha_p)$  the variety $G^1_k(C',(x_1,e_1),\ldots,(x_n,e_n))$ is $0$-dimensional and any member satisfies $(\star)$;  moreover,
  \begin{itemize}
  \item[(ii-a)] if $n+\widetilde{\rho}\geq 0$, then $V^k_{n,\e}(\alpha_1,\ldots,\alpha_p)$ dominates $V(\alpha_1,\ldots,\alpha_p)$ via the forgetful map, and
  \item[(ii-b)] if $n+\widetilde{\rho}< 0$, then the forgetful map is generically finite.
  \end{itemize}
\end{itemize}
\end{prop}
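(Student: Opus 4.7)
The plan is to extend the degeneration argument of Theorem \ref{thm:main_mod} to the limit K3 setting reviewed in \S\ref{sec:rew}. Concretely, elements of $\G^k_{n,\e}(\alpha_1,\ldots,\alpha_p)$ are produced by choosing a pencil $\mathfrak{g}$ on the sectional component $\Gamma\cong\PP^1$ with ramification $e_i$ at marked points $x_i\in\Gamma$, together with a selection, among the intersection points of the associated curve $C_\mathfrak{g}\in\F_k$ with the conics $\mathfrak{c}_j\subset\Sym^2(\PP^1)$, of the distinguished pairs that determine the $\alpha_j$ chains of length $2j-1$ of the resulting curve $C\in V(\alpha_1,\ldots,\alpha_p)$ via the bijection \eqref{eq:alphas}. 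This is the natural ramified refinement of the construction used to prove Proposition \ref{prop:CK6.5}.

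To implement this, fix general distinct $x_1,\ldots,x_n\in\Gamma$. The hypothesis $\widetilde{\rho}(g,1,k;\e)\geq -g$ is equivalent to $\widetilde{\rho}(0,1,k;\e)\geq 0$, so Lemma \ref{lemma:P1suP1} yields that $G^1_k(\PP^1,(x_1,e_1),\ldots,(x_n,e_n))$ is nonempty and equidimensional of dimension $\widetilde{\rho}+g$, with general member satisfying $(\star)$. The corresponding subfamily $\F_{k,(x_1,\ldots,x_n),\e}\subset\F_k$ therefore has dimension $\widetilde{\rho}+g$, with general member reduced by Lemma \ref{lemma:Cgred}. For $L_2\in\Pic^2(E)$ general, Lemma \ref{lemma:sobs} ensures that each such general $C_\mathfrak{g}$ meets every $\mathfrak{c}_j$ transversally in $2(k-1)$ distinct, non-fixed points; by \eqref{eq:condex3} we may pick $\alpha_j$ of these on each $\mathfrak{c}_j$, and \eqref{eq:alphas} then produces a unique $C\in V(\alpha_1,\ldots,\alpha_p)$ whose distinguished pairs lie on $C_\mathfrak{g}$, so $\mathfrak{g}$ is a descending $g^1_k$ on $\Gamma$ with the prescribed ramification. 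Varying $(x_i)\in\Gamma^n$, $\mathfrak{g}\in\F_{k,(x_1,\ldots,x_n),\e}$, and the finitely many chain selections produces an irreducible sublocus of $\G^k_{n,\e}(\alpha_1,\ldots,\alpha_p)$ of dimension $n+(\widetilde{\rho}+g)=g+n+\widetilde{\rho}$, surjecting onto a sublocus of $V^k_{n,\e}(\alpha_1,\ldots,\alpha_p)$.

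Next one analyses the generic fibres of $\sigma_{k,n,\e}$. The fibre over $(C,x_1,\ldots,x_n)$ is $G^1_k(C',(x_1,e_1),\ldots,(x_n,e_n))$, cut out of the $(\widetilde{\rho}+g)$-dimensional variety $G^1_k(\Gamma,(x_1,e_1),\ldots,(x_n,e_n))$ by the $g$ descending conditions ``$C_\mathfrak{g}$ contains each distinguished pair of $C$''. The non-fixedness assertion of Lemma \ref{lemma:sobs} combined with the free variability of distinguished pairs permitted by \eqref{eq:alphas} makes these $g$ conditions generically independent, so the generic fibre has dimension $\max\{0,\widetilde{\rho}\}$. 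In case (i), this gives $\dim V^k_{n,\e}\geq g+n+\widetilde{\rho}-\widetilde{\rho}=g+n=\dim V_n$, hence $V^k_{n,\e}=V_n$ by irreducibility of $V_n$, and $(\star)$ propagates by openness from Lemma \ref{lemma:P1suP1}. In case (ii), the generic fibre is $0$-dimensional, so $\dim V^k_{n,\e}=g+n+\widetilde{\rho}$, and finiteness of the fibre combined with openness of $(\star)$ shows every member satisfies $(\star)$.

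For (ii-a) and (ii-b) one examines the forgetful map $V^k_{n,\e}\to V(\alpha_1,\ldots,\alpha_p)$. Fixing a general $C\in V$ and its $g$ distinguished pairs, the analogous dimension count gives that the family of pairs $(x_1,\ldots,x_n,\mathfrak{g})$ with $\mathfrak{g}$ descending to $C'$ and ramified of order $e_i$ at $x_i$ has dimension $\widetilde{\rho}(0,1,k;\e)+n-g=n+\widetilde{\rho}$. If $n+\widetilde{\rho}\geq 0$ this family is nonempty for generic $C$, so the forgetful map is dominant, proving (ii-a); if $n+\widetilde{\rho}<0$ it is empty for generic $C$, hence the image is a proper subvariety of $V$, and a direct comparison of $\dim V^k_{n,\e}=g+n+\widetilde{\rho}$ with this image shows that the map is generically finite, proving (ii-b). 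The main technical obstacle throughout is establishing the generic independence of the $g$ descending conditions alongside the ramification conditions: this is the linchpin of every dimension count and hinges on the ``not fixed'' part of Lemma \ref{lemma:sobs} coupled with the bijection \eqref{eq:alphas}, which together allow arbitrary distinguished pair configurations to be realised and ensure that the incidence correspondence between $\F_k$ and $V(\alpha_1,\ldots,\alpha_p)$ projects with the expected fibre dimensions.
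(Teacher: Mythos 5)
Your proposal follows essentially the same route as the paper: both reduce to the incidence correspondence \eqref{eq:natmapsM} between the ramified families $\F_{k,(x_1,\ldots,x_n),\e}$, $\F_{k,\e}$ and the configurations of distinguished pairs on the conics $\mathfrak{c}_j$, and both rest on Lemmas \ref{lemma:P1suP1}, \ref{lemma:Cgred} and \ref{lemma:sobs} together with the bijection \eqref{eq:alphas} to show that passage through general points of the $\mathfrak{c}_j$ imposes independent conditions. The only (minor) difference is the order of the count: you fix a general pencil and then select intersection points, whereas the paper fixes general points on the conics first and, in the case $\widetilde{\rho}<0$, imposes only $\widetilde{\rho}+g$ of the $g$ conditions to see that the fibre of $\sigma_{k,n,\e}$ over a general point of its \emph{image} is finite rather than empty --- a point your phrase ``the generic fibre has dimension $\max\{0,\widetilde{\rho}\}$'' glosses over but does not invalidate.
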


\begin{proof}
  We follow the lines of the proofs of Theorems \ref{thm:main_mod} and \ref{thm:mainHur}. In particular, we let $\F_{k,(P_1,\ldots,P_n),\e}$ and $\F_{k,\e}$  be the subfamilies of $\F_k$ as defined therein. In those proofs, we considered the subfamilies of curves passing through sets of {\it general points} $\xi_i$ in $\Sym^2(\PP^1)$. Now we will need those points to lie in $\mathfrak{c}_1 \cup \cdots \cup \mathfrak{c}_p$.

We recall that the assumption $\widetilde{\rho} \geq -g$ along with
Lemma \ref{lemma:P1suP1} yields that the locus
 $G^1_k(\PP^1,(P_1,e_1),\ldots,(P_n,e_n))$ is nonempty and equidimensional of dimension $\widetilde{\rho}+g$ for general $P_1,\ldots,P_n \in \PP^1$.  We also need to remark that the general member in any component of the variety 
  $G^1_k(\PP^1,(P_1,e_1),\ldots,(P_n,e_n))$ satisfies $(\star)$ (cf. Notation \ref{not:star} and Remark \ref{rem:main_mod}). In particular, the general member in any component of $\F_{k,(P_1,\ldots,P_n),\e}$ and $\F_{k,\e}$ corresponds to a $g^1_k$ satisfying $(\star)$.
  
Since none of the curves $\mathfrak{c}_{j}$ is contained in a $C_{\mathfrak g}$ by Lemmas \ref{lemma:Cgred} and \ref{lemma:sobs}, the passing through any set of general points on $\mathfrak{c}_1 \cup \cdots \cup \mathfrak{c}_p$ imposes independent conditions on the families $\F_{k,(P_1,\ldots,P_n),\e}$ and $\F_{k,\e}$
and the general members in  each component of  the resulting families intersect each $\mathfrak{c}_j$ transversally in $2(k-1)$ distinct points.

We
have surjective maps that are the marked versions of the ones in \eqref{eq:natmaps}:
\begin{equation} \label{eq:natmapsM}
  \xymatrix{
    \G^k_{n,\e}( \alpha_1,\ldots,\alpha_p) \ar[r]^{\sigma_{k,n,\e}} \ar[d]_{\gamma_{k,n,\e}}& V^k_{n,\e}( \alpha_1,\ldots,\alpha_p) \\
    \F_{k,\e}, &
  }
  \end{equation}
  where, again, $\sigma_{k,n,\e}$ is the forgetful map (forgetting the $g^1_k$) and
  $\gamma_{k,n,\e}$ is finite.

{\bf The case $\widetilde{\rho} \geq 0$.} For a general set of $\alpha_j$ points on each $\mathfrak{c}_j$, the family of curves in $\F_{k,(P_1,\ldots,P_n),\e}$ passing through these points has dimension 
\[
\dim \F_{k,(P_1,\ldots,P_n),\e}- \sum \alpha_j \stackrel{\eqref{eq:dimFkn}}{=}
  \widetilde{\rho}+g-\sum \alpha_j \stackrel{\eqref{eq:genere}}{=} \widetilde{\rho}
\]
and the  general member in any component corresponds to a $g^1_k$ satisfying $(\star)$. This family produces, by the bijection \eqref{eq:alphas}, the same curve $C$ in $V(\alpha_1,\ldots,\alpha_p)$, which lies in $V^k_{n,\e}(\alpha_1,\ldots,\alpha_p)$, with the marked points of ramification being $P_1,\ldots,P_n \in \Gamma \subset C$. This means that the fiber dimension of
$\sigma_{k,n,\e}$ is $\widetilde{\rho}$. 
The total family of pointed curves produced is therefore  of dimension 
\[ \dim \F_{k,\e}-\widetilde{\rho}\stackrel{\eqref{eq:dimFk}}{=} \widetilde{\rho}+g+n-\widetilde{\rho}=g+n,\]
whence it covers all of $V_n(\alpha_1,\ldots,\alpha_p)$. This proves (i). 

{\bf The case $\widetilde{\rho} < 0$.} For any $p$-tuple of nonnegative integers $(\alpha'_1,\ldots,\alpha'_p)$ such that
$\alpha'_i \leq \alpha_i$ for all $i$ and $\sum \alpha'_i=\widetilde{\rho}+g$, and any general choice of $\alpha'_j$ points on each $\mathfrak{c}_j$, the family of curves in $\F_{k,(P_1,\ldots,P_n),\e}$ passing through these points is zero-dimensional by \eqref{eq:dimFkn}, and the curves correspond to $g^1_k$s
satisfying $(\star)$. Hence, as above, this finite family produces, by the bijection \eqref{eq:alphas}, the same curve $C$ in $V(\alpha_1,\ldots,\alpha_p)$, which lies in $V^k_{n,\e}(\alpha_1,\ldots,\alpha_p)$, having finitely many $g^1_k$s with the marked points of ramification being $P_1,\ldots,P_n \in \Gamma \subset C$. This means that $\sigma_{k,n,\e}$ is generically finite. 
Varying $P_1,\ldots, P_n$, the total family of pointed curves produced is now
\[ \dim \F_{k,\e}\stackrel{\eqref{eq:dimFk}}{=} \widetilde{\rho}+g+n,\]
and equals $V^k_{n,\e}(\alpha_1,\ldots,\alpha_p)$. This proves the first assertion in (ii).

{\bf The subcase $n+\widetilde{\rho} \geq 0$.} The same argument starting with $\F_{k,\e}$ shows that for a general set of $\alpha_j$ points on each $\mathfrak{c}_j$, the family of curves in $\F_{k,\e}$ passing through these points has dimension 
\[
\dim \F_{k,\e}- \sum \alpha_j\stackrel{\eqref{eq:dimFk}}{=} \widetilde{\rho}+g+n-\sum \alpha_j \stackrel{\eqref{eq:genere}}{=} \widetilde{\rho}+n \geq 0.
\]
Hence, as above, we get a family of curves in $V^k_{n,\e}(\alpha_1,\ldots,\alpha_p)$, having a
$(\widetilde{\rho}+n)$-dimensional family of  
$g^1_k$s with the given ramification order at {\it some} (varying) points. The total family of (unpointed) curves produced is therefore
\[\dim \F_{k,\e}- \left(\widetilde{\rho}+n\right)\stackrel{\eqref{eq:dimFk}}{=} \widetilde{\rho}+g+n-\left(\widetilde{\rho}+n\right)=g,\]
and lies in $V(\alpha_1,\ldots,\alpha_p)$, so it has to cover the whole of it. This finishes the proof of case (ii-a).

{\bf The subcase $n+\widetilde{\rho} < 0$.} The same argument starting with $\F_{k,\e}$ shows that we get a family of curves in $V^k_{n,\e}(\alpha_1,\ldots,\alpha_p)$, having finitely many
$g^1_k$s with the given ramification order at {\it some} (varying) points. Thus, the forgetful map $V^k_{n,\e}(\alpha_1,\ldots,\alpha_p) \to V(\alpha_1,\ldots,\alpha_p)$
is generically finite. Its image lies in $V^k(\alpha_1,\ldots,\alpha_p)$.
\end{proof}

\begin{proof}[Proof of Theorem \ref{thm:main}]
  As mentioned above, we may assume $p \geq 3$. First of all we note that since $V^k_{|H|,\delta}=\emptyset$ if \eqref{eq:boundintro} is not satisfied, we also have that $V^k_{|H|,\delta,\e}=\emptyset$ if \eqref{eq:boundintro} is not satisfied.

  Now assume that the
 condition \eqref{eq:boundintro} holds. 
 Proposition \ref{prop:bordo} implies that \linebreak  $V^k_{n,\e}(\alpha_1,\ldots,\alpha_p) \subset V^k_{\delta,\e} \cap \pi_{\delta,n}^{-1}(0)$ has a component of the expected dimension, for some $\alpha_i$ fulfilling \eqref{eq:condex2} and \eqref{eq:condex3}. Such $\alpha_i$s exist whenever \eqref{eq:boundintro} is satisfied, as shown in \cite{CK}. The theorem now follows by semicontinuity and Proposition \ref{prop:bordo}. To be more precise, one uses Lemma \ref{lemma:NN} to show non-neutrality of the nodes.
Moreover, to prove the assertion about simple ramification, note that the limit ramification points of a general pencil are 
 the ones of  $\mathfrak g$ on the sectional component $\Gamma$ plus the ones tending to the nodes of $\overline C$.  The former ramification is simple by Proposition \ref{prop:bordo} and the latter is simple because, in the admissible cover setting,  each node is replaced
by a $\PP^ 1$ joining the two branches and mapping $2:1$ to a $\PP^ 1$, hence the ramification is simple there. 
\end{proof}

\section{Proofs of Theorem \ref{thm:main_cycle} and of Corollaries \ref{cor:BV} and \ref{cor:corBV}}

  \label{sec:Zk}

Let $(S,H)$ be a primitively polarized $K3$ surface of genus $p$ and $0 \leq \delta<p$.  Let $CH_0(S)$ be the Chow group of $0$-cycles on $S$. 
  We note that
  \begin{eqnarray}
  \label{eq:I} & \mbox{for any $(p,q)\in Z_{k,\delta}(S,H)$, the points $p$ and $q$ are rationally equivalent on $S$;} & \\
   \label{eq:II} &\mbox{the dimension of every component of $Z_{k,\delta}(S,H)$ is at most two.} & 
    \end{eqnarray}
    Indeed, for any $(p,q) \in Z_{k,\delta}^{\circ}(S,H)$ the image of $k[p-q]=0\in \Jac(C^{\nu})$ via the map $\Jac(C^{\nu}) \to CH_0(S)$ must be 0. As $CH_0(S)$ is torsion free, we conclude that \linebreak $[p-q]=0\in CH_0(S)$. By specialization, it also holds for all $(p,q) \in Z_{k,\delta}(S,H)$, proving \eqref{eq:I}. Consequently, $Z_{k,\delta}(S,H)$ is a subvariety of the fiber over $0$ of the difference map $S \times S\to CH_0(S)$. Now \eqref{eq:II} follows from Mumford's proof of the bound on the fiber dimension \cite{Mum}, which says that the fiber is a countable union of varieties of dimension at most two.

    \begin{lemma}\label{lem:dimgeq2} All components of $Z'_{k,\delta}(S,H)=V^k_{|H|,\delta,k,k}$ have dimension at least $2$. 
    \end{lemma}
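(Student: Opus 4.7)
The plan is to realize $V^k_{|H|,\delta,k,k} = Z'^{\circ}_{k,\delta}(S,H)$ as the preimage of the zero section of a relative Jacobian under a natural morphism, and then to apply the standard codimension bound for preimages of regularly embedded closed subschemes.

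First, I would construct the relative Jacobian. Since $V_{|H|,\delta}$ parametrizes curves with only $\delta$ nodes as singularities, their normalizations $C^{\nu}$ fit into a smooth family $\mathcal{C} \to V_{|H|,\delta}$ of smooth projective genus $g$ curves (working componentwise on $V_{|H|,\delta}$ if necessary, since irreducibility is only known for $g \geq 4$). The associated relative Jacobian $\pi_{\mathcal{J}}: \mathcal{J} \to V_{|H|,\delta}$ is then an abelian scheme of relative dimension $g$, so that $\dim \mathcal{J} = 2g$ and the zero section $s_0: V_{|H|,\delta} \hookrightarrow \mathcal{J}$, being a section of a smooth morphism of relative dimension $g$, is a regular embedding of codimension $g$.

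Next, I would introduce the ``$k$-times difference'' morphism
\[
\phi: V_{|H|,\delta,2} \longrightarrow \mathcal{J}, \quad (C,x_1,x_2) \longmapsto k\cdot \O_{C^{\nu}}(x_1 - x_2),
\]
viewing $x_1,x_2$ as distinct smooth points of $C^{\nu}$ via $\nu$. Directly from Definition \ref{def:Zn} and \eqref{eq:uguali}, the scheme-theoretic preimage $\phi^{-1}(s_0(V_{|H|,\delta}))$ coincides with $Z'^{\circ}_{k,\delta}(S,H) = V^k_{|H|,\delta,k,k}$.

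The conclusion will then follow from the standard fact that, given a regularly embedded closed subscheme of codimension $c$ and any morphism into its ambient scheme, every irreducible component of the preimage has codimension at most $c$ in the source. Applied here with $c = g$, this forces each component of $\phi^{-1}(s_0(V_{|H|,\delta}))$ to have dimension at least $\dim V_{|H|,\delta,2} - g = (g+2) - g = 2$, and this bound persists under taking closures, yielding the lemma. I do not foresee any serious obstacle; the only mild technicality is the possible reducibility of $V_{|H|,\delta}$ in small genus, but this is harmless since the Jacobian construction is local on the base and the codimension bound can be applied component by component.
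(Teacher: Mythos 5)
Your argument is correct, but it takes a genuinely different route from the paper's. The paper proves the lemma on the moduli side: for $\e=(k,k)$ the forgetful map $\kappa_{g,k,\e}$ is bijective (the fiber over $(C,p,q)$ is the unique pencil spanned by $kp$ and $kq$), so the Hurwitz-theoretic dimension count \eqref{eq:dimG1k} shows that $\M_{g,k,(k,k)}$ is equidimensional of codimension $-\widetilde{\rho}(g,1,k;k,k)=g$ in $\M_{g,2}$, and the bound is then transported to $V^k_{|H|,\delta,k,k}$ through the moduli map \eqref{eq:modgn}. You instead stay on the $K3$ side and realize $Z'^{\circ}_{k,\delta}(S,H)$ as the preimage of the zero section of the relative Jacobian of the family of normalizations under the map $(C,x_1,x_2)\mapsto k\cdot\O_{C^{\nu}}(x_1-x_2)$; since the zero section is a regular embedding of codimension $g$, Krull's height theorem gives that every component of the preimage has codimension at most $g$ in the $(g+2)$-dimensional scheme $V_{|H|,\delta,2}$. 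Your version has the merit of making the ``codimension of the preimage is at most $g$'' step completely explicit (it is exactly the local-complete-intersection property of the zero section that licenses it, a point the paper's one-line ``via the moduli map we conclude'' leaves implicit), and it does not invoke Riemann's Existence Theorem; what it uses instead is the existence of the relative Jacobian and of the Abel--Jacobi-type morphism over $V_{|H|,\delta,2}$, which is standard for the smooth family of normalizations over the (equidimensional, $g$-dimensional) Severi variety. The only points worth stating for completeness are that every component of $V_{|H|,\delta}$ has dimension exactly $g$ for general $(S,H)$ (so that codimension $\leq g$ really yields dimension $\geq 2$ on each component), and that the universal family of normalizations and its $\Pic^0$ exist at least after an \'etale cover, which is harmless for the dimension count — both of which you essentially acknowledge.
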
 
\begin{proof} Set $\mathbf{e}=(k,k)$. The forgetful morphism $\kappa_{g,k, \mathbf{e}}$ from \eqref{eq:kappa} is bijective since the fiber over any $(C,p,q)$ is the unique $g^1_k$ generated by $kp$ and $kq$. It follows by  \eqref{eq:dimG1k} that 
  $\M_{g,k,\e}$ is equidimensional of codimension $-\widetilde{\rho}(g,1,k;k,k)=g$ in $\M_{g,2}$. Via the moduli map \eqref{eq:modgn} we conclude that all components of $V^k_{|H|,\delta, \mathbf{e}}$ have codimension at most $g$ in $V^k_{|H|,\delta,2}$. 
\end{proof}

\begin{lemma}\label{lem:finite} The forgetful map $Z'^{\circ}_{k,\delta}(S,H) \to Z^{\circ}_{k,\delta}(S,H)$ is finite.
\end{lemma}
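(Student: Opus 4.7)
Approach: I argue by contradiction. Suppose the fiber of the forgetful map over some $(p_0,q_0)\in Z^\circ_{k,\delta}(S,H)$ has positive dimension, and pick an irreducible curve $T$ inside it; normalizing, I obtain a smooth curve parametrizing a non-trivial $1$-parameter family $\{C_t\}_{t\in T}$ of curves in $V_{|H|,\delta}$, all passing through $p_0, q_0$, with $k[p_0-q_0]=0$ in $\Jac(C_t^\nu)$ for every $t$.

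The key step is to globalize the pencils $|kp_0|=|kq_0|$ on the $C_t^\nu$ to a rational function on $S$. Form the relative normalization $\pi^\nu: \C^\nu \to T$ with its two constant sections $\widetilde\sigma_{p_0}, \widetilde\sigma_{q_0}: T \to \C^\nu$ induced by $p_0,q_0$. Since $h^0(C_t^\nu, k\widetilde\sigma_{p_0}(t)) \geq 2$ for every $t$, on a dense open subset of $T$ we obtain a rank-$2$ subsheaf of $\pi^\nu_* \O(k\widetilde\sigma_{p_0})$ defining a relative $g^1_k$, and hence a rational morphism $\phi: \C^\nu \dashrightarrow T \times \PP^1$ over $T$ sending $\widetilde\sigma_{p_0}$ (resp.\ $\widetilde\sigma_{q_0}$) to $T \times \{0\}$ (resp.\ $T \times \{\infty\}$) after composing with an automorphism of $\PP^1$. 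Since the family is non-trivial, the union $\bigcup_{t\in T} C_t \subseteq S$ is a $2$-dimensional irreducible closed subvariety and hence equals $S$; the induced morphism $\C^\nu \to S$ is thus generically of degree one, and $\phi$ descends to a nonconstant rational function $\psi: S \dashrightarrow \PP^1$ whose restriction to each $C_t$ is the prescribed $g^1_k$.

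Because $(S,H)$ is general in moduli, $\Pic(S) = \ZZ H$, so $\psi$ is given by a pencil contained in some $|mH|$, $m \geq 1$, and the degree identity $\deg(\psi|_{C_t}) = k$ forces $m(2p-2) = k$. Let $F_0, F_\infty \in |mH|$ denote the divisorial fibers of $\psi$ over $0$ and $\infty$; by the total ramification at $p_0, q_0$ we have the equalities of $0$-cycles $F_0 \cap C_t = k p_0$ and $F_\infty \cap C_t = k q_0$ on $C_t$ for every $t \in T$. Setting $d = \mult_{p_0} F_0$ and $d' = \mult_{q_0} F_\infty$, a local analytic expansion at $p_0$ shows that the equality $(F_0 \cdot C_t)_{p_0} = k$ forces the curves $C_t$ to share a common jet of order $k-d$ at $p_0$, imposing $k - d + 1$ independent conditions on $C_t \in V_{|H|,\delta}$; arguing symmetrically at $q_0$ yields
\[
1 \leq \dim T \leq g - (2k - d - d' + 2), \qquad \text{hence} \qquad d + d' \geq 2k - g + 3.
\]
On the other hand, the existence of $F_0, F_\infty \in |mH|$ with such multiplicities requires
\[
\binom{d+1}{2} + \binom{d'+1}{2} \leq 2\bigl(h^0(\O_S(mH)) - 1\bigr) = 2m^2(p-1) + 2.
\]
Substituting $k = m(2p-2)$ and $g \leq p$ and using the convexity of $x \mapsto \binom{x+1}{2}$, a short numerical check shows that these two bounds are incompatible for all $p \geq 3$ and $m \geq 1$, yielding the desired contradiction.

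The hard part of this plan is the local jet analysis supporting the bound $d + d' \geq 2k - g + 3$: one must verify that the successive higher-order tangency conditions of $C_t$ to $F_0$ at $p_0$ impose independent linear conditions on $C_t \in V_{|H|,\delta}$ (particularly delicate when $d > 1$ and the tangent cone of $F_0$ is examined iteratively), and to justify the multiplicity bound on subsystems of $|mH|$ at arbitrary points of $S$, not only general ones.
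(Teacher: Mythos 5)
There is a genuine gap at the pivotal step of your argument: the claim that $\C^\nu \to S$ is ``generically of degree one'' and that $\phi$ therefore descends to a rational function $\psi: S \dashrightarrow \PP^1$. Neither assertion is justified. A one-dimensional family of curves in $|H|$ whose union covers $S$ has, through a general point of $S$, as many members as the degree of the parameter curve inside $|H|\cong\PP^p$; this is $1$ only when the family is a linear pencil, which you have not shown. More seriously, even granting birationality, the conclusion you reach --- a pencil on $S$ restricting to the $g^1_k$ on each $C_t^\nu$ --- is exactly the phenomenon that cannot happen and that the whole paper is built around: since $\Pic(S)=\ZZ H$, any pencil on $S$ restricts to $C_t$ with degree a positive multiple of $H^2=2p-2$, whereas $k$ is typically far smaller (e.g.\ $k$ can be as small as roughly $p/2+1$ when $\delta=0$). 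The fact that your own identity $m(2p-2)=k$ is already absurd in that range, before any jet analysis, is a sign that the object $\psi$ you are manipulating does not exist; the $g^1_k$'s live only on the normalizations $C_t^\nu$ and do not globalize over $S$. The subsequent multiplicity and jet computation is therefore built on sand, and you yourself flag that its key independence statements are unverified.

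The paper's proof avoids all of this by transferring the problem to $\Sym^k(S)$: a $g^1_k$ on $C_t^\nu$ totally ramified at $p$ and $q$ is a rational curve in $\Sym^k(S)$ passing through the two fixed points $kp$ and $kq$. A positive-dimensional fiber of the forgetful map would give a one-dimensional family of such rational curves through two fixed points, and bend-and-break forces a reducible (or non-reduced) degeneration; pulling back through the incidence variety produces a reducible curve in the closure of the family $\{C_t\}\subset|H|$. This contradicts the fact that for general $(S,H)$ every member of the primitive linear system $|H|$ is irreducible. If you want to salvage your approach you would need to replace the descent step by an argument of this degeneration type; as written, the proof does not go through.
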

\begin{proof} Assume the map is not finite. Then there would exist $p,q\in S$ and a one dimensional family of curves $\{(C_t,p,q)\}\in V^k_{|H|,\delta, \mathbf{e}}$ with total ramification at the same points $p,q$. Via the incidence variety in $Z\times \Sym^k(S)$, this yields a one dimensional family of rational curves in $\Sym^k(S)$ all containing the points $kp$ and $kq$. By bend-and-break the closure of this family contains a reducible member. Pulling back via the incidence variety, this produces a reducible curve in the limit of the family $\{C_t\}$ on $S$ (see e.g. the proofs of \cite[Lem. 2.1 and Prop. 4.3]{FKP}). This is not possible for a general $(S,H)\in \K_p$.
\end{proof}

\begin{proof}[Proof of Theorem \ref{thm:main_cycle}]
We note that the condition $\widetilde{\rho} \geq -g $  is automatically satisfied when $n=2$ and $e_1=e_2=k$, whence Theorem \ref{thm:main}  shows that $V^k_{|H|,\delta,k,k}=Z'_{k,\delta}(S,H)$ is nonempty  if and only if \eqref{eq:boundintro}  is satisfied. Lemmas \ref{lem:dimgeq2} and \ref{lem:finite} together with \eqref{eq:II} show that both $Z'_{k,\delta}(S,H)$ and $Z_{k,\delta}(S,H)$ have dimension $2$ when nonempty. 
\end{proof}

\begin{proof}[Proof of Corollary \ref{cor:BV}]
  Since $Z_{k,\delta}(S,H)$ is equidimentional of dimension $2$ in $S\times S$, on each of its components either one of the projections is dominant or both projections map to curves. 
  Hence, the assumptions of \cite[Lem. 2.1]{Tor23} are satisfied and therefore $Z_{k,\delta}(S,H)_*$ admits a lifting to $Z_1(S)$. One can proceed analogously to check that the same holds for $Z_0(S)$. 
The properties (i) and (ii) in Definition \ref{def:pres} are now direct consequences of
 \eqref{eq:I}.

We finally prove the last assertion. If one of the projections to $S$ is dominant, we conclude by \eqref{eq:I}. Otherwise, both projections map to curves, say to $C_1$ and $C_2$, respectively. For any $x \in C_1$, the fibre  $p_1^{-1}(x)$ is a curve, mapping birationally to $C_2$ via $p_2$. By \eqref{eq:I}, any point $y \in C_2$ is rationally equivalent to $x$. As $C_2$ intersects any rational curve in $|H|$, all points $y$ have class $c_S$, whence also $x$ has class $c_S$. 
\end{proof}

    \begin{proof}[Proof of Corollary \ref{cor:corBV}]
        As  every component of $Z_{k,0}(S,H)$  contains a dense set of points $(p,q)$ defining the class $(c_S,c_S)$ by Corollary \ref{cor:BV}, the same holds for the marked points of $(C,p,q)$ in every component of ${Z'}_{k,0}(S,H)$. As $2\leq g$, by \cite[Thm. 1.5]{PS19} this constructs a dense set of tautological points $(C,p,q)$ in the image of $m_{g,2}$ as wanted.
    \end{proof}

Corollary \ref{cor:corBV} constructs subvarieties of dimension $\leq 2$ of $\mathcal{M}_{g,2}$ containing a dense set of tautological points. One can find higher dimensional cycles by considering the relative cycles obtained by moving the polarized K3 surface $(S,H)$ in $\mathcal{K}_p$ and considering the universal pointed Severi variety ${\mathcal{V}_{p,\delta,n}} \to \K_p$. Let $M_{p,\delta,n}:\mathcal{V}_{p,\delta,n}\to \mathcal{M}_{g,n}$ be the map fiberwise defined as $m_{g,n}$ (cf. \eqref{eq:modgn}). Let $\mathcal{Z}'_{k,\delta}\subset{\mathcal{V}_{p,\delta,n}}$ be the cycle defined fiberwise as  ${Z'}_{k,\delta}(S,H)$.  From Corollary \ref{cor:corBV}  we conclude  that  
the set of tautological points in $M_{p,0,2}(\mathcal{Z}'_{k,0})$ is dense. 
   Moreover, we remark that this   cycle has dimension exactly $21$ for  $p=g=11$  or $g \geq 13$. To explain this latter fact, denote by $\mathcal{V}_{p,\delta} \to \K_p$ the universal (unpointed) Severi variety, with fiber over $(S,H)$ being $V_{|H|,\delta}$, and by ${M}_{p,\delta}:\mathcal{V}_{p,\delta}\to \mathcal{M}_{g}$  the map sending an element $(S,H,C) \in \mathcal{V}_{p,\delta}$ to the class of $C^{\nu}$. Then we have a commutative diagram
  \[ \xymatrix{ \mathcal{V}_{p,\delta,n} \ar[r]^{M_{p,\delta,n}} \ar[d] & \mathcal{M}_{g,n} \ar[d] \\
      \mathcal{V}_{p,\delta} \ar[r]^{M_{p,\delta}} & \mathcal{M}_{g,n},}
    \]
with the vertical maps being the finite maps forgetting the points. 
When $\delta=0$, the kernel of the differential of $M_{p,0}$ at a point $(S,H,C)$ is $H^1(\T_S(-C))=H^1(\T_S(-H))$ (cf. \cite{Beau}), which is independent of the curve $C \in V_{|H|,0}$  for fixed $(S,H) \in \K_p$. As ${M}_{p,0}$ is known to be generically finite for $p=11$ and $p \geq 13$, it follows that $H^1(\T_S(-H))=0$ for general $(S,H)$. Thus, the differential of $M_{p,\delta}$ is injective at any point of $(S,C) \in V_{|H|,0}$ for general $(S,H)$, whence $M_{p,\delta}$ is finite at these points. In particular, the map $M_{p,0,2}$ restricted to $\mathcal{Z}'_{k,0}$ is generically finite.


\begin{thebibliography}{12}


\bibitem{ACV} D.~Abramovich, A.~Corti, A.~Vistoli, {\it Twisted bundles and admissible covers}, Comm. Algebra {\bf 31}
(2003), 3547--3618.

  

\bibitem{Beau} A.~Beauville, {\it Fano Threefolds and $K3$ Surfaces.} The Fano Conference. Univ. Torino,
Turin, 2004, 175--184.




\bibitem{BV04}
	A.~Beauville, C.~Voisin,
	{\it On the Chow ring of a K3 surface,} J. Alg. Geom. {\bf 13} (2004), 417--426.   


      \bibitem{BLC} A.~Bruno, M.~Lelli-Chiesa, {\it Irreducibility of Severi varieties on $K3$ surfaces}, 
arXiv:2112.09398. 

\bibitem{CMR} R.~Cavalieri, H.~Markwig, D.~Ranganathan, {\it Tropicalizing the space of admissible covers}, Math. Ann. {\bf 364}, 1275--1313 (2016). 

\bibitem{CPS} 
A.~Cela, R.~Pandharipande, J.~Schmitt, {\it Tevelev degrees and Hurwitz moduli spaces},
Math. Proc. Cambridge Philos. Soc. {\bf 173} (2022), 479--510.



\bibitem{COP} M.~Chan, B.~Osserman, N.~Pflueger, 
{\it The Gieseker-Petri theorem and imposed ramification},
Bull. Lond. Math. Soc. {\bf 51} (2019), 945--960.



\bibitem{CMPT} M.~Chan, A.~L.~Martin, N.~Pflueger, M.~Teixidor i Bigas, {\it Genera of Brill-Noether curves and staircase paths in Young tableaux}, Trans. Amer. Math. Soc. {\bf 370} (2018), 3405--3439.


  

\bibitem{CG22} X.~ Chen, F.~Gounelas, {\it Curves of maximal moduli on K3 surfaces}, Forum of Mathematics, Sigma (2022);10:e36. doi:10.1017/fms.2022.24

 \bibitem{CFGK2} C. Ciliberto, F. Flamini, C. Galati, A. L. Knutsen, {\it Degeneration of differentials and moduli
of nodal curves on K3 surfaces}, Contemporary Mathematics {\bf 712}, 2018, 59--79.


\bibitem{CFGK} C. Ciliberto, F. Flamini, C. Galati, A. L. Knutsen, {\it Moduli of nodal curves on K3 surfaces},
Adv. Math. {\bf 309} (2017), 624--654.




 \bibitem{CK}   C.~Ciliberto, A.~L.~Knutsen, {\it On $k$-gonal loci in Severi varieties on general $K3$ surfaces and rational curves on hyperk\"ahler manifolds},
   J. Math. Pures Appl.  {\bf 101} (2014), 473--494. Corrigendum J. Math. Pures Appl. (2017) {\bf 107}, 665--666.

  

\bibitem{EH}  D.~Eisenbud, J.~Harris, {\it Limit linear series: Basic theory}, 
  Invent. math. {\bf 85} (1986), 337--371.

  
 \bibitem{EH2} D.~Eisenbud, J.~Harris, {\it Divisors on general curves and cuspidal rational curves}, Invent. math. {\bf 74} (1983), 371--418.
  

  

\bibitem{FP} C.~Faber, R.~Pandharipande, {\it Relative maps and tautological classes}, J. Eur. Math. Soc. {\bf 7} (2005), 13--49.

  \bibitem{Fa} G.~Farkas, {\it Brill-Noether with ramification at unassigned points}, J. Pure Appl. Algebra {\bf 217} (2013), 1838--1843. 


\bibitem{FL} G.~Farkas, C.~Lian, {\it Linear series on general curves with prescribed incidence  conditions}, J. Inst. Math. Jussieu {\bf 22} (2023), 2857--2877.

  
\bibitem{FT1} G.~Farkas, N.~Tarasca, {\it Du Val curves and the pointed Brill-Noether theorem},
Selecta Math. (N.S.) {\bf 23} (2017), 2243--2259.

\bibitem{FT2} G.~Farkas, N.~Tarasca, {\it Pointed Castelnuovo numbers},
Math. Res. Lett. {\bf 23} (2016), 389--404.



  
  




  
\bibitem{FKP}  F.~Flamini, A.~L.~Knutsen, G.~ Pacienza, {\it On Families of Rational Curves
  in the Hilbert Square of a Surface (with an appendix by E.~Sernesi)}, Michigan Math. J. {\bf 58} (2009), 639--682.

\bibitem{FKPS}  F. Flamini, A.L. Knutsen, G. Pacienza, E. Sernesi, {\it Nodal curves with general moduli on $K3$ surfaces},
  Comm. Algebra {\bf 36} (2008) 3955--3971.

  






\bibitem{GV01} T.~Graber and R.~Vakil, 
{\it On the tautological ring of $\mathcal{M}_{g,n}$,} Turkish J. Math. {\bf 25} (2001), 237--243.

 \bibitem{HM} J.~Harris, I.~Morrison, {\it Moduli of Curves},  Graduate Texts in Mathematics, volume {\bf 187},  Springer-Verlag New York (1998). 

\bibitem{HMu} J.~Harris, D.~Mumford, {\it On the Kodaira dimension of the moduli space of curves}, Invent. Math. {\bf 67}
(1982), 23--86.

\bibitem{Huy14} D.~Huybrechts, {\it Curves and cycles on K3 surfaces}, Algebr. Geom. {\bf 1}, (2014), 69--106.




\bibitem{JPPZ17} F.~Janda, R.~Pandharipande, A.~Pixton and D.~Zvonkine, {\it Double ramification cycles on the moduli space of curves}, Publications de l'IHES {\bf 125} (2017), 221--266.

\bibitem{KLM} A.~L.~Knutsen, M.~Lelli-Chiesa, G.~Mongardi, {\it Severi varieties and Brill-Noether theory of curves on abelian surfaces}, J. Reine Angew. Math. {\bf 749} (2019), 161--200.

   

\bibitem{Laz} R. Lazarsfeld, {\it Brill-Noether-Petri without degenerations},
  J. Differential Geom. {\bf 23} (1986),  299--307.

 \bibitem{M04} C. Maclean, {\it Chow groups of surfaces with $h^{2,0}\leq 1$}, C. R. Math. Acad. Sci. Paris {\bf 338} (2004), 55--58.

\bibitem{Mi} R.~Miranda, {\it Algebraic Curves and Riemann Surfaces}, 
 Graduate Studies in Mathematics, Volume {\bf 5},  Amerucan Mathematical Society 1995. 
  

\bibitem{Mo} S.~Mochizuki, {\it The geometry of the compactification of the Hurwitz scheme}, PhD thesis, Princeton University,
1992.

\bibitem{MM} S. Mori, S. Mukai, {\it The uniruledness of the moduli space of curves of genus 11}, Algebraic Geometry, Proc. Tokyo/Kyoto, Lecture Notes in Math. {\bf 1016}, Springer, Berlin, 1983,  334--353.
 


\bibitem{Mum} D.~Mumford, {\it Rational equivalence of 0-cycles on surfaces}, J. Math. Kyoto Univ. {\bf 9} (1968), 195--204.




\bibitem{Os2} B.~Osserman, {\it Linear series and the existence of branched covers},
Compos. Math. {\bf 144} (2008) 89--106.


\bibitem{Os} B.~Osserman, {\it The number of linear series on curves with given ramification}, 
Int. Math. Res. Not. (2003), no. 47, 2513--2527.
  



\bibitem{PS19} R.~Pandharipande and J.~Schmitt,
	{\it Zero cycles on the moduli space of curves}, {\'E}pijournal de G{\'e}om{\'e}trie Alg{\'e}brique {\bf 4} (2020), art. 12.





        
        

\bibitem{Tev} J.~Tevelev, {\it Scattering amplitudes of stable curves}, Geometry $\&$ Topology 29:6 (2025), 3063–3128.
        
\bibitem{Te23} M.~Teixidor i Bigas, {\it Brill-Noether loci with ramification at two points}, 
Ann. Mat. Pura Appl. {\bf 202} (2023), 1217--1232.


\bibitem{Tor23} S.~Torelli, 
      {\it Correspondences acting on constant cycle curves on K3 surfaces}, Math. Z. {\bf 312}, 17 (2026).


\bibitem{Voi15} C.~Voisin, {\it Rational equivalence of 0-cycles on K3 surfaces and conjectures of Huybrechts and O’Grady}, London Math. Soc. Lecture Note Ser. {\bf 417}. Cambridge University Press, 2015, 422--436.


\bibitem{Zar} O.~Zariski, {\it On the moduli of algebraic functions possesing a given monodromy group}, Collected
Papers, Vol III 

 
\end{thebibliography}
  \end{document}